\newcommand{\blabel}[1]{\setlength\fboxrule{.0em}\fbox{\ensuremath{\scriptstyle #1}}}
\newenvironment{tabsection}{}{}
\newcommand{\op}[1]{\ensuremath{\operatorname{#1}}}
\newcommand{\wt}[1]{\ensuremath{\widetilde{#1}}}
\newcommand{\wh}[1]{\ensuremath{\widehat{#1}}}
\newcommand{\ol}[1]{\ensuremath{\overline{#1}}}
\newcommand{\ul}[1]{\ensuremath{\underline{#1}}}
\newcommand{\cC}{\ensuremath{\mathcal{C}}}
\newcommand{\cD}{\ensuremath{\mathcal{D}}}
\newcommand{\cF}{\ensuremath{\mathcal{F}}}
\newcommand{\cG}{\ensuremath{\mathcal{G}}}
\newcommand{\cH}{\ensuremath{\mathcal{H}}}
\newcommand{\cL}{\ensuremath{\mathcal{L}}}
\newcommand{\cM}{\ensuremath{\mathcal{M}}}
\newcommand{\cP}{\ensuremath{\mathcal{P}}}
\newcommand{\cT}{\ensuremath{\mathcal{T}}}
\newcommand{\cU}{\ensuremath{\mathcal{U}}}
\newcommand{\cV}{\ensuremath{\mathcal{V}}}
\newcommand{\cW}{\ensuremath{\mathcal{W}}}
\newcommand{\cZ}{\ensuremath{\mathcal{Z}}}
\newcommand{\mc}[1]{\ensuremath{\mathcal{#1}}}
\newcommand{\fz}{\ensuremath{\mathfrak{z}}}
\newcommand{\fk}{\ensuremath{\mathfrak{k}}}
\newcommand{\fg}{\ensuremath{\mathfrak{g}}}
\newcommand{\dd}{\op{\tt{d}}}
 \newcommand{\C}{\ensuremath{\mathbb{C}}}
 \newcommand{\R}{\ensuremath{\mathbb{R}}}
 \newcommand{\N}{\ensuremath{\mathbb{N}}}
 \newcommand{\Z}{\ensuremath{\mathbb{Z}}}
\newcommand{\id}{\ensuremath{\operatorname{id}}}
\newcommand{\pr}{\ensuremath{\operatorname{pr}}}
\newcommand{\ad}{\ensuremath{\operatorname{ad}}}
\newcommand{\Obj}{\ensuremath{\operatorname{Obj}}}
\newcommand{\Mor}{\ensuremath{\operatorname{Mor}}}
\newcommand{\Aut}{\ensuremath{\operatorname{Aut}}}
\newcommand{\Hom}{\ensuremath{\operatorname{Hom}}}
\newcommand{\im}{\ensuremath{\operatorname{im}}}
\newcommand{\tx}[1]{\ensuremath{\text{#1}}}
\newcommand{\SU}{\ensuremath{\operatorname{SU}}}
\newcommand{\End}{\ensuremath{\operatorname{End}}}
\newcommand{\su}{\ensuremath{\operatorname{\mathfrak{su}}}}
\newcommand{\per}{\ensuremath{\operatorname{per}}}
\newcommand{\se}{\ensuremath{\nobreak\subseteq\nobreak}}
\newcommand{\from}{\ensuremath{\nobreak\colon\nobreak}}
\renewcommand{\to}{\ensuremath{\nobreak\rightarrow\nobreak}}
\newcommand{\sprod}{\ensuremath{\mathopen{\langle}\mathinner{\cdot},\mathinner{\cdot}\mathclose{\rangle}}}
\newcommand{\cat}[1]{\ensuremath{\boldsymbol{\operatorname{#1}}}}
\newcommand{\cq}[1]{\ensuremath{q}}
\newcommand{\nelt}{\ensuremath{\mathds{1}}}
\newtheorem{definition}{Definition}[section]
\newtheorem{remark}[definition]{Remark}
\newtheorem{example}[definition]{Example}
\newtheorem*{proof}{Proof}
\newtheorem{Theorem}[definition]{Theorem}
\newtheorem*{nntheorem}{Theorem}
\newtheorem{lemma}[definition]{Lemma}
\newtheorem{proposition}[definition]{Proposition}
\newtheorem{theorem}[definition]{Theorem}
\newtheorem{corollary}[definition]{Corollary}
\def\dotuline{\bgroup 
  \ifdim\ULdepth=\maxdimen  
   \settodepth\ULdepth{(j}\advance\ULdepth.4pt\fi
  \markoverwith{\begingroup
  \advance\ULdepth0.08ex
  \lower\ULdepth\hbox{\kern.075em .\kern.05em}%
  \endgroup}\ULon}
\def\wdotuline{\bgroup 
  \ifdim\ULdepth=\maxdimen  
   \settodepth\ULdepth{(j}\advance\ULdepth.4pt\fi
  \markoverwith{\begingroup
  \advance\ULdepth0.08ex
  \lower\ULdepth\hbox{\kern.3em .\kern.2em}%
  \endgroup}\ULon}
\def\dashuline{\bgroup 
  \ifdim\ULdepth=\maxdimen  
   \settodepth\ULdepth{(j}\advance\ULdepth.4pt\fi
  \markoverwith{\kern.1em
  \vtop{\kern\ULdepth \hrule width .2em}%
  \kern.1em}\ULon}
\def\wdashuline{\bgroup 
  \ifdim\ULdepth=\maxdimen  
   \settodepth\ULdepth{(j}\advance\ULdepth.4pt\fi
  \markoverwith{\kern.2em
  \vtop{\kern\ULdepth \hrule width .6em}%
  \kern.2em}\ULon}
\begin{document}

\normalem


\title{Categorified central extensions, \'etale Lie 2-groups
and Lie's Third Theorem for
locally exponential Lie algebras} 
\author{Christoph Wockel\footnote{Fachbereich Mathematik,
Bundesstr.\ 55, D-20146 Hamburg, Germany}\\[.5em]\texttt{\small
christoph@wockel.eu}}\date{} 
\maketitle

\begin{abstract}
 Lie's Third Theorem, asserting that each finite-dimensional Lie algebra
 is the Lie algebra of a Lie group, fails in infinite dimensions. The
 modern account on this phenomenon is the integration problem for
 central extensions of infinite-dimensional Lie algebras, which in turn
 is phrased in terms of an integration procedure for Lie algebra
 cocycles.

 This paper remedies the obstructions for integrating cocycles and
 central extensions from Lie algebras to Lie groups by generalising the
 integrating objects. Those objects obey the maximal coherence that one
 can expect. Moreover, we show that they are the universal ones for the
 integration problem.

 The main application of this result is that a Mackey-complete locally
 exponential Lie algebra (e.g., a Banach--Lie algebra) integrates to a
 Lie 2-group in the sense that there is a natural Lie functor from
 certain Lie 2-groups to Lie algebras, sending the integrating Lie
 2-group to an isomorphic Lie algebra.\\[\baselineskip] \textbf{MSC:}
 22E65, 58H05, 55N20\\[\baselineskip] \textbf{Keywords:}
 infinite-dimensioal Lie 2-group; central extension; cocycle;
 integration of cocycles; Lie's Third Theorem
\end{abstract}

\section*{Introduction}

\begin{tabsection}
 This paper sets out to resolve obstructions for integrating Lie
 algebras and central extensions of them. It is a celebrated theorem
 that each finite-dimensional Lie algebra is the Lie algebra of a Lie
 group, which is known as Lie's Third Theorem. It was proven by Lie in a
 local versions and in full strength by \'Elie Cartan (cf.\
 \cite{Cartan30Le-troisieme-theoreme-fondamental-de-Lie} and references
 therein). It has also been \'Elie Cartan who first remarked in
 \cite{Cartan36La-topologie-des-groupes-de-Lie} that one may also use
 the fact that $\pi_{2}(G)$ vanishes for any finite-dimensional Lie
 group\footnote{originally, Cartan's condition was that the first two
 Betti numbers vanish} to prove Lie's Third Theorem. If $G$ is
 infinite-dimensional, then $\pi_{2}(G)$ does not vanish any more, for
 instance for $C^{\infty}(S^{1},\SU(2))$ or $PU(\mc{H})$. This was used
 by van Est and Korthagen in
 \cite{EstKorthagen64Non-enlargible-Lie-algebras} to construct an
 example of a Banach--Lie algebra which cannot be the Lie algebra of a
 Lie group (cf.\
 \cite{DouadyLazard66Espaces-fibres-en-algebres-de-Lie-et-en-groupes}
 for the corresponding construction for $PU(\mc{H})$).

 However, there is a large class of infinite-dimensional Lie algebras
 which integrate to a \emph{local} Lie group, namely locally exponential
 Lie algebras. In particular, all Banach--Lie algebras belong to this
 class. The non-existence of a (global) Lie group integrating a locally
 exponential Lie algebra may thus be regarded as the obstruction against
 the corresponding local Lie group to enlarge to a (global) Lie group.
 This is why a Lie algebra, which is the Lie algebra of a (global) Lie
 group is often called \emph{enlargeable}, whilst a Lie algebra is
 called \emph{integrable} if it is the Lie algebra of a local Lie group
 (cf.\ \cite{Neeb06Towards-a-Lie-theory-of-locally-convex-groups}).

 The most sophisticated tool for the analysis of enlargeability of
 locally exponential Lie algebras is Neeb's machinery for integrating
 central extensions of infinite-dimensional Lie groups, developed in
 \cite{Neeb02Central-extensions-of-infinite-dimensional-Lie-groups}: if
 $\fz\to \wh{\fg}\to \fg$ is a central extension of Lie algebras and $G$
 is a 1-connected Lie group with Lie algebra $\fg$, then
 $\fz\to \wh{\fg}\to \fg$ integrates to a central extension of $G$ if
 and only if the period group $\per_{\wh{\fg}}(\pi_{2}(G))\se \fz$ is
 discrete. A variant of this theory (cf.\ \cite[Sect.\
 VI.1]{Neeb06Towards-a-Lie-theory-of-locally-convex-groups}) applies in
 particular to a locally exponential Lie algebra $\fg$, since
 $\fz(\fg)\to \fg\to \fg_{\ad}$ is a (generalised) central extension and
 there always exists a 1-connected Lie group $G_{\ad}$ with
 $L(G_{\ad})=\fg_{\ad}$. Thus the obstruction for $\fg$ to be
 non-enlargeable is the non-discreetness of
 $\per_{\fg}(\pi_{2}(G_{\ad}))\se\fz(\fg)$. If $\fg$ is
 finite-dimensional, then $\pi_{2}(G_{\ad})$ vanishes and Lie's Third
 Theorem is immediate. From this point of view the theorem seems to be
 merely a homotopy-theoretic accident.

 Enlarging local groups and integrating central extensions obey a common
 pattern. The obstruction for enlarging a local (Lie) group to a global
 one is an associativity constraint, which is coupled to topological
 properties of the local group (cf.\
 \cite{Smith50Some-Notions-Connected-with-a-Set-of-Generators},
 \cite{Est62Local-and-global-groups.-I},
 \cite{Est62Local-and-global-groups.-II},
 \cite{Olver96Non-associative-local-Lie-groups} and
 \cite{BourginRobart08An-infinite-dimensional-approach-to-the-third-fundamental-theorem-of-Lie}).
 In general, global associativity cannot be achieved as the
 counterexamples, mentioned above, show. In the integration problem for
 cocycles the obstruction for $\per_{\wh{\fg}}(\pi_{2}(G))\se \fz$ to be
 discrete ensures that a cocycle condition holds for a certain universal
 integrating cocycle.\\
 
 The upshot of this paper is that one may \emph{relax} global
 associativity and cocycle conditions at the same time by introducing
 more generalised \emph{but still coherent} objects, like generalised
 group cocycles and 2-groups. It is organised as follows. In the first
 section we line out an integration procedure for Lie algebra cocycles
 to generalised, locally smooth Lie group cocycles. This is the central
 idea of this paper, all other results will build on this. The main
 achievement of this section is the following

 \begin{nntheorem}
  If $\fz$ is Mackey-complete and $\fg$ is a Lie algebra with simply
  connected Lie group $G$, then each continuous Lie algebra cocycle
  $\omega\from\fg\times\fg\to\fz$ integrates to a generalised cocycle on
  $G$. Moreover, the generalised cocycle that we construct is universal
  for generalised cocycles integrating $\omega$.
 \end{nntheorem}
 
 The remaining sections describe interpretations of the results of the
 first section. The second describes an interpretation in the language
 of loop prolongations. It is discussed which aspects cannot be covered
 by loop prolongations, which then leads to an interpretation in the
 language of 2-groups. This is done in sections three and four and the
 corresponding extension theory is introduced in section five. It is
 described which r\^ole \'etalness plays in this setting, and this
 section eventually results in the second main result of this paper.

 \begin{nntheorem}
  If $\fg$ is the Lie algebra of the simply connected Lie group $G$,
  then each topologically split central extension $\fz\to\wh{\fg}\to\fg$
  with Mackey-complete $\fz$ integrates to a smooth generalised central
  extension of \'etale Lie 2-groups.
 \end{nntheorem}
 
 After having worked this out we apply the previous results to the
 (generalised) central extension $\fz(\fg)\to \fg\to \fg_{\ad}$ for
 $\fg$ locally exponential in order to obtain our version of Lie's Third
 Theorem in the next section:

 \begin{nntheorem}
  If $\fg$ is a Mackey-complete locally exponential Lie algebra, then
  there exists an \'etale Lie 2-group $\cG$ such that $\cL(\cG)$ is
  isomorphic to $\fg$.
 \end{nntheorem}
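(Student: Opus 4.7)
The plan is to reduce the statement to the preceding integration theorem, applied to the canonical adjoint central extension of $\fg$. Set $\fz:=\fz(\fg)$ and $\fg_{\op{ad}}:=\fg/\fz$. Then
\begin{equation*}
 \fz\hookrightarrow \fg\twoheadrightarrow \fg_{\op{ad}}
\end{equation*}
is a central extension of locally convex Lie algebras, and $\fg_{\op{ad}}$ is again locally exponential (local exponentiality passes to quotients by closed ideals) and has trivial center by construction.

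The first step is to produce a locally convex Lie group $G_{\op{ad}}$ with $L(G_{\op{ad}})=\fg_{\op{ad}}$. Since $\fg_{\op{ad}}$ is locally exponential with trivial center, the adjoint representation $\ad\from \fg_{\op{ad}}\to \op{der}(\fg_{\op{ad}})$ is injective, and the subgroup generated by the exponentials of inner derivations can be equipped with a compatible locally convex Lie group structure. This is the standard integrability result for adjoint groups of locally exponential Lie algebras (cf.\ \cite{Neeb06Towards-a-Lie-theory-of-locally-convex-groups}). Passing to a simply connected cover if necessary, we may assume $G_{\op{ad}}$ is simply connected.

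The second step is to feed the extension and $G_{\op{ad}}$ into the preceding theorem. This yields a central extension of \'etale Lie 2-groups
\begin{equation*}
 \cZ\to \cG\to G_{\op{ad}}
\end{equation*}
whose associated central extension of Lie algebras, under the Lie functor $\cL$, is precisely $\fz\hookrightarrow\fg\twoheadrightarrow\fg_{\op{ad}}$. In particular, $\cL(\cG)\cong \fg$, which is the desired statement.

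The only non-routine ingredient is the first step, the existence of $G_{\op{ad}}$: this is where the hypothesis of local exponentiality is really used, since the obstructions against enlargeability all live in the kernel of $\ad$ and therefore disappear once we have divided out the center. Once $G_{\op{ad}}$ is at hand, Lie's Third Theorem becomes a direct corollary of the central-extension theorem together with the functoriality of $\cL$ on \'etale Lie 2-groups, exactly as anticipated in the introduction by trading the classical homotopy-theoretic obstruction $\per_{\fg}(\pi_{2}(G_{\op{ad}}))$ for a coherent weakening of associativity in the 2-group $\cG$.
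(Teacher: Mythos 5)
Your proposal is correct and follows essentially the same route as the paper: form the central extension $\fz(\fg)\hookrightarrow\fg\twoheadrightarrow\fg_{\op{ad}}$, integrate $\fg_{\op{ad}}$ to a simply connected locally exponential Lie group via Neeb's theorem on adjoint groups, and apply the integration theorem for central extensions to obtain an \'etale Lie 2-group $\cG$ with $\cL(\cG)\cong\fg$. The extra justification you give for the existence of $G_{\op{ad}}$ is exactly what the paper delegates to the cited result of Neeb.
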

 
 In the end we indicate some directions for further research and give
 some details on locally convex Lie groups in an appendix.

 There exist many links to neighbouring topics, which we will mention
 throughout the text. Amongst those are integrability questions for Lie
 algebroids (Remark \ref{rem:integrating-Lie-algebriods}), String group
 models (Example \ref{ex:string-2-group}), diffeological Lie groups
 (Remark \ref{rem:diffeological-lie-groups}) and connections on
 categorified bundles and $n$-plectic geometry (Remark
 \ref{rem:prospect:differential_geometry_of_generalised_central_extensions}).
 Since many of them need concepts and notation that we provide in the
 text we refrain from summarising them here.
\end{tabsection}


\section*{Acknowledgements}

\begin{tabsection}
 The author enjoyed a scholarship in the Graduiertenkolleg 1493
 ``Mathematische Strukturen in der modernen Quantenphysik'' (G\"ottingen)
 and was supported by the SFB 676 ``Particles, Strings and the Early
 Universe'' (Hamburg) while carrying out the work on this paper.
 He is grateful to Chenchang Zhu for many discussions on the integration
 procedure for Lie algebras and Lie algebroids. He also wishes to thank
 Karl-Hermann Neeb for providing references concerning the classical
 version of Lie's Third Theorem and pointing out Example
 \ref{ex:topologically_non-split_adjoint_algebra}. Thanks go also to
 Sven S. Porst for countless discussions and proof-reading parts of the
 manuscript. Last, but not least, the author is grateful to Urs
 Schreiber for conversations on 2-groups and related topics.

 The author also wants to thank the referee for a thorough job, helping
 in particular to improve the presentation of the paper.
\end{tabsection}

\section*{Conventions}

\begin{tabsection}
 For us a manifold is a Hausdorff space, which is locally homeomorphic
 to open subsets of some locally convex space such that the coordinate
 changes are diffeomorphisms. A Lie group is a group, which also is a
 manifold such that the group operations are smooth (cf.\ Definition
 \ref{def:diffcalcOnLocallyConvexSpaces} for details on this). For $M,N$
 smooth manifolds and $f\from M\to N$ smooth, $Tf\from TM\to TN$ denotes
 the tangent map of $f$. If $M,N$ and $f$ are pointed, then
 $df\from T_{*}M\to T_{*}N$ denotes the differential in the base point.
 Moreover, if $df$ vanishes, then one may define
 $d^{2}f\from T_{*}M\times T_{*}M\to T_{*}N$ in terms of local
 coordinates, where the vanishing of $df$ implies the independence of
 the choice of a chart.
 For us, a locally smooth map on a pointed manifold is a map which is
 smooth on some open neighbourhood of the base-point (and \emph{not} on
 an open neighbourhood of each point).

 Unless stated otherwise, $G$ shall always be a 1-connected Lie group
 with Lie algebra $\fg$, which we usually identify with $T_{e}G$.
 Moreover, $\fz$ shall always denote a Mackey-complete locally convex
 space (in particular, integrals of smooth functions from standard
 simplices to $\fz$ always exist, cf.\ Remark
 \ref{rem:mackey-complete-space}) and $Z$ will denote the abelian Lie
 group $\fz/ \Gamma$ for some discrete subgroup $\Gamma$ (in some
 situations we will choose $\Gamma$ explicitly, but in general any
 discrete subgroup is fine). Unless stated otherwise,
 $\cq{Z}\from \fz\to Z$ will denote the canonical quotient map.

 If $A$ is an abelian Lie group where $G$ acts on (trivially if nothing
 else is said), then we define
 \begin{align*}
  C^{n}(G,A):=\{f\from G^{n}\to A\colon&	f(g_{1},...,g_{n})=0 \tx{ if
  }g_{i}=e\tx{ for some }i\tx{ and }\\ &f\tx{ is smooth on some neighbourhood of }(e,...,e)\in G^{n}\},
 \end{align*}
 the group of normalised locally smooth $A$-valued $n$-cochains on $G$.
 Note that this implies in particular
 \begin{equation*}
 df(e,...,e)(v_{1},...,v_{n})=\sum_{i=1}^{n}df(e,...,e)(0,...,v_{i},...,0)=0.
 \end{equation*}
 On $C^{n}(G,A)$ we denote by
 \begin{multline*}
  \dd_{\op{gp}}\from C^{n}(G,A)\to
  C^{n+1}(G,A),\quad \dd_{\op{gp}}f(g_{0},...g_{n})=\\ g_{0}.f(g_{1},...,g_{n})
  -\sum _{i=0}^{n-1}(-1)^{i}f(g_{0},...,g_{i}g_{i+1},...,g_{n})-
  (-1)^{n}f(g_{0},...,g_{n-1})
 \end{multline*}
 the ordinary group differential (we will also use this formula for
 $\dd_{\op{gp}}f$ in more general situation, where $A$ does not carry a
 Lie group structure and $f$ does not obey any smoothness condition). If
 $f\in C^{2}(G,A)$, $\dd_{\op{gp}}f=0$, and $G$ acts trivially, then
 \begin{equation}\label{eqn:multiplication_on_central_extension}
  (a,g)\cdot
  (b,h)=(a+b+f(g,h),gh)
 \end{equation}
 defines a group structure on $A\times G$, which we denote by
 $A\times _{f}G$.

 We denote by $\Delta^{(n)}\se \R^{n}$ the standard $n$-simplex, which
 we view as a manifold with corners. For a Hausdorff space $X$,
 $C_{n}(X)=\langle C(\Delta^{(n)},X)\rangle_{\Z}$ denotes the group of
 singular $n$-chains in $X$ over $\Z$ and
 $\partial \from C_{n}(X)\to C_{n-1}(X)$ the corresponding singular
 differential. Moreover, $Z_{n}(X)$ and $B_{n}(X)$ denote the
 corresponding cycles and boundaries and $H_{n}(X)$ the singular
 homology of $X$. For $\alpha,\alpha'\in C(\Delta^{(n)},G)$,
 $\alpha+ \alpha'$ and $-\alpha$ always refer to the additive structure
 in $C_{n}(G)$ whilst $\alpha\cdot \alpha'$ and $\alpha^{-1}$ always
 refer to the (point-wise) group structures on the Lie group
 $C(\Delta^{(n)},G)$. Moreover, $G$ acts by left multiplication on
 $C_{n}(G)$ and we take this module structure into account when using
 $\dd_{\op{gp}}$ for $C_{n}(G)$-valued mappings.

 If $\cC$ is a small category, then $\cC_{0}$ and $\cC_{1}$ are the sets
 of objects and morphisms. The structure maps of $\cC$ are always
 denoted by $s,t,\id$ and $\circ$. If $\cF\from \cC \to \cD$ is a
 functor, then $\cF_{0}\from \cC_{0}\to \cD_{0}$ and
 $\cF_{1}\from \cC_{1}\to \cD_{1}$ are the corresponding maps on the set
 of objects and morphisms. Likewise, if
 $\alpha \from \cF\Rightarrow \cF'$ is a natural transformation, then we
 use the same letter to denote the corresponding map
 $\alpha \from \cC_{0}\to \cD_{1}$. The set of isomorphism classes of
 $\cC$ is denoted by $\pi_{0}(\cC)$ and $\pi_{0}(\cF)$ is the induced
 map $\pi_{0}(\cC)\to \pi_{0}(\cD)$. Almost all categories that we will
 encounter in this article will be groupoids, i.e., categories in which
 each morphism is invertible.
\end{tabsection}

\section{Integrating cocycles}\label{sect:integrating_cocycles}

\begin{tabsection}
 This section describes the principal construction of this paper. It is
 an integration procedure for Lie algebra cocycles and generalises the
 approach from
 \cite{Neeb02Central-extensions-of-infinite-dimensional-Lie-groups}. The
 main achievement will be to overcome the obstruction from
 \cite{Neeb02Central-extensions-of-infinite-dimensional-Lie-groups} for
 the aforementioned integration procedure by passing from group cocycles
 with coefficients in an abelian Lie group to group cocycles with
 coefficients in a complex of abelian Lie groups. We first recall the
 setting and the results from
 \cite{Neeb02Central-extensions-of-infinite-dimensional-Lie-groups}.
\end{tabsection}

\begin{definition}
 Let $\fg$ be a topological Lie algebra and $\fz$ be a topological
 vector space. A Lie algebra \emph{cocycle} is a continuous bilinear map
 $\omega \from\fg\times\fg\to\fz$ satisfying
 $\omega (x,y)=-\omega (y,x)$ and
 \[
  \omega ([x,y],z)+ \omega ([y,z],x)+ \omega ([z,x],y)=0.
 \]
 The cocycle $\omega$ is said to be a \emph{coboundary} if there exists
 a continuous linear map $b\from \fg\to\fz$ with
 $\omega (x,y)=b([x,y])$. The vector space of cocycles is denoted by
 $Z^{2}_{c}(\fg,\fz)$ and the space of coboundaries
 $B^{2}_{c}(\fg ,\fz)$ is a sub space of $Z^{2}_{c} (\fg,\fz)$.
 The vector space
 $H^{2}_{c} (\fg,\fz):=Z^{2}_{c}(\fg ,\fz)/B^{2}_{c}(\fg ,\fz)$ is
 called the (second continuous) \emph{Lie algebra cohomology} of $\fg$
 with coefficients in $\fz$. Two cocycles $\omega$ and $\omega '$
 are called \emph{equivalent} if $[\omega]=[\omega ']$ in
 $H^{2}_{c} (\fg,\fz)$.
\end{definition}

\begin{remark}
 Lie algebra cohomology is a concept that unfolds its importance in
 particular when considering infinite-dimensional Lie algebras. For
 instance, Whitehead's lemma \cite[Thm.\ III.13]{Jacobson62Lie-algebras}
 asserts that $H^{2}_{c}(\fg ,\fz)$ vanishes if $\fg$ and $\fz$ are
 finite-dimensional and $\fg$ is semi-simple.

 The importance of $H^{2}_{c}(\fg,\fz)$ comes from the fact that it
 classifies (topologically split)\footnote{Our central extensions of Lie
 algebras are always assumed to be topologically split, but different
 authors follow different conventions for this.} \emph{central
 extensions} of topological Lie algebras, i.e.\ short exact sequences
 $\fz\to\wh{\fg}\xrightarrow{q}\fg$ for which there exists a continuous
 and linear right inverse of $q$ \cite[Prop.\
 V.2.10]{Neeb06Towards-a-Lie-theory-of-locally-convex-groups}. In
 infinite dimensions a prominent example for a non-trivial
 $H^{2}_{c}(\fg,\fz)$ comes from $\fg=C^{\infty}(S^{1},\fk)$, $\fz=\R$
 and the Kac-Moody cocycle
 \begin{equation}\label{eqn:Kac-Moody-cocycle}
  \omega_{\sprod} \from \fg\times\fg \to \R, \quad (f,g)\mapsto \int_{S^{1}} 
  \langle f(t),g'(t) \rangle dt,
 \end{equation}
 where $\langle \cdot,\cdot \rangle$ is the Killing form of the
 finite-dimensional simple Lie algebra $\fk$.
\end{remark}

\begin{tabsection}
 In \cite{Neeb02Central-extensions-of-infinite-dimensional-Lie-groups}
 it is described how Lie algebra cocycles may be integrated to (locally
 smooth) group cocycles. We shall now introduce slightly more general
 objects (cf.\ \cite[Sect.\ 2]{Breen92Theorie-de-Schreier-superieure})
 covering in particular the (locally smooth) group cocycles from
 \cite{Neeb02Central-extensions-of-infinite-dimensional-Lie-groups} (see
 also
 \cite{WeinsteinXu91Extensions-of-symplectic-groupoids-and-quantization}
 or \cite{TuynmanWiegerinck87Central-extensions-and-physics} for other
 occurrences of this concept).
\end{tabsection}

\begin{definition}
 Let $G$ be an arbitrary Lie group and $A\xrightarrow{\tau} Z$ be a
 morphism of abelian Lie groups. Then a \emph{generalised group cocycle}
 on $G$ with coefficients $A\xrightarrow{\tau}Z$ (shortly called
 generalised cocycle if the setting is understood) consists of two maps
 $F\in C^{2}(G,Z)$ and $\Theta \in C^{3}(G,A)$ such that
 \begin{gather}
  \dd_{\op{gp}}F =\tau \circ\Theta
  \label{eqn:cocycle-identity-for-generalised-cocycle-1}\\
  \dd_{\op{gp}} \Theta=0.
  \label{eqn:cocycle-identity-for-generalised-cocycle-2}
 \end{gather}
 A \emph{morphism} of generalised cocycles
 $(\varphi ,\psi)\from (F,\Theta)\to (F',\Theta ')$ consists of two maps
 $\varphi \in C^{1}(G,Z)$ and $\psi \in C^{2}(G,A)$ such that
 $F=F'+\dd_{\op{gp}}\varphi +\tau \circ \psi$ and
 $\Theta=\Theta'+ \dd_{\op{gp}}\psi$. Furthermore, a \emph{2-morphism}
 $\gamma \from (\varphi ,\psi)\Rightarrow (\varphi' ,\psi')$ between two
 morphisms of generalised cocycles is given by a map
 $\gamma \in C^{1}( G,A)$ such that $\psi=\psi'+\dd_{\op{gp}}\gamma$.
\end{definition}

\begin{tabsection}
 If we view discrete groups as zero-dimensional Lie groups, then the
 preceding definition also yields the concept of generalised cocycles
 without any smoothness assumptions. That is why we do not explicitly
 distinguish between cocycles with or without smoothness conditions.

 In this paper we shall mostly deal with the case that $A$ is a discrete
 group. This implies that $\Theta$ vanishes on some
 identity neighbourhood for smooth maps are in particular continuous.
\end{tabsection}

\begin{remark}
 The previous definition reduces to the case of locally smooth
 cohomology $H^{2}(G,Z)$ (cf.\ \cite[Def.\
 4.4]{Neeb02Central-extensions-of-infinite-dimensional-Lie-groups}) if
 we consider generalised cocycles modulo morphisms with coefficients
 $0\to Z$. Generalised cocycles with $(0\to Z)$-coefficients will
 sometimes be called \emph{2-cocycles} (or simply cocycles if the
 dimension is understood) with coefficients (or values in) $Z$. Similar
 to the case of Lie algebras, $H^{2}(G,Z)$ classifies \emph{central
 extensions of Lie groups}, i.e., short exact sequences
 $Z\to \wh{G}\to G$ possessing smooth local sections\footnote{This is
 equivalent to demanding that $Z\to \wh{G}\to G$ is a locally trivial
 principal bundle. Our central extensions of Lie groups are always
 assumed to be locally trivial principal bundles, but as above,
 different authors follow different conventions for this} (see
 \cite[Prop.\
 4.2]{Neeb02Central-extensions-of-infinite-dimensional-Lie-groups} and
 Example \ref{ex:cocycle-for-the-universal-covering}). Note also that a
 generalised cocycle $(F,\Theta)$ yields a 2-cocycle
 $\cq{(Z/\im (\tau))} \op{\circ} F$ with values in $Z/\im (\tau)$
 provided $\im(\tau)$ is discrete. In this case,	we call
 $\cq{(Z/\im (\tau))} \circ F$ the \emph{band} of $(F,\Theta)$.

 If we take coefficients $A\to 0$ and consider generalised cocycles
 modulo morphisms, then this yields the corresponding higher locally
 smooth cohomology $H^{3}(G,A)$ (cf.\ \cite[Def.\
 V.2.5]{Neeb06Towards-a-Lie-theory-of-locally-convex-groups}).
 Generalised cocycles with $(A\to 0)$-coefficients will sometimes also
 be called \emph{3-cocycles} (or simply cocycles if the dimension is
 understood) with coefficients (or values in) $A$.
\end{remark}

\begin{tabsection}
 We are now heading for a description of the integration procedure from
 \cite{Neeb02Central-extensions-of-infinite-dimensional-Lie-groups}. In
 order to do so, we give a slightly more conceptual construction in the
 following two lemmata that we will use later on in our generalised
 construction. They describe the simplicial part of the procedure for
 enlarging local group cocycles to global ones (cf.\
 \cite{EstKorthagen64Non-enlargible-Lie-algebras}). Variants of this
 construction are implicitly hidden in
 \cite{Iglesias95La-trilogie-du-moment} and
 \cite{BrylinskiMcLaughlin93A-geometric-construction-of-the-first-Pontryagin-class}.
 However, none of the above authors relates those cocycles to (locally
 smooth) group cohomology.

 Recall that our assumption is that $G$ is a 1-connected Lie group with Lie algebra $\fg$.
\end{tabsection}

\begin{lemma}\label{lem:homology-cocycle}
 Assume  that there exist maps
 $\alpha \from G\to     C^{\infty}(\Delta^{(1)},G)$ and
 $\beta  \from G^{2}\to C^{\infty}(\Delta^{(2)},G)$
 such that
 \begin{gather} \label{eqn:coherent-choice-of-paths-1}
  \alpha_{e}\equiv e,\quad\alpha_{g}(0)=e,\quad\alpha_{g}(1)=g, \quad
  \beta_{g,g}(s,t)=\alpha_{g}(s+t)\quad\tx{ and}\\ %
  \label{eqn:coherent-choice-of-paths-2}
  \partial\beta_{g,h}=\alpha_{g}+g.
  \alpha_{h}-\alpha_{gh}.
 \end{gather}
 Then $\dd_{\op{gp}}\beta \from G^{3}\to C_{2}(G)$ takes values in
 $Z_{2}(G)$ and we have $\dd_{\op{gp}}\Theta_{\beta}=0$ if we set
 $\Theta_{\beta}:=q \op{\circ} \dd_{\op{gp}}\beta$ with
 $q\from Z_{2}(G)\to H_{2}(G)\cong \pi_{2}(G)$ the canonical quotient
 map.
\end{lemma}

We took $\beta$ as sole subscript, indicating the dependence of
$\Theta$ on $\alpha$ and $\beta$, for $\alpha$ is
completely determined by $\beta$.

\begin{proof}
 From \eqref{eqn:coherent-choice-of-paths-2} it follows directly that
 \begin{equation*}
  \partial (\dd_{\op{gp}}\beta)(g,h,k)=\partial (g.\beta_{h,k})-\partial \beta_{gh,k}+
  \partial \beta_{g,hk}-\partial \beta_{g,h}
 \end{equation*}
 vanishes and thus $\Theta_{\beta}$ takes values in $Z_{2}(G)$. That
 $\Theta_{\beta}\in C^{3}(G,\pi_{2}(G))$ follows from
 $\beta_{g,g}(s,t)=\alpha_{g}(s+t)$, for then $\Theta_{\beta} (g,h,k)$
 is null-homotopic if one of $g,h$ or $k$ equals $e$. Moreover,
 $\dd_{\op{gp}}\Theta_{\beta}=0$ follows from $\dd_{\op{gp}}^{2}=0$.
\end{proof}

\begin{lemma}
 If $\alpha',\beta'$ is another pair of maps satisfying
 \eqref{eqn:coherent-choice-of-paths-1} and
 \eqref{eqn:coherent-choice-of-paths-2}, then there exists a map
 $\gamma\from G\to C^{\infty}(\Delta^{(2)},G)$ with
 $\partial \gamma_{g}=\alpha_{g}+g.\alpha_{e}-\alpha'_{g}$. Moreover,
 $b_{\gamma}(g,h)=\beta_{g,h}-\beta'_{g,h}-\dd_{\op{gp}}\gamma$ takes
 values in $Z_{2}(G)$ and satisfies
 $\Theta_{\beta}=\Theta_{\beta'}+ q \circ\dd_{\op{gp}} b_{\gamma}$.
\end{lemma}

\begin{proof}
 Since $G$ is simply connected, the map $\gamma$ exists by \cite[Prop.\
 5.6]{Neeb02Central-extensions-of-infinite-dimensional-Lie-groups}. Just
 as above it is checked that $b_{\gamma}$ takes values in $Z_{2}(G)$.
 Moreover, $\dd_{\op{gp}}^{2}=0$ yields
 $q \circ\dd_{\op{gp}}b_{\gamma}=q \circ\dd_{\op{gp}}(\beta-\beta')=\Theta_{\beta}-\Theta_{\beta'}$.
\end{proof}

\begin{tabsection}
 That $\Theta_{\beta}$ is a cocycle is actually trivial since we wrote
 is as a coboundary of the group cochain $\beta$. The point here is that
 it takes values in the much smaller subgroup $Z_{2}(G)$ and as cocycle
 with values in this group it is \emph{not} trivial. Its projection to
 $\pi_{2}(G)$ is even the other extreme, namely \emph{universal}, at
 least for discrete abelian groups (see
 \cite{PorstWockel08Higher-conneced-covers-of-topological-groups-via-categorified-central-extensions}
 and Example \ref{ex:cocycle-for-the-universal-covering}).

 In general, the maps $\alpha$ and $\beta$ that we are going to choose
 for our construction are pretty arbitrary. However, when fixing a chart
 around the identity then there exists a canonical choice for
 $\alpha_{g}$ and $\beta_{g,h}$ if $g$ and $h$ are ``close'' to the
 identity:
\end{tabsection}

\begin{lemma}\label{lem:homology-cocycle-actual-choice-for-a-chart}
 Let $\varphi \from U\to \wt{U} \se \fg$ be a chart with
 $\varphi (e)=0$, $\varphi (U)$ convex and $\wt{V}\se \wt{U}$ open and
 convex such that $e\in V:=\varphi^{-1}(\wt{V})$ and
 $V\cdot V\se U$. For $g\in U$ we set
 $\tilde{g}:=\varphi(g)$ and set
 $\tilde{g}\mathinner{*}\tilde{h}=\wt{gh}$ for $g,h\in V$. If we define
 \begin{align}
  \label{eqn:cocycle-from-chart-1}
  \alpha_{g}(t) & =\varphi^{-1} (t\cdot\tilde{g}),\\
  \label{eqn:cocycle-from-chart-2}
  \beta_{g,h}(t,s) & = \varphi ^{-1}\left(t(\tilde{g}\mathinner{*} s\tilde{h})+ 
  s(\tilde{g}\mathinner{*} (1-t)\tilde{h})\right)
 \end{align}
 for $g,h\in V$, then these assignments can be extended to mappings
 $\alpha$ and $\beta$ satisfying \eqref{eqn:coherent-choice-of-paths-1}
 and \eqref{eqn:coherent-choice-of-paths-2}. Moreover, if for a different
 chart $\varphi'$ we set $\bar{g}:=\varphi'(g)$ and
 \begin{equation}
  \label{eqn:cocycle-from-chart-3}
  \gamma_{g}(s,t)=
  \varphi^{-1}\left(\frac{s(1-t)}{t+s}\varphi(\varphi'^{-1}((t+s)\bar{{g}})))+t(1+s)\tilde{g}\right)
 \end{equation}
 for $g\in V\cap V'$, then this assignment can be extended to a map
 $\gamma\from G\to C^{\infty}_{*}(\Delta^{(2)},G)$, satisfying
 $\partial \gamma_{g}=\alpha_{g}-\alpha'_{g}$. In addition, if $W\se V$ with
 $W\cdot W\se V$, then $\left.\Theta_{\beta}\right|_{W\times W\times W}$
 and $\left.b_{\gamma}\right|_{W\times W}$ are smooth.
\end{lemma}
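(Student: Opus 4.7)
The proof breaks into three parts: (1) verification that the chart formulas are well-defined smooth maps satisfying the boundary identities on the chart neighbourhoods, (2) extension to globally defined maps, and (3) smoothness of $\Theta_{\beta}$ and $b_{\gamma}$ on the small neighbourhood $W$.

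For (1), the formulas are well-defined because $\varphi(U)$ is convex and the condition $V\cdot V\se U$ ensures that $\tilde{g}\mathinner{*}s\tilde{h}\in\varphi(U)$ for $g,h\in V$ and $s\in[0,1]$, so the relevant convex combinations remain in $\varphi(U)$. Smoothness on $V$, $V\times V$ and $V\cap V'$ follows immediately from smoothness of $\varphi$, $\varphi^{-1}$ and the group multiplication. The boundary identity $\partial \beta_{g,h}=\alpha_{g}+g.\alpha_{h}-\alpha_{gh}$ reduces to three one-line evaluations on the edges of $\Delta^{(2)}$: on $s=0$ one gets $\varphi^{-1}(t\tilde{g})=\alpha_{g}(t)$; on $t=0$ one gets $\varphi^{-1}(s\wt{gh})=\alpha_{gh}(s)$; and on the edge $t=1-s$ the two summands collapse to $\tilde{g}\mathinner{*}s\tilde{h}$, giving $g\cdot\alpha_{h}(s)$. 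The analogous edge-wise computation verifies $\partial\gamma_{g}=\alpha_{g}-\alpha'_{g}$.

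For (2), I extend $\alpha$ to all of $G$ by choosing an arbitrary pointed smooth path from $e$ to $g$ for each $g\notin V$, with $\alpha_{e}\equiv e$. Given the extended $\alpha$, for each pair $(g,h)\in G^{2}$ the chain $\alpha_{g}+g.\alpha_{h}-\alpha_{gh}$ is a pointed 1-cycle in $G$, and since $G$ is simply connected it bounds a pointed 2-chain $\beta_{g,h}$, exactly by the filling procedure already invoked in Lemma~\ref{lem:homology-cocycle} via \cite[Prop.~5.6]{Neeb02Central-extensions-of-infinite-dimensional-Lie-groups}. The fillings can be chosen to agree with the chart formula on $V\times V$ and to satisfy the diagonal normalisation $\beta_{g,g}(s,t)=\alpha_{g}(s+t)$; the ambiguity in a filling is precisely an element of $\pi_{2}(G)$, which gives enough flexibility to enforce this normalisation. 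The extension of $\gamma$ proceeds in exactly the same way from the 1-cycle $\alpha_{g}-\alpha'_{g}$.

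For (3), choose $W\se V$ open with $W\cdot W\se V$. For $(g,h,k)\in W^{3}$ one has $gh,hk\in V$, so each of the four terms $g.\beta_{h,k}$, $\beta_{gh,k}$, $\beta_{g,hk}$ and $\beta_{g,h}$ appearing in $\dd_{\op{gp}}\beta(g,h,k)$ is given by the explicit chart formula and therefore depends smoothly on $(g,h,k)$. Postcomposing with the quotient $q\from Z_{2}(G)\to H_{2}(G)\cong\pi_{2}(G)$ maps the smoothly varying 2-cycle into a discrete group; continuity on the connected domain $W^{3}$ forces the composite to be locally constant, hence smooth. The same reasoning applies to $b_{\gamma}|_{W\times W}$. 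The main obstacle is clearly step (2): arranging the global fillings so that they simultaneously agree with the chart formula on a neighbourhood of the identity and obey the diagonal normalisation. All remaining steps are either direct edge-wise calculations or instances of the general principle that continuous maps into discrete groups are locally constant.
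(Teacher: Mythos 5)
Your proposal follows the same route as the paper's (very terse) proof: direct verification of the boundary identities on the chart neighbourhood, extension of $\alpha$ by connectedness and of $\beta,\gamma$ by filling $1$-cycles via simple connectedness (the same \cite[Prop.~5.6]{Neeb02Central-extensions-of-infinite-dimensional-Lie-groups}), and smoothness on $W$ from the explicit formulas together with local constancy of maps into the discrete group $H_{2}(G)$. The only cosmetic difference is your appeal to the $\pi_{2}(G)$-ambiguity of fillings to enforce the normalisation at the identity, which is more directly arranged by taking the degenerate simplices $\beta_{g,e}$, $\beta_{e,h}$ outright; otherwise the argument is correct and matches the paper.
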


\begin{proof}
 It is easily checked that $\alpha_{g}$ and $\beta_{g,h}$ defined as in
 \eqref{eqn:cocycle-from-chart-1} and \eqref{eqn:cocycle-from-chart-2}
 satisfy \eqref{eqn:coherent-choice-of-paths-1} and
 \eqref{eqn:coherent-choice-of-paths-2}. Since $G$ is connected, we may
 choose for each $g\in G\backslash U$ some
 ${\alpha}_{g}\in C^{\infty}(\Delta^{(1)},G)$ with ${\alpha}_{g}(0)=e$
 and ${\alpha}_{g}(1)=g$.

 For $g,h\in G$ with $g\notin V$ or $h\notin V$,
 $\alpha_{g}+g.\alpha_{h}-\alpha_{gh}$ is in $Z_{2}(G)$, and thus there
 exists some ${\beta}_{g,h}\in C^{\infty}(\Delta^{(2)},G)$ with
 $\partial {\beta}_{g,h}=\alpha_{g}+g.\alpha_{h}-\alpha_{gh}$ because $G$ is
 simply connected (cf.\ \cite[Prop.\
 5.6]{Neeb02Central-extensions-of-infinite-dimensional-Lie-groups}).
 Moreover, we may choose $\beta_{g,g}(s,t)=\alpha(s+t)$. It is immediate
 that for $\gamma_{g}$ as defined in \eqref{eqn:cocycle-from-chart-3} we
 have $\partial \gamma_{g}=\alpha_{g}-\alpha_{g}'$. Since $G$ is simply
 connected, we may choose for each $g\notin V\cap V'$ some $\gamma_{g}$
 with $\partial \gamma_{g}=\alpha_{g}-\alpha'_{g}$. The rest is
 immediate.
\end{proof}

We now come to the description of the integration procedure from
\cite{Neeb02Central-extensions-of-infinite-dimensional-Lie-groups}\footnote{Variants
of this procedure for the case of Kac-Moody central extensions can be found,
for instance, in \cite{PressleySegal86Loop-groups},
\cite{Mickelsson87Kac-Moody-groups-topology-of-the-Dirac-determinant-bundle-and-fermionization},
\cite{Brylinski93Loop-spaces-characteristic-classes-and-geometric-quantization}
and \cite{MurrayStevenson03Higgs-fields-bundle-gerbes-and-string-structures}.
Implicitly, the cocycles that we shall construct here are already apparent in
their constructions.}.

\begin{remark}\label{rem:integrationInTheCaseOfADiscretePeriodGroup}
 Associated to each Lie algebra cocycle $\omega\from\fg\times\fg\to\fz$
 is its \emph{period homomorphism}
 $\per_{\omega} \from \pi_{2}(G)\to \fz$. This is given on (piecewise)
 smooth representatives by $[\sigma]\mapsto \int_{\sigma}\omega^{l}$,
 where $\omega^{l}$ is the left-invariant $\fz$-valued 2-form on $G$
 with $\omega^{l}(e)=\omega$ (cf.\
 \cite{Neeb02Central-extensions-of-infinite-dimensional-Lie-groups} or
 \cite{Wockel06A-Generalisation-of-Steenrods-Approximation-Theorem} for
 the fact that each homotopy class contains a smooth representative and
 \cite{Neeb02Central-extensions-of-infinite-dimensional-Lie-groups} for
 the fact $\int_{\sigma}\omega^{l}$ is independent on the choice of a
 representative). We define $F_{\omega,\beta }\from G\times G\to \fz$ by
 \begin{equation}\label{eqn:definition-of-F}
  F_{\omega,\beta }(g,h):=\int_{\beta_{g,h}}\omega^{l},
 \end{equation}
 where $\beta \from G^{2}\to C^{\infty}_{*}(\Delta^{(2)},G)$ is the map
 from Lemma \ref{lem:homology-cocycle-actual-choice-for-a-chart} applied
 to a chart $\varphi$ with $d\varphi (e)=\id_{\fg }$. Since
 $\beta (g,g)$ and $\beta (e,g)$ are null-homotopic, it follows that
 $F_{\omega,\beta}(g,e)=F_{\omega,\beta}(e,g)=0$. That
 $(F_{\omega,\beta},\Theta_{\beta})$ is a generalised cocycle with
 coefficients $\pi_{2}(G)\xrightarrow{\per_{\omega}}\fz$ follows from
 $\dd_{\op{gp}}\Theta_{\beta}=0$, from
 \begin{align}
  \notag
  \dd_{\op{gp}}F(g,h,k)&=F_{\omega,\beta }(h,k)-F_{\omega,\beta }(gh,k)+F_{\omega,\beta }(g,hk)-F_{\omega,\beta }(g,h)\\\notag
  &=\int_{\beta_{h,k}}\omega^{l}-\int_{\beta_{gh,k}}\omega^{l}+
  \int_{\beta_{g,hk}}\omega^{l}-\int_{\beta_{g,h}}\omega^{l}\\\label{eqn:cocycle-identity-for-integrating-cocycle}
  &=\int_{g.\beta_{h,k}-\beta_{gh,k}+\beta_{g,hk}-\beta_{g,h}}\omega^{l}=\per_{\omega}(\Theta_{\beta}(g,h,k))
 \end{align}
 and from the fact that the maps $V\times V\ni (g,h)\mapsto \beta_{g,h}$
 and
 $C^{\infty}(\Delta^{(2)},G)\ni \beta \mapsto \int_{\beta}\omega^{l}$
 are smooth.

 Since Neeb only considers 2-cocycles (and no generalised cocycles), he
 is forced\footnote{From this discussion it is clear that it is
 sufficient to divide out $\Pi_{\omega}:=\per_{\omega}(\pi_{2}(G))$ in
 order to ensure the cocycle identity. From Lemma
 \ref{lem:lemma-for-universality-2} is follows that this is also
 necessary} to consider equation
 \eqref{eqn:cocycle-identity-for-integrating-cocycle} modulo
 $\Pi_{\omega}$ and thus obtains a 2-cocycle
 $f_{\omega,\beta}(g,h):=[F_{\omega,\beta}(g,h)]$ with values in
 $Z_{\omega }:=\fz/\Pi_{\omega}$. The drawback is of course that he
 needs to assume that $\Pi_{\omega}$ is discrete in order to consider
 $Z_{\omega }$ as a Lie group (see Remark
 \ref{rem:diffeological-lie-groups} for a proposal on how to use
 diffeological Lie groups in this context).
\end{remark}

We will now make precise in which sense a group cocycle may ``integrate'' a
Lie algebra cocycle.
Recall that our standing assumption is that $\fg$ is the Lie algebra of
$G$, that $\fz$ is an arbitrary Mackey-complete locally convex
space and that $Z=\fz/\Gamma$ for $\Gamma\leq\fz$ discrete.

\begin{lemma}\label{lem:defived-cocycle}
 Let $A$ be discrete and $A\xrightarrow{\tau}Z$ be a morphism of abelian
 Lie groups. If $F\from G^{2}\to Z$ and $\Theta\from G^{3}\to A$ is a
 generalised cocycle, then $dF(e,e)$ vanishes and we get a Lie algebra
 cocycle
 \[
  L(F)\from \fg\times \fg\to\fz, \quad(x,y)\mapsto
  d^{2}F((x,0),(0,y))-d^{2}F((y,0),(0,x))
 \]
\end{lemma}

\begin{proof}
 Let $U\se G$ be an identity neighbourhood such that
 $\left.F\right|_{U\times U}$ and
 $\left.\Theta\right|_{U\times U\times U}$ are smooth maps. From $F(e,g)=F(g,e)=0$
 it follows that $dF(e,e)$ vanishes. Moreover,
 $\left.\Theta\right|_{U\times U\times U}$ vanishes since it is in
 particular continuous and $A$ is discrete. Thus
 \[
  F(g,h)+F(gh,k)-F(g,hk)-F(h,k)=0
 \]
 for $g,h,k$ in $U$. Since the computation of $L(F)$ in \cite[Lem.\
 4.6]{Neeb02Central-extensions-of-infinite-dimensional-Lie-groups} only
 depends on the values of $F$ on $U\times U$, the same calculation shows
 the claim.
\end{proof}

\begin{definition}\label{def:integration_of_cocycles}
 A generalised cocycle $(F,\Theta)$ as in the previous lemma is said to
 \emph{integrate} a $\fz$-valued Lie-algebra cocycle $\omega$ if $L(F)$
 is equivalent to $\omega$.
\end{definition}

\begin{theorem}\label{thm:integration-of-cocycles}
 The generalised cocycle $(F_{\omega,\beta},\Theta_{\beta})$ from Remark
 \ref{rem:integrationInTheCaseOfADiscretePeriodGroup} integrates
 $\omega$.
\end{theorem}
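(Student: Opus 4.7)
The plan is to split the verification into two steps: first, check that $(F_{\omega,\beta},\Theta_\beta)$ is honestly a smooth $\cZ_\omega$-valued generalised cocycle (i.e., that the identities \eqref{eqn:cocycle-identity-for-generalised-cocycle-1} and \eqref{eqn:cocycle-identity-for-generalised-cocycle-2} hold and that both components are smooth on some unit neighbourhood), and second, compute $L(F_{\omega,\beta})$ and identify it, up to Lie algebra coboundary, with $\omega$.

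For the first step almost all of the work has already been done in Remark \ref{rem:integrationInTheCaseOfADiscretePeriodGroup} and Lemma \ref{lem:homology-cocycle}. Indeed, the calculation in \eqref{eqn:cocycle-identity-for-integrating-cocycle} used only left-invariance of $\omega^l$ together with the definition of $\Theta_\beta$, and it is exactly the assertion $\dd_{\op{gp}}F_{\omega,\beta}=\per_\omega\circ \Theta_\beta$, which is \eqref{eqn:cocycle-identity-for-generalised-cocycle-1} for the crossed module $\per_\omega\from \pi_2(G)\to \fz$. The second identity $\dd_{\op{gp}}\Theta_\beta=0$ is precisely the last sentence of Lemma \ref{lem:homology-cocycle}. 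For local smoothness, Lemma \ref{lem:homology-cocycle-actual-choice-for-a-chart} produces a unit neighbourhood $W$ on which $(g,h)\mapsto \beta_{g,h}\in C^\infty_*(\Delta^{(2)},G)$ is smooth; composing with the smooth integration map $\sigma\mapsto \int_\sigma \omega^l$ gives smoothness of $F_{\omega,\beta}$ on $W\times W$. For $\Theta_\beta$, values lie in the discrete group $\pi_2(G)$, and shrinking $W$ so that the 2-chain $\dd_{\op{gp}}\beta(g,h,k)$ sits inside a contractible coordinate patch makes $\Theta_\beta$ vanish on $W^3$; in particular it is smooth there.

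For the second step, observe that the quantity $L(F_{\omega,\beta})(x,y)=T^2F_{\omega,\beta}(x,y)-T^2F_{\omega,\beta}(y,x)$ depends only on the restriction of $F_{\omega,\beta}$ to a unit neighbourhood, where the explicit formulas \eqref{eqn:cocycle-from-chart-1}--\eqref{eqn:cocycle-from-chart-2} apply. Choosing the chart $\varphi$ so that $d\varphi(0)=\id_\fg$, the local expression
\[
F_{\omega,\beta}(g,h)=\int_{\beta_{g,h}}\omega^l
\]
coincides with the local integrating cochain used by Neeb; by \cite[Lem.\ 4.6]{Neeb02Central-extensions-of-infinite-dimensional-Lie-groups}, whose hypotheses only probe values of $F$ on a unit neighbourhood, one has $L(F_{\omega,\beta})=\omega$. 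Thus $(F_{\omega,\beta},\Theta_\beta)$ integrates $\omega$ in the sense of the preceding definition.

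The main obstacle is really just step two: the second step is the only place where actual analysis (Taylor expansion of a two-dimensional fibre integral of the left-invariant form $\omega^l$ along the simplices $\beta_{g,h}$) enters. The reason it is not difficult here is that the local simplex $\beta_{g,h}$ has been designed in Lemma \ref{lem:homology-cocycle-actual-choice-for-a-chart} to match the one appearing in Neeb's local integration scheme, so that his Taylor computation applies verbatim; the genuinely new content of the theorem is that \emph{no} discreteness assumption on $\Pi_\omega$ is needed, a feature that is absorbed entirely into the extra datum $\Theta_\beta$ supplied by the 2-group $\cZ_\omega$.
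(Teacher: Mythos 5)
Your proposal is correct and follows essentially the same route as the paper: the cocycle identities come from \eqref{eqn:cocycle-identity-for-integrating-cocycle} together with Lemma \ref{lem:homology-cocycle}, smoothness from the construction in Lemma \ref{lem:homology-cocycle-actual-choice-for-a-chart}, and the identification $L(F_{\omega,\beta})=\omega$ by matching the local formula with Neeb's integrating cochain. The only cosmetic difference is the citation: the paper invokes \cite[Lemma 6.2]{Neeb02Central-extensions-of-infinite-dimensional-Lie-groups} (which computes the derived cocycle of the specific local function $f_V$) rather than Lemma 4.6, but the substance of the argument is identical.
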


\begin{proof}
 Since $\left.F\right|_{V\times V}$ coincides with the function
 $f\from V\times V\to\fz$ in \cite[Lem.\
 6.2]{Neeb02Central-extensions-of-infinite-dimensional-Lie-groups},
 associated to the cocycle $\omega$ and the smooth maps
 $\sigma_{g,h}\from \Delta^{(2)}\to G$ from \cite[Lem.\
 6.2]{Neeb02Central-extensions-of-infinite-dimensional-Lie-groups}
 coincide with $\beta_{g,h}$ as defined in Lemma
 \ref{lem:homology-cocycle-actual-choice-for-a-chart}, \cite[Lem.\
 6.2]{Neeb02Central-extensions-of-infinite-dimensional-Lie-groups} shows
 \begin{equation*}
  L(F)(x,y)=d^{2}F((x,0),(0,y))-d^{2}F((y,0),(0,x))=\omega (x,y).
 \end{equation*}
\end{proof}

\begin{tabsection}
 We now argue that the generalised cocycle
 $(F_{\Omega,\beta},\Theta_{\beta})$ does essentially not depend on the
 choices that we made.
\end{tabsection}

\begin{remark}\label{rem:dependence-on-choices}
 The construction in Remark
 \ref{rem:integrationInTheCaseOfADiscretePeriodGroup} and the preceding
 proof depends on the actual choice of the map
 $\beta\from G\times G\to C^{\infty}(\Delta^{(2)},G ) $, which in turn
 depends on the choice of a chart $\varphi$. However, for two different
 choices the resulting cocycles $\Theta_{\beta}$ and
 $\Theta_{\beta'}$ are equivalent by Lemma \ref{lem:homology-cocycle}
 and Lemma \ref{lem:homology-cocycle-actual-choice-for-a-chart}.
 Moreover, if $\gamma\from G\to C^{\infty}_{*}(\Delta^{(2)},G)$ is the
 corresponding map as defined in Lemma
 \ref{lem:homology-cocycle-actual-choice-for-a-chart}, then we obtain a
 morphism
 $(\varphi,\psi)\from (F_{\omega,\beta},\Theta_{\beta})\rightarrow (F_{\omega,\beta'},\Theta_{\beta'})$,
 given by $\psi (g,h)=[b_{\gamma }(g,h)]$ and
 \begin{equation*}
  \varphi(g)=\int _{\gamma_{g} }\omega ^{l}.
 \end{equation*}
 
 If two Lie algebra cocycles $\omega$ and $\omega'$ are equivalent, then
 $\omega(x,y)=\omega'(x,y)+b([x,y])$ for $b\from \fg\to\fz$ linear and
 continuous. This leads to
 \begin{equation*}
  \int_{\beta_{g,h}}(\omega-\omega')^l=
  \int_{\beta_{g,h}}d(b^l)=
  \int_{\partial\beta_{g,h}}b^l=
  \int_{\alpha_g}b^l+
  \int_{g.\alpha_h}b^l-		
  \int_{\alpha_{gh}}b^l		
 \end{equation*}
 by Stokes Theorem. We thus obtain a morphism
 $(\varphi,\psi)\from (F_{\omega,\beta},\Theta_{\beta})\to (F_{\omega',\beta},\Theta_{\beta})$
 with $\psi\equiv 0$ and
 \begin{equation*}
  \varphi(g)=\int_{\alpha_g}b^l.
 \end{equation*}
\end{remark}

\begin{tabsection}
 We conclude this section with showing that the cocycle
 $(F_{\omega,\beta},\Theta_{\beta})$ we constructed here is universal
 for generalised cocycles that integrate $\omega$. This may be seen as a
 substitute for the exact sequence \cite[Thm.\
 7.12]{Neeb02Central-extensions-of-infinite-dimensional-Lie-groups}
 \begin{equation*}
  0\to \op{Ext}_{\op{Lie}}(G,Z)\xrightarrow{L} {H}^{2}_{c}(\fg,\fz)\xrightarrow{P} \Hom(\pi _{2}(G),Z).
 \end{equation*}
 The next lemma is
 the generalisation of the injectivity of $L$ (cf.\ \cite[Prop.\
 7.4]{Neeb02Central-extensions-of-infinite-dimensional-Lie-groups}) for
 not necessarily discrete subgroups $\Gamma\se\fz$.
\end{tabsection}

\begin{lemma}\label{lem:lemma-for-universality-1}
 Let $F\in C^{2}(G,\fz)$ be such that $\dd_{\op{gp}}F$ vanishes on some
 identity neighbourhood of $G^{3}$, $\dd_{\op{gp}} F$ takes values in
 some subgroup $\Gamma$ of $\fz$ and $L(F)$ is
 trivial as a Lie algebra cocycle. Then there exists
 $\varphi\in C^{1}(G,\fz)$ such that $F-\dd_{\op{gp}}\varphi$ vanishes
 on some identity neighbourhood and takes values in $\Gamma$ on
 $G\times G$.
\end{lemma}

\begin{proof}
 First note that $L(F)$ actually defines a Lie algebra cocycle by the
 same argument as in Lemma \ref{lem:defived-cocycle}. Since $L(F)$ is
 trivial, there exists a continuous and linear map $\chi\from \fg\to\fz$
 such that $L(F)=\chi([\mathinner{\cdot},\mathinner{\cdot}])$.

 Let $U,V\se G$ be contractible identity neighbourhoods such that
 $\left.\dd_{\op{gp}}F\right|_{U\times U\times U}$ vanishes,
 $\left.F\right|_{U\times U}$ is smooth and $V^{2}\se U$. Then
 $(\fz\times U,\fz\times V,\mu_{F},(0,e))$ with
 \begin{equation*}
  \mu_{F}((z,g),(w,h)):=(z+w+F(g,h),gh)
 \end{equation*}
 is a local Lie group with Lie algebra $\fz\oplus_{L(F)}\fg$. Since
 $L(F)=\chi([\mathinner{\cdot},\mathinner{\cdot}])$, we have that
 \begin{equation*}
 \fz\oplus _{L(F)}\fg\ni (z,x)\mapsto z+\chi(x)\in \fz
 \end{equation*} 
defines a
 homomorphism of Lie algebras. This we may integrate to a homomorphism
 of local Lie groups, given by $(z,g)\mapsto z+\varphi(g)$. By shrinking
 $V$ if necessary we may assume that $\varphi$ is defined on $V$. That this
 map is a homomorphism implies that $F-\dd_{\op{gp}}\varphi$ vanishes on
 $V\times V$.

 Since $\dd_{\op{gp}}F$ takes values in $\Gamma$, we have that
 $f:=q \circ F\from G\times G\to \fz/\Gamma$ is a group cocycle (where
 $q\from \fz\to\fz/\Gamma$ is the canonical projection) and thus
 $(\fz / \Gamma)\times _{f}G$ is a group (which actually is topological,
 but not Hausdorff in general). Now
 \begin{equation*}
  f_{\varphi}\from V\to (\fz /\Gamma) \times _{f}G,\quad g\mapsto (q(\varphi(g)),g)
 \end{equation*}
 satisfies $f_{\varphi}(g)\cdot f_{\varphi}(h)=f_{\varphi}(g\cdot h)$
 wherever defined. Since $G$ is 1-connected, $f_{\varphi}$
 extends\footnote{The group on the right does not need to be topological
 for this, cf.\ \cite[Cor.\
 A.2.26]{HofmannMorris98The-structure-of-compact-groups}} to a unique
 group homomorphism. This extension is given by
 $g\mapsto (\varphi'(g),g)$ for some function
 $\varphi'\from G\to \fz/\Gamma$. Moreover, $\varphi'$ extends
 $q \circ\varphi$ and satisfies $f-\dd_{\op{gp}}\varphi'\equiv 0$. If we
 choose a lift $s\from \fz/\Gamma\to \fz$ with $q(0)\mapsto 0$, then
 $g\mapsto s(\varphi'(g))$ for $g\notin V$ extends $\varphi$ to all of
 $G$ with the desired properties.
\end{proof}

\begin{remark}
 The previous proof easily adapts to the case where $G$ is not simply
 connected. One may construct $\varphi$ as in the previous proof, but if
 $G$ is just connected, $f_{\varphi}$ does not necessarily extend.
 However, it determines a homomorphism
 $\wt{G}\to (\fz/\Gamma)\times_{f}G$, where $\wt{G}$ is the
 1-connected cover of $G$. Restricting this homomorphism to $\pi _{1}(G)$
 yields a homomorphism $\pi _{1}(G)\to \fz/\Gamma$. If this is trivial,
 $f_{\varphi}$ in fact extends to a homomorphism and the argument
 carries over.
\end{remark}

The following lemma is our version of \cite[Thm.\
 7.9]{Neeb02Central-extensions-of-infinite-dimensional-Lie-groups} for non-discrete $\Gamma$.

\begin{lemma}\label{lem:lemma-for-universality-2}
 If $F\in C^{2}(G,\fz)$ is such that $\dd_{\op{gp}}F$ vanishes on some
 identity neighbourhood, $\dd_{\op{gp}}F$ takes values in
 some subgroup $\Gamma$ of $\fz$ and $L(F)$ is equivalent to $\omega$ as
 a Lie algebra cocycle, then $\per_{\omega}(\pi_{2}(G))\se \Gamma$.
\end{lemma}

\begin{proof}
 Since $\per_{\omega}$ does only depend on the cohomology class of
 $\omega$ (cf.\ \cite[Rem.\
 5.9]{Neeb02Central-extensions-of-infinite-dimensional-Lie-groups}), we
 may assume that $L(F)= \omega$. Set $\Theta:=\dd_{\op{gp}}F$ and let
 $U,V\se G$ be open and contractible identity neighbourhoods such that
 $\left.F\right|_{U\times U}$ is smooth,
 $\left. \Theta\right|_{U\times U\times U}$ vanishes and
 $V\cdot V\se U$. For each $g\in G$ we define
 $\kappa_{g}\in \Omega^{1}(g V,\fz)$ by
 \begin{equation*}
  \kappa_{g}(w_{x})=d_{2}F_{g^{-1}\cdot x}(x^{-1}.w_{x}) \tx{ for }w_{x}\in T_{x}\,gV,
 \end{equation*}
 where $d_{2}F_{g}(w_{h}):=dF(0_{g},w_{h})$ for $g,h\in U$ and
 $w_{h}\in T_{h}U$. This is smooth for $\left.F\right|_{U\times U}$ is
 smooth and a straight forward computation shows
 $d \kappa_{g}=\left.\omega^{l}\right|_{gV}$. For $g,h \in G$ with
 $gV\cap hV\neq \emptyset$ we have $g^{-1}h\in U$. Thus
 $\dd_{\op{gp}}F(g^{-1}h,h^{-1}x,x^{-1}\eta(t))$  vanishes for
 $\eta(t)\in gV\cap hV$ and this implies
 \begin{equation*}
  (\kappa_{g}-\kappa_{h})(w_{x})= d_{2}F_{g^{-1}\cdot h}(h^{-1}.w_{x}).
 \end{equation*}
 If $\alpha\from [0,1]\to gV\cap hV$ is smooth, then this in turn yields
 \begin{align*}
  \int_{\alpha}\kappa_{g}-\kappa_{h}=&\int_{0}^{1}d_{2}F_{g^{-1}h}(h^{-1}.\dot \alpha(t))dt\\
  =&F(g^{-1}h,h^{-1}\alpha(1))-F(g^{-1}h,h^{-1}\alpha(0))\\
  =&F(h,h^{-1}\alpha(1))-F(g,g^{-1}\alpha(1))+\Theta(g,g^{-1}h,h^{-1}\alpha(1))-\\
  &F(h,h^{-1}\alpha(0))+F(g,g^{-1}\alpha(0))-\Theta(g,g^{-1}h,h^{-1}\alpha(0)).
 \end{align*}
 Now let $[\sigma]\in \pi _{2}(G)$ be represented by a smooth map
 $\sigma\from [0,1]^{2}\to G$ such that $\sigma$ maps a neighbourhood of
 $\partial [0,1]^{2}$ to $\{e\}$. Then there exists some $n\in \N$ such
 that for $i,j\in\{0,...,n-1\}$
 \begin{equation*}
  \sigma\left(\left[\frac{i}{n},\frac{i+1}{n}\right]\times 
  \left[\frac{j}{n},\frac{j+1}{n}\right]\right)\se g_{ij} V
 \end{equation*}
 for some $g_{ij}\in G$. We denote by $\sigma_{ij}$ the restriction of
 $\sigma$ to
 $[\frac{i}{n},\frac{i+1}{n}]\times [\frac{j}{n},\frac{j+1}{n}]$. Then
 \begin{equation}\label{eqn:lemma-for-universality-1}
  \per_{\omega}([\sigma])=\int _{\sigma}\omega^{l}=\sum_{i,j=0}^{n-1}\int_{\sigma_{ij}}\omega^{l}=
  \sum_{i,j=0}^{n-1}\int_{\sigma_{ij}} d \kappa_{g_{ij}}=\sum_{i,j=0}^{n-1}\int_{\partial\sigma_{ij}} \kappa_{g_{ij}}
 \end{equation}
 by Stokes Theorem. We parametrise the intersection
 $\sigma_{ij}\cap \sigma_{i+1\, j}$ by
 $\mu_{ij}(t):=\sigma(\frac{i+1}{n},\frac{j+t}{n})$ and
 $\sigma_{ij}\cap \sigma_{ij+1}$ by
 $\nu_{ij}(t)=\sigma(\frac{i+t}{n},\frac{j+1}{n})$. In particular, we
 have the identities
 \begin{align*}
  \mu_{i0}(0)=\mu_{i\,n-1}(1)&=e&\mu_{ij}(1)& =\nu_{ij}(1)& \mu_{ij}(1)&=\nu_{i+1\,j}(0)\\
  \nu_{0j}(0)=\nu_{n-1\,j}(1)&=e&\mu_{i\,j+1}(0)&=\nu_{ij}(1)& \mu_{i\,j+1}(0)&=\nu_{i+1\,j}(0).
 \end{align*}
 Since $\left.\sigma\right|_{\partial [0,1]^{2}}$ vanishes, the
 integrals along $\partial \sigma_{ij}\cap \partial \sigma$ in
 \eqref{eqn:lemma-for-universality-1} vanish and we thus have
 \begin{align*}
  \per_{\omega}([\sigma])+\Gamma=&
  \left(\sum_{i,j=0}^{n-1}\int_{\mu_{ij}} \kappa_{g_{ij}}-\kappa_{g_{i+1\,j}}-
  \sum_{i,j=0}^{n-1}\int_{\nu_{ij}} \kappa_{g_{ij}}-\kappa_{g_{ij+1}}\right)+\Gamma=\\&
  \Bigg(\sum_{i,j=0}^{n-1} \dotuline{F(g_{i+1\,j},g_{i+1\,j}^{-1}\mu_{ij}(1))}-\wdotuline{F(g_{ij},g_{ij}^{-1}\mu_{ij}(1))}+\\&
  \sum_{i,j=0}^{n-1}
  -\wdashuline{F(g_{i+1\,j},g_{i+1\,j}^{-1}\mu_{ij}(0))}+\dashuline{F(g_{ij},g_{ij}^{-1}\mu_{ij}(0))}+\\&
  \sum_{i,j=0}^{n-1}
  -\dashuline{F(g_{ij+1},g_{ij+1}^{-1}\nu_{ij}(1))}+\wdotuline{F(g_{ij},g_{ij}^{-1}\nu_{ij}(1))}+\\&
  \sum_{i,j=0}^{n-1}
  \wdashuline{F(g_{ij+1},g_{ij+1}^{-1}\nu_{ij}(0))}-\dotuline{F(g_{ij},g_{ij}^{-1}\nu_{ij}(0))}\Bigg)+\Gamma.
 \end{align*}
 From the above identities it follows that the correspondingly
 underlined terms cancel out. Thus $\per_{\omega}([\sigma])$ is
 contained in $\Gamma$.
\end{proof}

\begin{lemma}\label{lem:equivalences_of_universality}
 Let $(F',\Theta')$ be a generalised cocycle on $G$ that integrates
 $\omega$ (cf.\ Definition \ref{def:integration_of_cocycles}). Assume
 that $p\in\Hom(\pi_{2}(G),A)$ and $\psi\in C^{2}(G,A)$ are such that
 $p \op{\circ} \Theta_{\beta}= \Theta'+\dd_{\op{gp}} \psi$. Then the
 following are equivalent.
 \begin{enumerate}
        \renewcommand{\labelenumi}{\theenumi}
        \renewcommand{\theenumi}{\roman{enumi})}
  \item \label{lem:equivalences_of_universality:item1}
        $  \tau \op{\circ} p= \cq{Z}\op{\circ} \per_{\omega}$.
  \item \label{lem:equivalences_of_universality:item2}
        $\dd_{\op{gp}}(\cq{Z}\op{\circ} F_{\omega,\beta}-F'-\tau \op{\circ} \psi)=0$.
  \item \label{lem:equivalences_of_universality:item3} 			 There exists
        $\varphi\in C^{1}(G,Z)$ such that
        $\cq{Z} \op{\circ} F_{\omega,\beta}=F'+\dd_{\op{gp}}\varphi +\tau \op{\circ} \psi$.
 \end{enumerate}
\end{lemma}

\begin{proof}
 We fist note that $\cq{Z}\from \fz\to Z$ is a covering map and thus there
 exists a section $s\from Z\to \fz$ such that $s(0)=0$ and $s$ is smooth on some zero neighbourhood.

 $\tx{\ref{lem:equivalences_of_universality:item1}}
 \Rightarrow\tx{\ref{lem:equivalences_of_universality:item2}}$:  We set
 $F^{\sharp}:=s \circ (F'+\tau \op{\circ} \psi)$. By Lemma
 \ref{lem:lemma-for-universality-1} there exists
 $\varphi\in C^{1}(G,\fz)$
 such that
 \begin{equation*}
  \xi:= F_{\omega,\beta}- F^{\sharp} - \dd_{\op{gp}} \varphi
 \end{equation*}
 vanishes on some identity neighbourhood and takes values in
 $\cq{Z}^{-1}(A)$. To show the assertion it suffices to show that
 $\dd_{\op{gp}}(\cq{Z}\op{\circ} \xi)=0$. This follows from
 \begin{align*}
  \tau \op{\circ} p \op{\circ} \Theta_{\beta}&=\cq{Z}\op{\circ} \per_{\omega} \op{\circ} \Theta_{\beta}\\
  \Rightarrow \tau \op{\circ} (\Theta' +\dd_{\op{gp}}\psi)&=\cq{Z}\op{\circ} \dd_{\op{gp}}F_{\omega,\beta}\\
  \Rightarrow \dd_{\op{gp}} F'+\tau \op{\circ}\dd_{\op{gp}} \psi&=\dd_{\op{gp}}(\cq{Z}\op{\circ} \xi+F'+\tau \op{\circ} \psi).
 \end{align*}
 
 $\tx{\ref{lem:equivalences_of_universality:item2}}
 \Rightarrow \tx{\ref{lem:equivalences_of_universality:item3}}$: Applying
 Lemma \ref{lem:lemma-for-universality-1} to
 $s \op{\circ}( \cq{Z}\op{\circ} F_{\omega,\beta}-F'-\tau \op{\circ} \psi )$
 gives $\varphi'\in C^{1}(G,\fz)$ such that
 \begin{equation*}
  s \op{\circ}( \cq{Z}\op{\circ} F_{\omega,\beta}-F'-\tau \op{\circ} \psi ) -\dd_{\op{gp}}\varphi'
 \end{equation*}
 has values in $\Gamma$. This implies
 $\cq{Z}\op{\circ} F_{\omega,\beta}-F'-\tau \op{\circ} \psi -\dd_{\op{gp}}\varphi=0 $
 if we set $\varphi:= \cq{Z}\op{\circ} \varphi'$.

 $\tx{\ref{lem:equivalences_of_universality:item3}}
 \Rightarrow \tx{\ref{lem:equivalences_of_universality:item1}}$: By
 \cite{PorstWockel08Higher-conneced-covers-of-topological-groups-via-categorified-central-extensions},
 $\im(\Theta_{\beta})$ generates $\pi_{2}(G)$. Thus the claim
 follows from
 \begin{equation*}
  \cq{Z}\op{\circ} \per_{\omega} \op{\circ} \Theta_{\beta}= \cq{Z}\op{\circ} \dd_{\op{gp}}F_{\omega,\beta}  =
  \dd_{\op{gp}}(F'+\tau \op{\circ} \psi)=\tau \op{\circ} (\Theta'+\dd_{\op{gp}}\psi)=
  \tau \op{\circ} p \op{\circ} \Theta_{\beta}.
 \end{equation*}
\end{proof}

\begin{proposition}\label{prop:universality}
 Let $(F',\Theta')$ be a generalised cocycle on $G$ that integrates
 $\omega$ (cf.\ Definition \ref{def:integration_of_cocycles}). Then
 there exists a unique $p\from \pi_{2}(G)\to A$ such that
 $p \op{\circ} \Theta_{\beta}=\Theta'+\dd_{\op{gp}}\psi$ for some $\psi\in C^{2}(G,A)$. Moreover,
 $\tau \op{\circ} p= \cq{Z}\op{\circ} \per_{\omega}$.
\end{proposition}

\begin{proof}
 Since $\Theta_{\beta}$ is universal for discrete groups, there exists a
 unique $p_{\Theta'}\in\Hom(\pi_{2}(G),A)$ such that
 $[p_{\Theta'} \op{\circ} \Theta_{\beta}]=[\Theta']$ (cf.\
 \cite{PorstWockel08Higher-conneced-covers-of-topological-groups-via-categorified-central-extensions}),
 which is equivalent to
 $p \op{\circ} \Theta_{\beta}=\Theta'+ \dd_{\op{gp}}\psi$ for some
 $\psi\in C^{2}(G,A)$. Set $p:=p_{\Theta'}$. We will show that
 $\tau \op{\circ} p= \cq{Z}\op{\circ} \per_{\omega}$ and recall the
 construction of $p_{\Theta'}$ for this sake.

 For $H$ an abelian group and an arbitrary cocycle $f\from G^{3}\to H$,
 vanishing on some identity neighbourhood $U\times U\times U$, we
 construct an $H$-valued \v{C}ech-2 cocycle as follows. Take $V\se U$ a
 symmetric open identity neighbourhood such that $V^{2}\se U$. Then the
 sets $V_{g}:=g V$ for $g\in G$ form an open cover of $G$ and
 \begin{equation*}
  \eta(f,V) _{g,h,k}\from V_{g}\cap V_{h}\cap V_{k}\to H,\quad x\mapsto
  -f(g,g^{-1}h,g^{-1}k)-f(g^{-1}h,h^{-1}k,k^{-1}x)
 \end{equation*}
 is smooth since $g^{-1}h$, $g^{-1}k$, $h^{-1}k$ are elements of $U$ if
 $V_{g}\cap V_{h}\cap V_{k}\neq \emptyset$. Moreover, it follows from
 $\dd_{\op{gp}}f(g,g^{-1}h,h^{-1}k,k^{-1}x)=0$ that
 \begin{equation*}
  \eta(f,V)_{g,h,k}(x)=-f(g,g^{-1}h,h^{-1}x)-f(h,h^{-1}k,k^{-1}x)+f(g,g^{-1}k,k^{-1}x),
 \end{equation*}
 showing that $\eta(f,V)_{g,h,k}$ constitutes a \v{C}ech 2-cocycle on
 $G$. The class $[\eta(f)]\in \check{H}^{2}(G,A)$ of this cocycle only
 depends on the equivalence class of $f$ in $H^{3}(G,A)$. Since $PG$ is
 contractible, this transgresses to an $H$-valued \v{C}ech 1-cocycle on
 $\Omega G$, giving rise to a covering $H\to \wh{\Omega G}\to \Omega G$
 of $\Omega G$. Choosing a base point in $\wh{\Omega G}$ turns its
 connected component $\wh{\Omega G}_{0}$ into a central extension
 $H\to \wh{\Omega G}_{0}\to \Omega G$ of Lie groups (cf.\ \cite[App.\
 2]{HofmannMorris98The-structure-of-compact-groups}). Thus there exists
 a covering morphism $P\from \wt{\Omega G}\to \wh{\Omega G}_{0}$, where
 $\wt{\Omega G}$ is the universal covering of $\Omega G$ (note that
 $\Omega G$ is connected since $G$ is assumed to be simply connected).
 The restriction of $P$ to the subgroup
 $\pi_{2}(G)\cong \pi_{1}(\Omega G)\se \wt{\Omega G}$ then gives the
 homomorphism $p_{f}$. In particular, we note that $p_{f}$ only depends
 on $[\eta(f)]$.

 From this it follows that
 $\tau \op{\circ} p= \cq{Z}\op{\circ} \per_{\omega}$ if and only if
 $[\eta(\tau \op{\circ} p \op{\circ} \Theta_{\beta})]=[\eta( \cq{Z}\op{\circ} \per_{\omega} \op{\circ} \Theta_{\beta})]$.
 In order to show the latter we assume that
 $\tau \op{\circ} p \op{\circ} \Theta_{\beta}$ and
 $\cq{Z}\op{\circ} \per_{\omega} \op{\circ} \Theta_{\beta}$ are smooth
 when restricted to $U\times U\times U$ and observe that
 $[p \op{\circ} \Theta_{\beta}]=[\Theta']$ implies
 $\eta(\tau \op{\circ} p \op{\circ} \Theta_{\beta},V)\sim \eta(\tau \op{\circ}\Theta',V)$.
 Now $\tau \op{\circ} \Theta'=\dd_{\op{gp}}F'$ implies
 \begin{gather*}
  \eta({\tau \op{\circ}  \Theta'},V)_{g,h,k}(x)=
  -\tau(  \Theta'(g,g^{-1}h,g^{-1}k)-
  \Theta'(g^{-1}h,h^{-1}k,k^{-1}x))=\\
  -\dd_{\op{gp}}F'(g,g^{-1}h,h^{-1}k)-
  \dd_{\op{gp}}F'(g^{-1}h,h^{-1}k,k^{-1}x)=\\
  F'(g,g^{-1}h)+F'(h,h^{-1}k)-F'(g,g^{-1}k)\\-\big(F'(g^{-1}h,h^{-1}x)+F'(h^{-1}k,k^{-1}x)-F'(g^{-1}k,k^{-1}x)\big).
 \end{gather*}
 This is equivalent to the cocycle
 \begin{equation}\label{eqn:universality1}
  V_{g}\cap V_{h}\cap V_{k}\ni x\mapsto -\big(F'(g^{-1}h,h^{-1}x)+F'(h^{-1}k,k^{-1}x)-F'(g^{-1}k,k^{-1}x)\big)
 \end{equation}
 since $V_{g}\cap V_{h}\ni x\mapsto F'(g,g^{-1}h)\in Z$ is a 1-cochain.
 Similarly,
 $\per_{\omega} \op{\circ} \Theta_{\beta}=\dd_{\op{gp}}F_{\omega,\beta}$
 implies that
 $\eta({\cq{Z}\op{\circ} \per_{\omega} \op{\circ} \Theta_{\beta}},V)$ is
 equivalent to the cocycle
 \begin{equation}\label{eqn:universality2}
  V_{g}\cap V_{h}\cap V_{k}\ni x\mapsto -\cq{Z}\big(F_{\omega,\beta}(g^{-1}h,h^{-1}x)+F_{\omega,\beta}(h^{-1}k,k^{-1}x)-F_{\omega,\beta}(g^{-1}k,k^{-1}x)\big).
 \end{equation}
 Since $F_{\omega,\beta}$ and $F'$ both integrate $\omega$, their
 restrictions to some open neighbourhood (which we may still assume to
 be $U$) are equivalent as local cocycles. Any local group coboundary
 between them gives rise to a coboundary between the \v{C}ech cocycles
 \eqref{eqn:universality1} and \eqref{eqn:universality2}.
\end{proof}

\begin{corollary}
 The generalised cocycle $(F_{\omega,\beta},\Theta_{\beta})$ from Remark
 \ref{rem:integrationInTheCaseOfADiscretePeriodGroup} is universal for
 generalised cocycles integrating $\omega$. This means that if
 $(F',\Theta')$ integrates $\omega$ (cf.\ Definition
 \ref{def:integration_of_cocycles}), then there exists some
 $p\in\Hom( \pi_{2}(G), A)$ and a morphism
 $(\varphi,\psi)\from(\cq{Z}\op{\circ} F_{\omega,\beta},p \op{\circ} \Theta_{\beta})\to (F',\Theta')$
 of generalised cocycles. Moreover, the existence of $(\varphi,\psi)$
 determines $p$ uniquely.
\end{corollary}

\section{Loop prolongations}

In this section we provide the minimal algebraic structure that the generalised
cocycles from the previous section yield. However, it will turn out that these
algebraic structures do not mix very well with the underlying smooth structures,
so we will go to slightly advanced algebraic structures in the next
section to treat smoothness issues appropriately.

\begin{definition}
 A \emph{loop} is a set $X$ together with a map
 $\mu\from X\times X\to X$, $(x,y)\mapsto \mu(x,y)=:x\cdot y$ and a
 distinguished element $e\in X$ such that $x\cdot e=e\cdot x=x$ for all
 $x\in X$ and such that the maps $\lambda_{x},\rho_{x}\from X\to X$,
 $\lambda_{x}(y)=x\cdot y$ and $\rho_{x}(y)=y\cdot x$ are bijective. A
 morphism between two loops $X$ and $Y$ is a map $\varphi\from X\to Y$
 satisfying $\varphi(x\cdot y)=\varphi(x)\cdot \varphi(y)$.
\end{definition}

\begin{remark}\label{rem:loop_prolongation}
 (cf.\ \cite{EilenbergMacLane47Algebraic-cohomology-groups-and-loops})
 Let $X$ be a loop. Then in general we do not have that
 $(x\cdot y)\cdot z$ equals $x\cdot(y\cdot z)$, but since
 $\rho_{(x\cdot y)\cdot z}$ is bijective, there exists a unique element
 $A(x,y,z)$ such that
 \begin{equation*}
  x\cdot (y\cdot z)=A(x,y,z)\cdot((x\cdot y)\cdot z).
 \end{equation*}
 We call the map $A\from X\times X\times X\to X$ the \emph{associator}
 of $X$ and sometimes refer to $A(x,y,z)$ as the associator of $(x,y,z)$.

 We have to add the following data and assumptions in order to come from
 general loops to group cohomology. Suppose that we have a homomorphism
 $\varphi\from X\to H$ for $H$ an arbitrary group. Then the kernel of
 $\varphi$ is a normal subloop\footnote{One should not get confused by
 the different notions of normal subloops, for instance in
 \cite{NagyStrambach02Loops-in-group-theory-and-Lie-theory}
 \cite{Pflugfelder90Quasigroups-and-loops:-introduction},
 \cite{Baer45The-homomorphism-theorems-for-loops} or the one used in
 \cite{EilenbergMacLane47Algebraic-cohomology-groups-and-loops}. One
 easily checks that they are all equivalent, the probably easiest one is
 presented, for instance, in
 \cite{KiechleKinyon04Infinite-simple-Bol-loops}. In particular, the
 usual kernel-epimorphism correspondence goes through, see
 \cite{Baer45The-homomorphism-theorems-for-loops} or \cite[p.\
 14]{NagyStrambach02Loops-in-group-theory-and-Lie-theory} and references
 given there.} of $X$ and since $H$ is a group, all associators are
 contained in $\ker(\varphi)$. If we choose a sub\emph{group} $A$ of
 $\ker(\varphi)$ and assume that
 \begin{align}
  &\varphi\tx{ is surjective},\label{eqn:loop_prolongation-1}\\
  &A(k,x,y)=A(x,k,y)=e\tx{ for all }x,y\in X\tx{ and }
  k\in \ker(\varphi),\label{eqn:loop_prolongation-2}\\
  &k\cdot A(x,y,z)=A(x,y,z)\cdot k\tx{ for all }x,y,z\in X\tx{ and }
  k\in\ker(\varphi),\label{eqn:loop_prolongation-3}\\
  &x\cdot a=a\cdot x\tx{ for all }x\in X\tx{ and }a\in A\label{eqn:loop_prolongation-4},
 \end{align}
 then we call $(A,\varphi\from X\to H)$ a (general) \emph{loop
 prolongation} of $H$ by $A$ (cf.\ \cite[Sect.\
 4]{EilenbergMacLane47Algebraic-cohomology-groups-and-loops}). In this
 case, one can actually show that $\ker(\varphi)$ is a sub\emph{group}
 of $X$
 \cite[(4.6)]{EilenbergMacLane47Algebraic-cohomology-groups-and-loops}
 and that $A$ and each associator is contained in $Z(\ker(\varphi))$. It
 has been shown in
 \cite{EilenbergMacLane47Algebraic-cohomology-groups-and-loops} that if
 we assume, in addition, that all associators are contained in $A$ and
 that $A(x,y,k)=e$ for each $k\in K$, then $A$ factors through a map
 from $H\times H\times H\to A$ which is in fact a cocycle. We will from
 now on assume that this is the case (and drop the adjective ``general''
 to indicate this in the notation).

 Note that the assignment of a 3-cocycles to a loop prolongation gives
 rise to a group isomorphism between (equivalence classes of) loop prolongations
 and $H^{3}(G,A)$
 (with respect to a suitably defined
 product, see
 \cite{EilenbergMacLane47Algebraic-cohomology-groups-and-loops}).
\end{remark}

\begin{lemma}\label{lem:loop_prolongation_from_generalised_cocycle}
 Let $(F,\Theta)$ be a generalised cocycle on the discrete group $H$
 with coefficients $A\se Z$, also discrete. Then
 $\mu((z,g),(w,h))=(z+w+F(g,h),gh)$ endows $Z\times H$ with the
 structure of a loop, which we denote by $Z\times_{F}H$. Moreover, if
 $q\from Z\to Z/A$ is the canonical quotient homomorphism, then
 $\varphi\from Z\times_{F}H\to (Z/A)\times_{(q \op{\circ} F)} H$,
 $(z,h)\mapsto (q(z),h)$ defines a loop prolongation of
 $(Z/A)\times_{(q \op{\circ} F)} H$ by $A$. The group 3-cocycle associated
 to this loop prolongation then equals $\Theta$.
\end{lemma}

\begin{proof}
 That $Z\times_{F}H$ defines a loop is directly checked from the
 definition, as well as condition \eqref{eqn:loop_prolongation-1},
 \eqref{eqn:loop_prolongation-3} and \eqref{eqn:loop_prolongation-4}.
 From \eqref{eqn:multiplication_on_central_extension} is follows
 immediately that $\varphi$ defines a homomorphism. Since
 \begin{multline*}
  ((z,g)(w,h))(v,k)=(z+w+v+F(g,h)+F(gh,k),ghk)=\\
  (\Theta(g,h,k),e)\big((z+w+v+F(h,k)+F(g,hk),ghk)\big)=\\
  (\Theta(g,h,k),e)\big((z,g)((w,h)(v,k))\big)
 \end{multline*}
 follows from $\dd_{\op{gp}}F=\Theta$, we have
 $A((z,g),(w,h),(v,k))=(\Theta(g,h,k),e)$. Thus
 \eqref{eqn:loop_prolongation-2} follows from the normalisation
 conditions of $(F,\Theta)$. From the above equation it is also clear
 that $\Theta$ is the group 3-cocycles associated to this loop
 prolongation.
\end{proof}

\begin{tabsection}
 The preceding lemma shows in particular that the integrating cocycle
 $(F_{\beta,\omega},\Theta_{\beta})$ from Remark
 \ref{rem:integrationInTheCaseOfADiscretePeriodGroup} gives rise to a
 loop prolongation and one might
 be tempted to incorporate smoothness assumptions into the game. But
 we shall see now that in general one does not have a smooth structure
 on $\fz\times_{F_{\omega,\beta}}G$ which has $\fz\times V$ (the subset
 on which $\mu$ already is smooth) as an open
 subset.
\end{tabsection}

\begin{example}\label{ex:Kac-Moody_group_cocycle}
 Let $G=C^{\infty}(S^{1},\SU_{2})$, $\fg=C^{\infty}(S^{1},\su_{2})$ and
 $\omega_{\sprod}$ be the Kac-Moody cocycle from
 \eqref{eqn:Kac-Moody-cocycle}. If we normalise $\sprod$ such that the
 left-invariant extension of
 $\langle[\mathinner{\cdot},\mathinner{\cdot}],\mathinner{\cdot}\rangle$
 is a generator of $H^{3}_{\op{dR}}(\SU_{2},\Z)$, then it follows from
 the calculation in the proof of \cite[Thm.\
 III.9]{MaierNeeb03Central-extensions-of-current-groups} that
 $\per_{{\omega_{\sprod}}}=\Z$. Thus
 $f:=\cq{U(1)} \op{\circ} F_{{\omega_{\sprod}},\beta}$ is a 2-cocycle
 (cf.\ Remark \ref{rem:integrationInTheCaseOfADiscretePeriodGroup}). We
 thus obtain a central extension of Lie groups $  U(1)\to \wh{G}\to G$
 with $\wh{G}:=U(1)\times_{f}G$. From \cite[Prop.\
 5.11]{Neeb02Central-extensions-of-infinite-dimensional-Lie-groups} it
 follows that the connecting homomorphism $\pi_{2}(G)\to \pi_{1}(U(1))$
 in the long exact homotopy sequence of this fibration is (up to the
 choice of a sign) the identity on $\Z$. This implies that
 $\pi_{2}(\wh{G})=0$.

 Now consider the loop $\R\times_{F} G$ with
 $F:=F_{{\omega_{\sprod}},\beta}$ from the previous lemma. If there
 existed a topology on $\R\times G$ having $\R\times V$ as an open
 subset (for $V$ as in Remark
 \ref{rem:integrationInTheCaseOfADiscretePeriodGroup}) and turning $\mu$
 into a globally smooth map, then the exact sequence
 $  \Z \to \R\times_{F}G\to \wh{G}$, induced by $\varphi$ as in the
 previous lemma, would define a locally trivial principal bundle over
 $\hat{G}$. In fact, the maps
 \begin{equation*}
  g\cdot V\ni x\mapsto (0,g)\cdot(0,g^{-1}\cdot x)\in \R\times G
 \end{equation*}
 would provide smooth local sections (it is here that we use the global smoothness of $\mu$). 
 Since this bundle is in fact a
 covering and $\wh{G}$ is simply connected, this covering would be
 trivial and $\wh{G}$ would be diffeomorphic to $\Z\times G$. But
 $\pi_{2}(G)=\Z$, a contradiction to the existence of a globally smooth
 extension of the locally smooth multiplication.
\end{example}

\begin{remark}
 It is always the case that a group which restricts to a local
 (analytic) Lie group possesses a compatible (global) smooth
 (analytic) structure, at least if the group is generated by the local
 group (see Theorem
 \ref{thm:globalisation-of-smooth-structures-on-groups} for the precise
 statement).

 It is a well-known fact that an analogous statement does not hold for
 loops. In fact, each finite-dimensional almost smooth loop restricts to
 a smooth local loop on some neighbourhood of $e$. If there existed a
 compatible smooth structure on this loop, then the identity would yield
 a diffeomorphism between this smooth structure and the almost smooth
 structure. This is due to the fact that a morphism between almost
 smooth loops is smooth if and only if it is so on an open neighbourhood
 of $e$. So each almost smooth loop which is not a smooth loop yields
 a counterexample to the generalisation of the introductory statement of
 this remark to loops. See \cite[Sect.\
 1.3]{NagyStrambach02Loops-in-group-theory-and-Lie-theory} for details.
 In fact, the situation is even worse, there exist one-dimensional
 analytic loops which may not be extended to a global analytic loop
 \cite[Lines before Rem.\
 IX.6.8]{HofmannStrambach90Topological-and-analytic-loops}.
\end{remark}

\begin{tabsection}
 As the previous discussion shows, we are forced to consider as
 integrating objects of $\omega$ loop prolongations, which are
 compatible with the smooth structure only in an identity neighbourhood:

\end{tabsection}

\begin{definition}
 A loop prolongation $(A,\varphi\from L\to H)$ is called \emph{locally
 smooth} if $L$ is endowed with the structure of a smooth local loop. With this
 we mean that there exists a subset $W$ containing the identity, which
 is endowed with some manifold structure such that
 $\left.L\right|_{W}:=(\mu^{-1}(W)\cap (W\times W),W,\mu,0)$ is a local
 loop and all structure maps are smooth (cf.\ \cite[Sect.\
 1.1]{NagyStrambach02Loops-in-group-theory-and-Lie-theory}). In
 particular, the manifold structure is part of the data. If, in
 addition, the associator of $(A,\varphi\from L\to H)$ vanishes on some
 identity neighbourhood (in $W\times W\times W$), then we call the loop
 prolongation \emph{locally associative}.
\end{definition}

\begin{tabsection}
 Of course, local smoothness and local associativity make sense for
 loops in general, but we shall use this concepts only for loops that
 are parts of a loop prolongation.
\end{tabsection}

\begin{lemma}
 If $(A,\varphi\from L\to H)$ is a locally smooth loop prolongation and
 $L$ is generated by some open $V\se W$ with $V\cap A=\{e\}$, then $L/A$
 carries a Lie group structure such that the quotient map
 $q\from L\to L/A$ restricts to a local diffeomorphism on some open
 neighbourhood of $e$.
\end{lemma}

\begin{proof}
 Since $V\cap A=\{e\}$, $q$ is injective on $V$ and we
 use it to endow $q(V)\se L/A$ with a manifold structure. Clearly, the
 group multiplication and inversion are locally smooth on $A/L$ with
 respect to this smooth structure. Since $V$ generates $L$ we have that
 $q(V)$ generates $L/A$ and the assertion follows from Theorem
 \ref{thm:globalisation-of-smooth-structures-on-groups}.
\end{proof}

\begin{remark}
 If $(A,\varphi\from L\to H)$ is a locally smooth and locally
 associative loop prolongation, then $\left.L\right|_{V}$ is a local Lie
 group for some open identity neighbourhood $V\se W$. This then gives
 rise to a Lie algebra $\cL(L)$, which is independent of the
 choice of $V$.
\end{remark}

\begin{proposition}\label{prop:integrating_lie_algebras_to_loop_prolongations}
 If $\fg$ is a Mackey-complete locally exponential Lie algebra, then
 there exists a locally smooth and locally associative loop prolongation
 such that the associated Lie algebra is isomorphic to $\fg$.
\end{proposition}

\begin{tabsection}
 Since the upcoming sections are independent of this result, there is no
 harm in postponing the proof until the end of Section
 \ref{sect:lies_third_theorem}.
\end{tabsection}

\section{2-groups}

\begin{tabsection}
 We now introduce 2-groups in oder to overcome the discrepancy between
 the globally defined algebraic structure and the locally given
 smoothness in the next section (cf.\ Remark
 \ref{rem:2-group-are-better-than-loop-prolongations}).
\end{tabsection}

\begin{definition}\label{def:2-groups}
 A (unital) \emph{2-group} is a small groupoid $\cG$, together with a
 \emph{multiplication functor} $\otimes \from \cG \times \cG\to \cG$, an
 \emph{inversion functor} $\ol{^{~}}\from \cG\to \cG$ and an object
 $\nelt$ (frequently identified with its identity morphism
 $\id_{\nelt}$), together with natural isomorphisms
 \begin{equation*}
  \alpha_{g,h,k}\from (g\otimes h)\otimes k\to g\otimes(h\otimes k)
 \end{equation*}
 for $g,h,k$ objects of $\cG$, called \emph{associators}. We require
 $\nelt=\ol{\nelt}$, $g\otimes \nelt=g=\nelt\otimes g$ and
 $g\otimes \ol{g}=\nelt=\ol{g}\otimes g$ on objects and morphisms and we
 require that
 \begin{equation*}
  \alpha_{g,h,k\otimes l} \op{\circ} \alpha_{g\otimes h,k,l}=
  (\id_{g}\otimes \alpha_{h,k,l}) \op{\circ}\alpha_{g,h\otimes k,l} \op{\circ}
  (\alpha_{g,h,k}\otimes \id_{l})
 \end{equation*}
 for all $g,h,k,l$.
 The last requirement is frequently referred to as \emph{pentagon
 identity}. Moreover, we require that $\alpha_{g,h,k}$ is an identity if
 one of $g$, $h$ or $k$ is $\nelt$ and that
 $\alpha_{g,\ol{g},g}=\id_{g}$ and
 $\alpha_{\ol{g},g,\ol{g}}=\id_{\ol{g}}$.

 A \emph{morphism} of (unital) 2-groups is a functor
 $\cF\from \cG \to\cG'$, together with natural isomorphisms
 $\beta_{g,h}\from  \cF(g)\otimes'\cF(h)\to  \cF(g\otimes h)$,
 satisfying $\cF(\nelt)=\nelt'$, $\cF(\ol{g})=\ol{\cF(g)}$,
 $\beta_{ g,\nelt}=\id_{\cF(g)}=\beta_{\nelt,g}$ and
 $\beta_{g,\ol{g}}=\nelt'=\beta_{\ol{g},g}$ for all objects $g$ of
 $\cG$. Here, we require
 \begin{equation*}
  \cF(\alpha(g,h,k)) \op{\circ} \beta_{g\otimes h,k} 
  \op{\circ} (\beta_{g, h}\otimes' \id_{k}) =
  \beta_{g,h\otimes k} \op{\circ} (\id_{\cF(g)}\otimes '\beta_{h,k} )
  \op{\circ} \alpha'(\cF(g),\cF(h),\cF(k)) 
 \end{equation*}
 for all $g,h,k$.
 Finally, a \emph{2-morphism} between morphisms $\cF$ and $\cF'$
 consists of natural isomorphisms $\gamma_{g}\from \cF(g)\to \cF'{(g)}$
 such that
 \begin{equation*}
  \gamma_{g\otimes h} \op{\circ}  \beta_{g,h}=
  \beta'_{g,h}\op{\circ} (\gamma_{g}\otimes ' \gamma_{h}).
 \end{equation*}
 The resulting 2-category is denoted by $\cat{2-Grp}$.
\end{definition}

\begin{tabsection}
 We took the clumsy notation for the inversion functor to distinguish it
 explicitly from the functor $\cG\to\cG^{\op{op}}$ that maps each
 morphism to its inverse morphism. Non-unital 2-groups involve
 additional natural isomorphisms, replacing the identities
 $g\otimes \nelt=g=\nelt\otimes g$ and
 $g\otimes \ol{g}=\nelt=\ol{g}\otimes g$, which are themselves required
 to obey certain coherence conditions. However, in our constructions
 these isomorphisms shall always be identities which is why we excluded
 them from our definition. This is also why we drop the adjective
 ``unital'' in the sequel.
\end{tabsection}

\begin{remark}
 2-groups are special kinds of monoidal categories (cf.\
 \cite{BaezLauda04Higher-dimensional-algebra.-V.-2-groups}), just as
 groups are special kinds of monoids. However, observe that a group is a
 monoid with a special \emph{property} (existence of inverses), while a
 2-group is a monoidal category with an additional \emph{structure} (an
 inversion functor). Our 2-groups are also examples of coherent
 2-groups, as considered in
 \cite{BaezLauda04Higher-dimensional-algebra.-V.-2-groups}.
\end{remark}

\begin{example}\label{ex:strict-2-groups-from-crossed-modules}
 A particular important class of examples form the so called
 \emph{strict 2-groups}. They can be characterised to be 2-groups, for
 which all natural isomorphisms in Definition \ref{def:2-groups} are the
 identities.

 Besides this description, strict 2-groups can be described by crossed
 modules as follows (in fact, the 2-category of strict 2-groups is
 2-equivalent to the 2-category of crossed modules, cf.\
 \cite{Porst08Strict-2-Groups-are-Crossed-Modules},
 \cite{Forrester-Barker02Group-Objects-and-Internal-Categories} or
 \cite{Loday82Spaces-with-finitely-many-nontrivial-homotopy-groups}). A
 \emph{crossed module} is a morphism of groups $\tau \from H\to G$ (for
 $G$ now an arbitrary group), together with an automorphic action of $G$
 on $H$ such that
 \begin{align}
  \tau (g.h)&=g\cdot \tau (h)\cdot g^{-1}\label{eqn:crossed-module-equivariance}\\
  \tau (h).h'&= h\cdot h'\cdot h^{-1}.\label{eqn:crossed-module-peiffer}
 \end{align}
 Note that these two equations force $\ker(\tau)$ to be central in $H$
 and $\im(\tau)$ to be normal in $G$. From this one can build up a
 2-group $\cG$ as follows. The objects are $G$, the morphisms are
 $H\rtimes G$ and the structure maps are given by $s(h,g)=g$,
 $t(h,g)=\tau (h)\cdot g$, $\id_{g}=(e,g)$,
 $(h',\tau (h)\cdot g)\circ (h,g)= (h'h,g)$. The identity object is $e$
 and the multiplication and inversion functor are given by
 multiplication and inversion on the groups $G$ and $H\rtimes G$. Then
 \eqref{eqn:crossed-module-equivariance} and
 \eqref{eqn:crossed-module-peiffer} ensure that this defines a functor
 with the desired properties if we set the associator from
 Definition \ref{def:2-groups} to be the identity.
\end{example}

\begin{example}
 We obtain a slightly weaker version of the previous example if we are
 given instead of a crossed module a loop prolongation
 $(A,\varphi\from L\to G)$. From this we construct an (in general
 non-strict) 2-group as follows. The set of objects is $L$, the set
 of morphisms is $A\times L$ and the structure maps are given by
 $s(a,x)=x$, $t(a,x)=a\cdot x$ and $(a,b\cdot y){\circ}(b,y)=(a+b,y)$.
 Then we set
  $x\otimes y=x\cdot y$ and $(a,x)\otimes (b,y)=(a\cdot b,x\cdot y)$,
 which clearly defines a functor. Since the loop $L$ may fail to be
 associative, this only defines a 2-group if we introduce the
 associators $\alpha_{x,y,z}=(A(x,y,z),(x\cdot y)\cdot z)$.
 One readily checks that this defines a 2-group with the aid of the
 axioms of a loop prolongation from Remark \ref{rem:loop_prolongation}.
\end{example}

\begin{tabsection}
 We finish this section with a few constructions that will be important
 later on.
\end{tabsection}

\begin{remark}\label{rem:constructions_from_2_groups}
 Each 2-group comes along with a couple of natural groups associated to
 it.
 \begin{itemize}
  \item The set of isomorphism classes $\pi_{0}(\cG)$ of $\cG$. Since
        $\otimes$ is a functor, it induces a map
        $\pi_{0}(\cG)\times \pi_{0}(\cG)\to\pi_{0}(\cG)$. This clearly
        defines a group multiplication for isomorphic objects in $\cG$
        become equal in $\pi_{0}(\cG)$.
  \item The set $\cG_{\nelt}$ of morphisms in the full subcategory of
        $\cG$, generated by $\nelt$. On $\cG_{\nelt}$, we define a map
        \begin{equation*}
         \cG_{\nelt}\times \cG_{\nelt}\to \cG_{\nelt},\quad
         (g,h)\mapsto g\otimes h.
        \end{equation*}
        If we assume that $\alpha_{g,h,k}$ is an identity
		if one of $g$, $h$ or $k$ is isomorphic to $\nelt$,%
        \footnote{This should follow from coherence, but we were not
        able to find a reference for it. However, all 2-groups that we
        encounter in this article obey this condition} then this defines
        an associative multiplication on $\cG_{\nelt}$. Since
        $f\in\cG \Leftrightarrow \ol{f}\in \cG_{\nelt}$, this is in fact
        a group.
  \item The source and target fibres $s^{-1}(\nelt)$ and $t^{-1}(\nelt)$
        are a subgroup of $\cG_{\nelt}$.
  \item The endomorphisms
        $\pi_{1}(\cG):=\End(\nelt)=s^{-1}(\nelt)\cap t^{-1}(\nelt)$ of
        $\nelt$ form a subgroup of $\cG_{\nelt}$.
 \end{itemize}
\end{remark}
\section{Lie 2-groups}

\begin{tabsection}
 In this section we shall elaborate on our concept of smoothness on
 2-groups. Note the subtlety that we call our objects of main interest
 \emph{Lie} 2-groups, so we emphasise their Lie-theoretic
 interpretation. Other authors calls their corresponding objects
 \emph{smooth} 2-groups to put an emphasis on their properties as
 generalisations of smooth manifolds.

 Since our 2-groups are internal to sets (for they are assumed to be
 small categories), it seems to be natural to work internal to manifolds
 (i.e., require sets to be manifolds and maps to be smooth), but this
 turns out to be too restrictive. The perspective to Lie groups that we
 will follow for our notion of Lie 2-groups is that a Lie group is a
 group with a locally smooth group multiplication. We make this precise
 in the following theorem. Note that Lie 2-groups make sense for
 smooth spaces in a more general setting than just locally convex
 manifolds, but to stay clear and brief we will stick to the
 manifold case.
\end{tabsection}

\begin{theorem}\label{thm:globalisation-of-smooth-structures-on-groups}
 Let $G$ be a group, $U\se G$ be a subset containing $e$  and let $U$ be
 endowed with a manifold structure. Moreover, assume that there exists
 $V\se U$ open such that
 \begin{enumerate}
        \renewcommand{\labelenumi}{\theenumi}
        \renewcommand{\theenumi}{\roman{enumi})}
  \item \label{thm:globalisation:item1} $e\in V$, $V=V^{-1}$ and
        $V\cdot V\se U$,
  \item \label{thm:globalisation:item2} the maps
        $V\times V\ni (g,h)\mapsto gh \in U$ and
        $V\ni g\mapsto g^{-1}\in V$ are smooth,
  \item \label{thm:globalisation:item3} $V$ generates $G$ (as a monoid
        or, equivalently, as a group).
 \end{enumerate}
 Then there exists a unique Lie group structure on $G$ such that the
 inclusion $U\hookrightarrow G$ is a diffeomorphism on some open identity
 neighbourhood. In particular, $V\hookrightarrow G$ is a diffeomorphism onto its
 open image and any other choice of $V$ satisfying the above conditions
 gives the same smooth structure on $G$.
\end{theorem}

\begin{proof}
 The proof is standard, see for instance \cite[Prop.\
 III.1.9.18]{Bourbaki98Lie-groups-and-Lie-algebras.-Chapters-1--3}.
 However, we shall repeat the essential parts to illustrate the general
 idea.

 Let $W\se V$ be open such that $e\in W$, $W\cdot W\se V$ and
 $W=W^{-1}$. Then we transport the smooth structure from $W$ to $gW$ by
 left translation $\lambda_{g}\from W\to gW$ (i.e. we \emph{define}
 $\lambda_{g}$ to be a diffeomorphism). This is well-defined since for
 $gW\cap hW\neq \emptyset$ we have $h^{-1}g\in V$ so that the coordinate
 change
 \begin{equation*}
  \lambda_{g}^{-1}(gW\cap hW)\ni x \mapsto \lambda_{h^{-1}g}(x)\in \lambda_{h}^{-1}(gW\cap hW)
 \end{equation*}
 is smooth by \ref{thm:globalisation:item2}. In particular, $V\se G$ is
 open and $V\hookrightarrow G$ is  a diffeomorphism onto its image.

 To verify that the group multiplication is smooth, we first observe
 that for each $h\in G$ there exists $W_{h}\se V$ open with $e\in W_{h}$
 such that $h^{-1}W_{h}h\se V$ and $x\mapsto h^{-1}xh$ is smooth. In
 fact, the set of all $h\in G$ such that $W_{h}$ exists forms a sub
 monoid containing $V$, which equals $G$ by
 \ref{thm:globalisation:item3}. Thus,
 \begin{equation*}
  gW_{h}\times hW\ni (x,y)\mapsto xy=\lambda_{gh}\big((h^{-1}\cdot \lambda_{g}^{-1}(x)\cdot h)
  \cdot \lambda_{h}^{-1}(y)\big)\in ghV
 \end{equation*}
 is smooth. A similar argument shows that inversion is also smooth.

 If $G$ is endowed with a Lie group structure such that
 $U\hookrightarrow G$ restricts to a diffeomorphism on $V'$, then the
 restriction of $\id_{G}$ is smooth on $V'\cap V$. Since a
 homomorphism between Lie groups is smooth if and only if it so on an
 identity neighbourhood, this shows that $\id_{G}$ is in fact a
 diffeomorphism. This applies in particular to a possibly different choice of $V$.
\end{proof}

\begin{tabsection}
 Note that the previous theorem tells us that the group structure
 determines the global topology, as soon as the local topology is fixed.
 It also says that a Lie group may equally well be defined as a group
 $G$, together with the smooth structure on $U$ such that $V\se U$ open
 with the corresponding properties exist. This is a very familiar
 pattern in Lie theory and we shall take this perspective when defining
 {Lie} 2-groups below. We have already seen that a statement
 corresponding to the preceding theorem is not valid for loops and we
 shall see below that non-strict 2-groups yet have a different
 behaviour.

 The following example illustrates an important application of the
 preceding theorem.
\end{tabsection}

\begin{example}\label{ex:cocycle-for-the-universal-covering}
 Let $G$ be an arbitrary connected Lie group and let
 $f\from G\times G\to Z$ be a 2-cocycle, which is smooth on some
 identity neighbourhood $U \times U$. This gives a group $Z\times_{f}G$
 as in \eqref{eqn:multiplication_on_central_extension}. Taking $V\se U$
 open with $e\in V$, $V^{-1}=V$ and $V\cdot V\se U$ shows that this
 multiplication is smooth when restricted to $(V\times G)^{2}$ (a
 similar argument works for the inversion, since
 $(a,g)^{-1}=(-a-f(g,g^{-1}),g^{-1})$). Thus Theorem
 \ref{thm:globalisation-of-smooth-structures-on-groups} yields a Lie
 group structure on $Z\times_{f}G$. This turns $Z\to Z\times_{f}G\to G$
 into a central extension of Lie groups, possessing
 $U\ni x\mapsto (0,x)\in Z\times_{f}G$ as smooth local section . This
 construction applies in particular to the 2-cocycle
 $\cq{U(1)}\op{\circ}F_{\omega_{\sprod},\beta}$ from Example
 \ref{ex:Kac-Moody_group_cocycle}.

 Another important application of this construction is the following.
 Let $PG$ be the space of continuous pointed paths in $G$ and $G\to PG$,
 $g\mapsto \alpha_{g}$ be a section of the map that evaluates in $1$
 (the compact-open topology defines in fact a Lie group topology on $PG$
 with Lie algebra $P\fg$). Moreover, assume that $\alpha$ is smooth on
 some identity neighbourhood. Then
 \begin{equation*}
  \Theta_{\alpha} (g,h)= [\alpha_{g}+g.\alpha_{h}-\alpha_{gh}]\in H_{1}(G)\cong \pi _{1}(G)
 \end{equation*}
 is a group cocycle for the universal covering group
 $\wt{G}\cong PG/(\Omega G)_{0}$. In fact, reconstructing cocycles from
 the central extension $\pi_{1}(G)\to \wt{G}\to G$ as in \cite[Prop.\
 4.2]{Neeb02Central-extensions-of-infinite-dimensional-Lie-groups} shows
 that $\Theta_{\alpha}$ is equivalent to each of those cocycles.
\end{example}

We now turn to the development of our notion of Lie 2-group. At first, 
we need the concept of a smooth 2-space.

\begin{definition}\label{def:2-spaces}
 A \emph{smooth 2-space} is a (possibly infinite-dimensional) Lie
 groupoid. This means that it is a groupoid $\cC$ such that $\cC_{0}$
 and $\cC_{1}$ are endowed with smooth manifold structures, source and
 target maps are smooth surjective submersions\footnote{Surjective
 submersion in the strong sense that it is a projection in local
 coordinates. This ensures in particular that the space of composable
 morphisms $\cC_{1}\times_{\cC_{0}}\cC_{1}$ is a smooth manifold (cf.\
 \cite[App.\
 A]{NikolausSachseWockel11A-Smooth-Model-for-the-String-Group}).} and
 the other structure maps are smooth.

 A \emph{smooth functor} between smooth 2-spaces is a functor whose
 respective maps on objects and morphisms are smooth. A \emph{smooth
 natural transformation} between smooth functors is a natural
 transformation such that the corresponding map from objects to
 morphisms is smooth. The resulting 2-category is denoted by
 $\cat{2-Man}$. Two smooth 2-spaces $\cC$ and $\cD$ are  called
 \emph{isomorphic} if there exist smooth functors $F\from \cC\to\cD$ and
 $G\from\cD\to\cC$ such that $F \op{\circ}G=\id_{\cD}$ and
 $G \op{\circ} F=\id_{\cC}$ (on the nose).
\end{definition}

\begin{tabsection}
 The correct notion of equivalence of smooth 2-spaces is \emph{Morita
 equivalence}, a more involved notion than the naive one. We shall not
 need this notion in this article. The previous definition takes Lie
 groupoids as internal categories in the category of locally convex
 manifolds. For more general purposes as we are aiming for here this
 definition is insufficient. The category of locally convex manifolds
 has bad categorical properties: it lacks pull-backs, quotients
 and internal homs, also when restricting to finite-dimensional ones.
 This can be remedied by introducing smooth 2-spaces as categories
 internal to smooth spaces (also called Chen- or diffeological spaces),
 for which we refer to
 \cite{BaezHoffnung08Convenient-Categories-of-Smooth-Spaces}.

 The following Proposition is the equivalent statement to the previous
 theorem for \emph{strict} 2-groups. In order to state it we fist have
 to introduce the following notation.
\end{tabsection}

\begin{remark}
 Let $\cC$ be a small monoidal category (e.g., a 2-group) and
 $\cV\se \cC$ be a subcategory. Then the \emph{monoidal subcategory
 generated by $\cV$} is the smallest monoidal subcategory containing
 $\cV$. Since intersections of monoidal subcategories are in turn
 monoidal subcategories, there is a unique smallest monoidal
 subcategory, which we denote by $\langle\cV\rangle$.
\end{remark}

\begin{proposition}\label{prop:strict-lie-2-groups}
 Let $\cG$ be a strict 2-group, $\cU$ be a full subcategory containing
 $\nelt$ and let $\cU$ be endowed with the structure of a smooth
 2-space. Moreover, assume that there exists a full subcategory
 $\cV\se\cU$ such that $\cV_{0}$ is open in $\cU_{0}$,
 \begin{enumerate}
        \renewcommand{\labelenumi}{\theenumi}
        \renewcommand{\theenumi}{\roman{enumi})}
  \item \label{prop:strict-lie-2-groups:item1} $\nelt\in\cV$,
        $\ol{\cV}=\cV$ and $\cV\otimes\cV\se\cU$,
  \item \label{prop:strict-lie-2-groups:item2} the functors
        $\left.\otimes\right|_{\cV\times\cV}\from\cV\times\cV\to\cU$ and
        $\left.\ol{^{~}}\right|_{\cV}\from\cV\to\cV$ are smooth,
  \item \label{prop:strict-lie-2-groups:item3} $\langle\cV\rangle=\cG$.
 \end{enumerate}
 Then there exists on $\cG$ the structure of a smooth 2-space such that
 $\ol{^{~}}$ and $\otimes$ are smooth functors and the inclusion
 $\cU\hookrightarrow \cG$ restricts to an isomorphism on some full
 subcategory $\cV'$ with $\cV'_{0}\se\cU$ open. Moreover, the smooth
 structure on $\cG$ is unique with respect to these properties.


 
%
\end{proposition}

\begin{tabsection}
 The following proof relies heavily on the fact that strict 2-groups
 are actually category objects internal to the category of groups, i.e.,
 spaces of objects, morphisms and composable morphisms are groups and
 all structure maps are group homomorphisms (cf.\
 \cite{Porst08Strict-2-Groups-are-Crossed-Modules},
 \cite{BaezLauda04Higher-dimensional-algebra.-V.-2-groups},
 \cite{Forrester-Barker02Group-Objects-and-Internal-Categories}).
\end{tabsection}

\begin{proof}
 It is clear from the assumptions that $\cU_{0}\se\cG_{0}$ is a subset
 containing the identity which is endowed with a manifold structure and
 $\cV_{0}\se\cU_{0}$ is an open subset satisfying the assumptions from
 Theorem \ref{thm:globalisation-of-smooth-structures-on-groups}. This
 yields a smooth structure on $\cG_{0}$. On morphisms, we have the
 smooth manifold $\cU_{1}=s^{-1}(\cU_{0})\cap t^{-1}(\cU_{0})$
 containing $\id_{\nelt}$ and the open subset
 $\cV_{1}=s^{-1}(\cV_{0})\cap t^{-1}(\cV_{0})$. Now $\cV_{1}$ generates
 $\cG_{1}$ by assumption, so Theorem
 \ref{thm:globalisation-of-smooth-structures-on-groups} also yields a
 smooth structure on $\cG_{1}$. Moreover, $s$ and $t$ are submersions
 since they are so on $\cU_{1}$ and the smoothness of $\ol{^{~}}$ and
 $\otimes$ is part of the conclusion of Theorem 
 \ref{thm:globalisation-of-smooth-structures-on-groups}.
 The uniqueness assertion also follows immediately from the one in 
 Theorem \ref{thm:globalisation-of-smooth-structures-on-groups}.
\end{proof}

\begin{tabsection}
 It might look quite promising to expect a similar construction of
 globally smooth 2-group structures from locally ones also in the case
 of non-strict 2-groups, but this expectation is too optimistic. In fact, the
 following lemmata show that the topology of $\cG_{1}$ splits as a product
 in this case into the part that comes from the identities and the arrow
 part.
\end{tabsection}

\begin{lemma}
 Let $\cG$ be a 2-group which is also a smooth 2-space such that the
 functors $\ol{^{~}}$, $\otimes$  and the associator are smooth. Then
 $s^{-1}(\nelt)$ is a submanifold of $\cG_{1}$ and in particular a Lie
 group. Moreover,
 \begin{equation*}
  s^{-1}(\nelt)\times \cG_{1}\to \cG_{1},\quad (a,f)\mapsto a\otimes f
 \end{equation*}
 defines a smooth action. Moreover, this action is free,
 $\cG_{1}/s^{-1}(\nelt)\cong \cG_{0}$ as manifolds and $\cG_{1}$ is
 a trivial smooth principal $s^{-1}(\nelt)$-bundle.
\end{lemma}

\begin{proof}
 Since inverse images of points under submersions are submanifolds, $s^{-1}(\nelt)$ is a Lie group.
 That the action is free follows from
 \begin{equation*}
  a\otimes f=b\otimes f\Rightarrow a\otimes \underbrace{(f\otimes 
  f^{-1})}_{=\nelt}=b\otimes\underbrace{(f\otimes f^{-1})}_{=\nelt}\Rightarrow a=b.
 \end{equation*}
 The source map $\cG_{1}\to \cG_{0}$ is $s^{-1}(\nelt)$-invariant and thus induces a smooth
 map $\cG_{1}/s^{-1}(\nelt)\to\cG_{0}$. The identity map
 $\cG_{0}\to\cG_{1}$ provides a smooth global section, proving the claim.
\end{proof}

\begin{lemma}\label{lem:triviality-of-associator-in-the-etale-case}
 Let $\cG$ be a 2-group which is also a smooth 2-space such that the
 functors $\ol{^{~}}$, $\otimes$  and the associator are smooth. If
 $s^{-1}(\nelt)$ is discrete in the induced topology then the arrow part
 \begin{equation*}
  (\cG_{0})^{3}\ni(g,h,k)\mapsto \alpha(g,h,k)\otimes \id_{\ol{(g\otimes h)\otimes k}}\in s^{-1}(\nelt)
  \se \cG_{1}
 \end{equation*}
 of $\alpha$ is locally constant.
\end{lemma}

\begin{proof}
 This is due to the fact that smooth maps between locally convex manifolds
 are in particular continuous.
\end{proof}

\begin{tabsection}
 The importance of the previous lemma is that we are forced to work with
 2-groups with $s^{-1}(\nelt)$ discrete if we want a reasonable
 interpretation of a Lie 2-group integrating an ordinary Lie algebra
 (cf.\ Section \ref{sect:generalised-central-extensions}). Thus it
 illustrates the limitation on building 2-groups with too many
 smoothness conditions. However, locally smoothness of the group
 multiplication is essential for passing from Lie 2-groups to Lie
 2-algebras. In view of Theorem
 \ref{thm:globalisation-of-smooth-structures-on-groups} and Proposition
 \ref{prop:strict-lie-2-groups}, the following definition seems to be
 natural.
\end{tabsection}

\begin{definition}\label{def:Lie-2-group}
 A \emph{Lie 2-group} is a tuple $(\cG,\mc{U})$ such that $\cG$ is a
 2-group and $\mc{U}$ is a full subcategory containing $\nelt$, which is
 endowed with the structure of a smooth 2-space. Moreover, there has to
 exist a full subcategory $\cV\se \cU$ with $\cV_{0}\se \cU_{0}$ open
 such that
 \begin{enumerate}
        \renewcommand{\labelenumi}{\theenumi}
        \renewcommand{\theenumi}{\roman{enumi})}
  \item $\nelt\in \cV$, $\ol{\cV}=\cV$ and $\cV\otimes\cV\se\cU$,
  \item  the functors
        $\left.\otimes\right|_{\cV\times\cV}\from\cV\times\cV\to\cU$ and
        $\left.\ol{^{~}}\right|_{\cV}\from\cV\to\cV$ are smooth,
  \item $\langle\cV\rangle=\cG$.
 \end{enumerate}
 When working with Lie 2-groups we will sometimes not mention $\cU$
 explicitly if it is understood. A \emph{morphism} of Lie 2-groups is a
 morphism of the underlying 2-groups such that the constituting functors
 and natural transformations restrict (and co-restrict) to smooth
 functors and natural transformations on some of the subcategories $\cV$
 from above. Likewise, \emph{2-morphisms} between morphisms of Lie 2-groups are
 defined.
\end{definition}

\begin{tabsection}
 For the case of a non-strict 2-group our notion of a Lie 2-group does
 \emph{not} fit with the notion used in \cite[Def.\
 27]{BaezLauda04Higher-dimensional-algebra.-V.-2-groups}, where the
 functors and natural transformation defining the 2-group structure are
 required to be globally smooth. For the reasons explained above we find
 our notion more natural in the non-strict case (see also \cite[Thm.\
 59]{BaezLauda04Higher-dimensional-algebra.-V.-2-groups}). The
 observation that the concept introduced in
 \cite{BaezLauda04Higher-dimensional-algebra.-V.-2-groups} is sometimes
 inadequate has also been made by Henriques in \cite[Sect.\
 9]{Henriques08Integrating-Lsb-infty-algebras} (the latter notion of
 smooth 2-groups has also been used in
 \cite{Schommer-Pries10Central-Extensions-of-Smooth-2-Groups-and-a-Finite-Dimensional-String-2-Group}). However,
 the previous definition covers strict Lie 2-groups by Proposition
 \ref{prop:strict-lie-2-groups} (cf.\
 \cite{BaezCrans04Higher-dimensional-algebra.-VI.-Lie-2-algebras},
 \cite{BaezLauda04Higher-dimensional-algebra.-V.-2-groups},
 \cite{Wockel09Principal-2-bundles-and-their-gauge-2-groups}). Moreover,
 it leads to a locally smooth group structure on the group
 $\pi_{0}(\cG)$ (in the appropriate category where $\pi_{0}(\cG)$ is a
 smooth space) and thus a globally smooth group structure thereon by
 Theorem \ref{thm:globalisation-of-smooth-structures-on-groups}. We thus
 may interpret our Lie 2-groups as a categorified version of a Lie
 group, much like Lie groupoids are categorified manifolds.
\end{tabsection}

\begin{remark}\label{rem:2-group-are-better-than-loop-prolongations}
 It is the fact that 2-groups form a 2-category  which makes them more
 interesting then loop prolongations. Thus 2-groups allow for the notion
 of equivalence, which is weaker than isomorphism. Moreover, the
 category of smooth 2-spaces also allows for a weaker notion of
 morphisms, the so-called Hilsum-Skandalis morphisms (also called spans
 or weak morphisms or bibundles) between Lie groupoids. Making use of
 this concept, one can show that certain Lie 2-groups in the above sense
 are in fact equivalent to weak group objects in the weak 2-category of
 Lie groupoids with morphisms the aforementioned Hilsum-Skandalis
 morphisms (the latter are called \emph{stacky Lie groups} in
 \cite{Blohmann08Stacky-Lie-groups}). This applies in particular to the
 2-groups that we will construct in Theorem
 \ref{thm:integrating-central-extensions} (cf.\ Remark
 \ref{rem:prospect:differential_geometry_of_generalised_central_extensions}
 and
 \cite{ZhuWockel10Integrating-central-extensions-of-Lie-algebras-via-group-stacks})
 and to the one from the following example (cf.\
 \cite{Schommer-Pries10Central-Extensions-of-Smooth-2-Groups-and-a-Finite-Dimensional-String-2-Group}).
\end{remark}

Although it does not play a r{\^o}le in the main theme of the paper, we
present the following example for it illustrates the use and simplicity of
our concept of Lie 2-groups.

\begin{example}\label{ex:string-2-group}
 Let $G$ be compact, simple and simply connected. Then
 $\langle[\mathinner{\cdot},\mathinner{\cdot}],\mathinner{\cdot}\rangle$
 is a Lie-algebra 3-cocycle on $\fg$, where
 $\langle\mathinner{\cdot},\mathinner{\cdot}\rangle$ denotes the Killing
 form of $\fg$. Under this assumptions the left-invariant extension
 $\langle[\mathinner{\cdot},\mathinner{\cdot}],\mathinner{\cdot}\rangle^{l}$
 is a generator of $H^{3}_{\op{dR}}(G,\Z)\cong \Z$. Consequently, the
 corresponding period homomorphism
 \begin{equation*}
  \per_{\langle[\mathinner{\cdot},\mathinner{\cdot}],\mathinner{\cdot}\rangle}\from \pi_{3}(G)\to \R,\quad
  [\sigma]\mapsto \int_{\sigma}\langle[\mathinner{\cdot},\mathinner{\cdot}],\mathinner{\cdot}\rangle^{l}
 \end{equation*}
 (cf.\ \cite[Def.\
 V.2.12]{Neeb06Towards-a-Lie-theory-of-locally-convex-groups}) has image
 $\Z$. Now the maps
 \begin{equation*}
  \alpha\from G\to C^{\infty}(\Delta^{(1)},G)\quad\text{ and }\quad\beta\from G^{2}\to C^{\infty}(\Delta^{(2)},G)
 \end{equation*}
 from Lemma \ref{lem:homology-cocycle-actual-choice-for-a-chart} are
 accompanied by an additional map
 $\gamma\from G^{3}\to C^{\infty}(\Delta^{(3)},G)$ satisfying
 \begin{equation}\label{eqn:coherent-choice-of-triangles-1}
  \partial \gamma_{g,h,k}=
  g.\beta_{h,k}-\beta_{gh,k}+\beta_{g,hk}-\beta_{g,h}
 \end{equation}
 for the above assumptions on $G$ imply that it is 2-connected.
 Moreover, one can choose $\gamma_{g,h,k}$ to depend smoothly on
 $(g,h,k)$ in some neighbourhood of $(e,e,e)$, similar to $\alpha$ and $\beta$ in equations
 \eqref{eqn:cocycle-from-chart-1} and \eqref{eqn:cocycle-from-chart-2}.
 Then we set
 \begin{equation*}
  \varphi_{\gamma}\from G^{3}\to U(1)=\R/\Z,\quad
  (g,h,k)\mapsto\exp\left( \int_{\gamma_{g,h,k}}\langle[\mathinner{\cdot},\mathinner{\cdot}],\mathinner{\cdot}\rangle^{l}\right)
 \end{equation*}
 where $\exp\from \R\to \R/\Z$ is the canonical quotient map. This
 defines a 3-cocycle since
 \begin{equation*}
  \dd_{\op{gp}}(\varphi_{\gamma})(g,h,k,l)=\int_{(\dd_{\op{gp}}\gamma) (g,h,k,l)}\langle[\mathinner{\cdot},\mathinner{\cdot}],\mathinner{\cdot}\rangle^{l}\in\Z,
 \end{equation*}
 which in turn follows from
 $(\dd_{\op{gp}}\gamma) (g,h,k,l)\in Z_{3}(G)$, similar to Remark
 \ref{rem:integrationInTheCaseOfADiscretePeriodGroup}.

 This is in fact a locally smooth 3-cocycle and by \cite[Thm.\
 V.2.6]{Neeb06Towards-a-Lie-theory-of-locally-convex-groups} we may
 differentiate this cocycle to get back the Lie algebra 3-cocycle
 $\langle[\mathinner{\cdot},\mathinner{\cdot}],\mathinner{\cdot}\rangle$.
 Similar to the argument from Remark \ref{rem:dependence-on-choices} we
 see that the cohomology class of $\varphi_{\gamma}$ does not depend on
 the choice of $\gamma$, as long as
 \eqref{eqn:coherent-choice-of-triangles-1} is fulfilled (that is why we
 drop the subscript from now on). From this cocycle we get a 2-group
 $\cG_{G}$ by setting $(\cG_{G})_{0}$ to be $G$ and $(\cG_{G})_{1}$ to
 be $U(1)\times G$ with source and target map equal to the projection to
 $G$ and composition of morphism induced by the group structure on
 $U(1)$. The monoidal structure is given by the group multiplication in
 $G$ (on objects) and in $U(1)\times G$ (on morphisms) and the
 associator is given by $\alpha_{g,h,k}=(\varphi(g,h,k),ghk)$.

 Now $\varphi$ is smooth on $U\times U\times U$ for $U\se G$ some open
 identity neighbourhood. Choosing some $V\se U$ open with $e\in V$,
 $V=V^{-1}$ and $V^{2}\se U$ one directly checks that all requirements
 from Definition \ref{def:Lie-2-group} are satisfied. This turns
 $\cG_{G}$ into a Lie 2-group.

 The natural generalisation of the differentiation process described in
 the next section enables one to differentiate $\cG_{G}$ to a Lie
 2-algebra. Since the differentiation of $\varphi$ is the 3-cocycle
 $\langle[\mathinner{\cdot},\mathinner{\cdot}],\mathinner{\cdot}\rangle$,
 this Lie 2-algebra is the non-strict Lie 2-algebra determined by the
 3-cocycle
 $\langle[\mathinner{\cdot},\mathinner{\cdot}],\mathinner{\cdot}\rangle$
 (cf.\
 \cite{BaezCrans04Higher-dimensional-algebra.-VI.-Lie-2-algebras}). This
 is (one model for) the  string Lie 2-algebra, and the Lie 2-group
 $\cG_{G}$ would thus be another model for the string 2-group (cf.\
 \cite{Stolz96A-conjecture-concerning-positive-Ricci-curvature-and-the-Witten-genus},
 \cite{StolzTeichner04What-is-an-elliptic-object},
 \cite{BaezCransStevensonSchreiber07From-loop-groups-to-2-groups},
 \cite{Henriques08Integrating-Lsb-infty-algebras},
 \cite{Schommer-Pries10Central-Extensions-of-Smooth-2-Groups-and-a-Finite-Dimensional-String-2-Group} or
 \cite{NikolausSachseWockel11A-Smooth-Model-for-the-String-Group}). There is
 certainly much more to say about this Lie 2-group (cf.\ Remark
 \ref{rem:prospect:differential_geometry_of_generalised_central_extensions}),
 but this lies beyond the scope of the present paper.
\end{example}

\section{Categorified central extensions and \'etale Lie 2-groups}
\label{sect:generalised-central-extensions}

\begin{tabsection}
 In this section we define central extensions of (Lie) 2-groups and show
 how they arise from generalised cocycles (cf.\ Remark
 \ref{rem:from-generalised-cocycles-to-2-groups}), for the more general
 setting see \cite{Breen92Theorie-de-Schreier-superieure}. In
 particular, we seek for an interpretation of the integrating cocycle
 from Theorem \ref{thm:integration-of-cocycles} in terms of central
 extensions.

 In the first part of the section we shall describe the route from
 generalised cocycles to generalised central extensions. The second part
 elaborates on the basic notions of Lie theory for Lie 2-groups and
 central extensions. Our perspective will be that central extensions are
 described by group cohomology, see  \cite[Sect.\
 IV.3]{MacLane63Homology} for ordinary groups,
 \cite{AgoreMilitaru08Crossed-product-of-groups.-Applications} for
 generalisations and
 \cite{Neeb02Central-extensions-of-infinite-dimensional-Lie-groups} for
 the specialisation to topological and Lie groups. The approach to
 Schreier-like invariants for extensions of groupoids in
 \cite{BlancoBullejosFaro05Categorical-non-abelian-cohomology-and-the-Schreier-theory-of-groupoids}
 does not fit into our situation, for our sequences of groupoids shall
 not be bijectively on objects.
\end{tabsection}

\begin{remark}
 In order to match the following definition with the situation of
 extensions of groups recall that a short exact sequence
 $A\xrightarrow{i} B\xrightarrow{j} C$ is a sequence of order two such
 that the diagram
 \begin{equation}\label{eqn:pullback-ordinary}
  \vcenter{\xymatrix{A\ar[r]^{i}\ar[d] & B\ar[d]^{j}\\ {*} \ar[r]&C}}
 \end{equation}
 is at the same time a pullback ($i$ injective and $\im(i)\se\ker(j)$)
 and a pushout ($j$ surjective and $\ker(j)\se \im(i)$).

 In the case that we are working with a strict 2-category we have to replace
 the (ordinary) pullback by a 2-pullback and likewise replace a pushout
 by a 2-pushout. If $X\xrightarrow{f} Z$ and $Y\xrightarrow{g} Z$ are
 morphisms in a 2-category, then a \emph{2-pullback} consists of an object,
 denoted $X\times _{Z}Y$, 1-morphisms $X\times_{Z}Y\xrightarrow{p} X$
 and $X\times_{Z}Y\xrightarrow{q} Y$ and a 2-isomorphism
 $\varphi\from f \op{\circ} p\Rightarrow g \op{\circ} q$ such that the
 diagram
 \begin{equation*}
  \xymatrix{X\times_{Z}Y\ar[r]^(.6){p}\ar[d]^{q}&X\ar[d]^{f}\\
  Y\ar[r]^{g}&Z\ar@{=>}_{\varphi} "1,2"+/dl 1.5em/ ; "2,1"+/ur 1.5em/}
 \end{equation*}
 has the following universal property: For any object $W$ that comes
 equipped with morphisms $W\xrightarrow{m} X$ and $W\xrightarrow{n} Y$
 and a 2-isomorphism
 $\psi\from f \op{\circ} m \Rightarrow g \op{\circ} n$ there exists a
 morphism $s\from W\to X\times_{Z}Y$ and 2-isomorphisms
 $\xi\from m\Rightarrow p \op{\circ}s$ and
 $\zeta\from q \op{\circ} s\Rightarrow n $ such that
 \begin{equation*}
  \vcenter{\xymatrix{W\ar@/^.0pt/[dr]|{\blabel{s}}="q"\ar@/^{1em}/[drr]^{m}_{~}="s"\ar@/_{1em}/[ddr]_{n}^{~}="t" \\&X\times_{Z}Y\ar[r]^(.6){p}\ar[d]^{q}&X\ar[d]^{f}\\&
  Y\ar[r]^{g}&Z\ar@{=>}_{\varphi} "2,3"+/dl 1.5em/ ; "3,2"+/ur 1.5em/
  \ar@{=>}^(.7){\zeta} "2,2" ; "t" +/dr 1.5em/
  \ar@{=>}^(.3){\xi}   "s" +/dr 1em/ ; "2,2"  
  }}=                                         
  \vcenter{\xymatrix{W\ar@/^{1em}/[drr]^{m}_{~}="s"\ar@/_{1em}/[ddr]_{n}^{~}="t" \\&&X\ar[d]^{f}\\&
  Y\ar[r]^{g}&Z
  \ar@{=>}_{\psi} "2,3"+/dl 1.5em/ ; "3,2"+/ur 1.5em/
  }}.
 \end{equation*}
 Moreover, given another morphism $W\xrightarrow{s'}X\times_{Z}Y$ and
 2-isomorphisms $\alpha\from p \op{\circ}s\Rightarrow p \op{\circ} s' $
 and $\beta\from q \op{\circ} s\Rightarrow q \op{\circ} s'$ such that
 \begin{equation*}
  \vcenter{\xymatrix{W\ar@/^.0pt/[r]^{{s}}="q"\ar[d]_{s'}&X\times_{Z}Y\ar[r]^(.6){p}\ar[d]^{q}&X\ar[d]^{f}\\X\times_{Z}Y\ar[r]^{q}&
  Y\ar[r]^{g}&Z\ar@{=>}_{\varphi} "1,3"+/dl 1.5em/; "2,2"+/ur 1.5em/
  \ar@{=>}_{\beta}   "1,2"+/va(215) 1.5em/; "2,1" +/va(35) 1.5em/
  }}
  =
  \vcenter{\xymatrix@=1.5em{W\ar@/^.0pt/[d]_{{s'}}="q"\ar[r]^{s}& X\times_{Z}Y\ar[d]^{p} \\X\times_{Z}Y\ar[r]^(.6){p}\ar[d]_{q}&X\ar[d]^{f}\\
  Y\ar[r]^{g}&Z\ar@{=>}_{\varphi} "2,2"+/va(215) 1.5em/ ; "3,1"+/va(35) 1.5em/
  \ar@{=>}_{\alpha} "1,2"+/va(215) 1.5em/; "2,1"+/va(35) 1.5em/
  }},
 \end{equation*}
 there has to be a unique 2-isomorphism $\chi\from s\Rightarrow s'$ such
 that
 \begin{equation*}
  \vcenter{\xymatrix{W \ar@/^{1em}/[r]^{s}_(.4){~}="s"\ar@/_{1em}/[r]_{s'}^(.4){~}="t"&X\times_{Z}Y\ar[r]^(.6){p}&X
  \ar@{=>}^{\chi} "s"; "t"
  }}
  \!=\!
  \vcenter{\xymatrix@=1.2em{W \ar@/^{1em}/[rr]^{p \op{\circ} s}_{~}="s"\ar@/_{1em}/[rr]_{p \op{\circ} s'}^{~}="t"&&X
  \ar@{=>}^{\alpha} "s"; "t"
  }}\tx{ and }
  \vcenter{\xymatrix{W \ar@/^{1em}/[r]^{s}_(.4){~}="s"\ar@/_{1em}/[r]_{s'}^(.4){~}="t"&X\times_{Z}Y\ar[r]^(.6){q}&X
  \ar@{=>}^{\chi} "s"; "t"
  }}
  \!=\!
  \vcenter{\xymatrix@=1.2em{W \ar@/^{1em}/[rr]^{q \op{\circ} s}_{~}="s"\ar@/_{1em}/[rr]_{q \op{\circ} s'}^{~}="t"&&X
  \ar@{=>}^{\beta} "s"; "t"
  }}.
 \end{equation*}
 Along the same lines, one defines \emph{2-pushouts}.
\end{remark}

\begin{definition}
 (cf.\ \cite[Def.\
 66]{Schommer-Pries10Central-Extensions-of-Smooth-2-Groups-and-a-Finite-Dimensional-String-2-Group}) If
 $\tau\from A\to Z$ is a morphism of abelian groups (viewed as a crossed
 module for the trivial action of $Z$ on $A$), then we denote the
 associated 2-group from Example
 \ref{ex:strict-2-groups-from-crossed-modules} by $\cZ_{\tau}$, which we
 also call a  \emph{strict abelian 2-group}.

 For an arbitrary group $G$ denote by $\ul{G}$ the 2-group with objects
 $g\in G$, only identity morphisms and the 2-group structure induced by
 multiplication in $G$. Then we define an \emph{abelian extension} of
 $\ul{G}$ by $\cZ_{\tau}$ to be a sequence of 2-groups
 $\cZ_{\tau}\xrightarrow{i}\wh{\cG}\xrightarrow{q}\ul{G}$ such that
 $q \op{\circ} i$ is the constant functor $\nelt$ and that the diagram
 \begin{equation}\label{eqn:pullback-categorical}
  \vcenter{\xymatrix{\cZ_{\tau}\ar[r]^{i}\ar[d] & \wh{\cG}\ar[d]^{q}\\ {*} \ar[r]&\ul{G}
  \ar@{=>}^{\id_{\nelt}} "1,2"+/dl 1.5em/ ; "2,1"+/ur 1.5em/}}
 \end{equation}
 is a 2-pullback and a 2-pushout in the 2-category \cat{2-Grp}. Such an
 extension is called \emph{central} if the two functors
 \begin{equation}\label{eqn:functors_defining_action_on_the_kernel}
  \cZ_{\tau}\times\wh{\cG}\to\wh{\cG},
  \quad (z,g)\mapsto g\otimes(i(z)\otimes \ol{g})
  \quad\tx{ and }\quad
  \cZ_{\tau}\times\wh{\cG}\to\wh{\cG},\quad (z,g)\mapsto i(z)
 \end{equation}
 are naturally isomorphic.
\end{definition}

\begin{tabsection}
 Note that the fact that $\ul{G}$ has only identity morphisms enforces
 us to put $\id_{\nelt}$ into the 2-cell of the above diagram.
\end{tabsection}

\begin{remark}\label{rem:from-generalised-cocycles-to-2-groups}
 Let $G$ be a discrete group and $A,Z$ be discrete abelian groups. For
 $(F,\Theta)$ a generalised group cocycle with coefficients in
 $\tau\from A\to Z$, given by $\Theta\in C^{3}(G,A)$ and
 $F\in C^{2}(G,A)$ satisfying
 \eqref{eqn:cocycle-identity-for-generalised-cocycle-1} and
 \eqref{eqn:cocycle-identity-for-generalised-cocycle-2}, the following
 assignment defines a 2-group $\wh\cG_{(F,\Theta)}$. The category
 $\wh\cG_{(F,\Theta)}$ is given by
 \begin{alignat*}{3}
  \Obj(\wh\cG_{(F,\Theta)})&= Z\times G, &\quad  &
  s(a,x,g)=(x,g), \;\; t(a,x,g)=(\tau (a)+x,g)\\
  \Mor(\wh\cG_{(F,\Theta)})&= A\times Z\times G, & \quad &
  \id_{(x,g)}=(0,x,g),\;\; (a,x,g)\circ (b,y,g)=(a+b,y,g)
 \end{alignat*}
 and the multiplication functor by
 \begin{equation*}
  (a,x,g)\otimes(b,y,h)=(a+b,x+y+F(g,h),gh).		
 \end{equation*}
 Since $F$ satisfies the cocycle identity only up to correction by
 $\Theta$, this assignment defines a monoidal category if we define $\nelt=(0,e)$ and
 \begin{equation*}
  \alpha_{(x,g),(y,h),(z,k)}=(\Theta(g,h,k),x+y+z+F(g,h)+F(gh,k),ghk).
 \end{equation*}
 We clearly have $1\otimes g=g=g\otimes \nelt$, the
 source-target matching condition of $\alpha$ is equivalent to
 $\dd_{\op{gp}}F=\tau \circ \Theta$ and the pentagon identity is
 equivalent to $\dd_{\op{gp}}\Theta=0$. Moreover,
 \begin{equation*}
  \ol{(a,x,g)}=(-a,-x-F(g,g^{-1}),g^{-1})
 \end{equation*}
 defines an inversion functor on $\wh\cG_{(F,\Theta)}$, turning it into
 a 2-group. In addition to the 2-group structure on
 $\wh\cG_{(F,\Theta)}$, we have canonical functors
 $\cZ_{\tau}\xrightarrow{i} \wh{\cG}_{(F,\Theta)}$ and
 $\wh{\cG}_{(F,\Theta)}\xrightarrow{q} \ul{G}$.
\end{remark}

\begin{lemma}
 In the setting of the previous remark,
 $ \cZ_{\tau}\xrightarrow{i}\wh{\cG}_{(F,\Theta)}\xrightarrow{q}\ul{G}$
 is a central extension of $\ul{G}$ by $\cZ_{\tau}$.
\end{lemma}

\begin{proof}
 We first notice that the functors from
 \eqref{eqn:functors_defining_action_on_the_kernel} are actually equal
 in this case, so the extension will be central.

 We abbreviate $\cZ:=\cZ_{\tau}$ and $\cG=\wh{\cG}_{(F,\Theta)}$. Assume
 that $m\from W\to \cG$ is given such that $q \op{\circ} m=\nelt$. Then
 on objects we have that
 $m_{0}(w)\in q_{0}^{-1}(\nelt)=Z\times e_{G}\cong\cZ_{0}$ and on
 morphisms we have
 $m_{1}(v)\in q_{1}^{-1}(\nelt)=A\times Z\times \{e_{G}\}\cong \cZ_{0}$.
 So $m$ factors (on the nose) through a morphism $s\from W\to \cZ$,
 i.e., we may choose $\xi\from i \op{\circ} s\Rightarrow m$ (and of
 course also $\zeta\from * \op{\circ} s\Rightarrow *$) to be the
 identity natural transformations. Moreover, if we have
 $s'\from W\to \cZ$ and 2-isomorphisms
 $\alpha\from i \op{\circ} s\Rightarrow i \op{\circ} s' $ such that
 $q_{1}(\alpha(w))=\id_{\nelt}$, then
 $\alpha(w)\in q_{1}^{-1}(\nelt)=A\times Z\times\{e_{G}\}\cong\cZ_{1}$
 so that $\alpha$ factors through a 2-isomorphism
 $\chi \from s\Rightarrow s'$ which obviously satisfies the
 requirements. This shows that
 $\cZ_{\tau}\xrightarrow{i}\wh{\cG}_{(F,\Theta)}\xrightarrow{q}\ul{G}$  
is a
 2-pull back. Along similar lines one shows that it also is a 2-pushout.
\end{proof}

\begin{tabsection}
 We will now consider extensions of Lie 2-groups. Note that in the case
 of Lie groups (in
 \cite{Neeb02Central-extensions-of-infinite-dimensional-Lie-groups}) or
 in the setting of smooth 2-groups (in
 \cite{Schommer-Pries10Central-Extensions-of-Smooth-2-Groups-and-a-Finite-Dimensional-String-2-Group}) there is an
 additional requirement on a sequence
 $A\xrightarrow{i} B\xrightarrow{q} C$ besides that the diagram from
 \eqref{eqn:pullback-ordinary}, respectively
 \eqref{eqn:pullback-categorical}, is a (2-)pullback and a (2-)pushout.
 For Lie group extensions one requires the existence of a smooth local
 section (this then implies that $B\to C$ is a locally trivial principal
 $A$-bundle) and in
 \cite{Schommer-Pries10Central-Extensions-of-Smooth-2-Groups-and-a-Finite-Dimensional-String-2-Group} it is
 required that $A\xrightarrow{i} B\xrightarrow{q} C$ is an $A$-gerbe
 over $C$.\\

 In our treatment we restrict from now on to \'etale Lie
 2-groups, a concept that we are heading for now. This concept will be
 tailored to fit our Lie theoretic needs.
\end{tabsection}

\begin{remark}\label{rem:functor-from-2-spaces-to-2-vector-spaces}
 Similarly to the concept of a smooth 2-space (and smooth functors and
 natural transformations, cf.\ Definition \ref{def:2-spaces}), one
 defines (topological) \emph{vector 2-spaces} to be internal categories
 in locally convex vector spaces, i.e., small categories such that all
 sets occurring in the definition of a small category are locally convex
 spaces, all structure maps are continuous linear maps and source and
 target are projections. Likewise, linear functors and natural
 transformations are defined internally, defining the 2-category
 $\cat{2-Vect}$.

 There is a natural functor ${T}$ from the category $\cat{Man_{pt}}$ (of
 pointed manifolds with smooth base-point preserving maps) to the
 category $\cat{Vect}$ (of topological vector spaces with continuous
 linear maps), sending manifolds to the tangent spaces at the base-point
 and smooth maps to their differentials at the base-point. Since this
 functor preserves pull-backs, it maps categories, functors and natural
 transformation in $\cat{Man_{pt}}$ to ones in $\cat{Vect}$ and thus
 defines a 2-functor $\cT \from \cat{2-Man_{pt}}\to\cat{2-Vect}$.
\end{remark}

\begin{tabsection}
 If we want to enforce $\cT$ to take values in $\cat{Vect}$ instead of
 $\cat{2-Vect}$, then we need a canonical identification of
 $\cT(\cM)_{0}$ and $\cT(\cM)_{1}$. This is the case if $\cM$ is
 \'etale, as defined below.
\end{tabsection}

\begin{definition}\label{def:etale-lie-2-group}
 A smooth 2-space is called \emph{\'etale} if all structure maps are
 local diffeomorphisms. A Lie 2-group $(\cG,\cU)$ is called \'etale if
 $\cU$ is an \'etale 2-space. Morphisms and 2-morphisms for \'etale Lie
 2-groups are defined to be morphisms and 2-morphisms of Lie 2-groups.
 The corresponding 2-category is denoted by
 $\cat{Lie 2-Grp_{\acute{e}t}}$.
\end{definition}

\begin{tabsection}
 Most of the Lie 2-groups that we shall encounter in this article are
 \'etale. Note that the differentials of local diffeomorphisms give
 canonical identifications of the tangent spaces at the base-points.
 Thus $\cT(\cM)$ is in fact a vector space for \'etale $\cM$. We shall
 make this precise for Lie 2-groups below. Note also that
 $s^{-1}(\nelt)$ is discrete in an \'etale Lie 2-group. In particular,
 Lemma \ref{lem:triviality-of-associator-in-the-etale-case} applies to
 \'etale Lie 2-groups with globally smooth group operations.
\end{tabsection}

\begin{remark}\label{rem:integrating-Lie-algebriods}
 The passage from general Lie 2-groups to \'etale ones will have an
 effect that has also been used in
 \cite{TsengZhu06Integrating-Lie-algebroids-via-stacks} for the solution
 of the integration problem of (finite-dimensional) Lie algebroids (cf.\
 \cite{CrainicFernandes03Integrability-of-Lie-brackets}). It is a
 long-standing observation that Lie algebroids integrate to local Lie
 group
 \cite{Pradines68Troisieme-theoreme-de-Lie-les-groupoi-des-differentiables},
 but in general the integrating Lie groupoid may not be enlarged to a
 global Lie groupoid. The reasons for this failure is essentially the
 non-discreteness of the image of a period map (cf.\ \cite[Sect.\ 3.2 and
 Th.\ 4.1]{CrainicFernandes03Integrability-of-Lie-brackets}) for which
 \cite[Ex.\ 3.7]{CrainicFernandes03Integrability-of-Lie-brackets} and
 \cite[Ex.\ 1]{TsengZhu06Integrating-Lie-algebroids-via-stacks} give
 examples, very close to the integration problem that this
 article deals with. In
 \cite{TsengZhu06Integrating-Lie-algebroids-via-stacks} the integrating
 objects are \emph{Weinstein groupoids}, which are also \'etale and
 categorified replacements of Lie groupoids. The result from
 \cite{TsengZhu06Integrating-Lie-algebroids-via-stacks} can also be seen
 as integrating Lie algebroids to locally defined Lie groupoids and then
 solve the associativity-constraint by passing to Weinstein groupoids.
 In the same spirit, \'etale Lie 2-groups will be the integrating
 objects for integration locally exponential Lie algebras.
\end{remark}

\begin{definition}\label{def:Generalised-central-extension-of-Lie-groups}
 Let $G$ be an arbitrary Lie group and $\tau\from A\to Z$ be a morphism
 of abelian Lie groups with \emph{discrete} $A$. Then a \emph{smooth
 generalised central extension} (s.g.c.e.) is a sequence
 $\cZ_{\tau}\xrightarrow{i} \wh{\cG}\xrightarrow{q} \ul{G}$ of \'etale
 Lie 2-groups such that $p \op{\circ} i=\nelt$ and that the diagram
 \eqref{eqn:pullback-categorical}
 is a 2-pullback and a 2-pushout in \cat{Lie 2-Grp_{\acute{e}t}}.
 Moreover, we demand that there exists a smooth functor
 $q\from \ul{U} \to \wh{\cG}$ satisfying $q \circ s =\id_{\ul{U}}$,
 where $U\se G$ is some open identity neighbourhood and that the functors
 \begin{equation*}
  \cZ_{\tau}\times\wh{\cG}\to\wh{\cG},
  \quad (z,g)\mapsto g\otimes(i(z)\otimes \ol{g})
  \quad\tx{ and }\quad
  \cZ_{\tau}\times\wh{\cG}\to\wh{\cG},\quad (z,g)\mapsto i(z)
 \end{equation*}
 are smoothly isomorphic when restricted to some neighbourhood of
 $(0,e_{G})\in(\cZ_{\tau}\times\cG)_{0}$.
\end{definition}

\begin{tabsection}
 The requirement on a s.g.c.e. to be a sequence in \emph{\'etale} Lie
 2-group will enable us to take an easy way to central extensions of Lie
 algebras (cf.\ Proposition
 \ref{prop:s.g.c.e.-yield-central-extensions-of-lie-alg}). The
 \'etalness is not crucial for the definition to make sense, the same
 definition of course also works in \cat{Lie 2-Grp}. The following lemma
 is immediate from the definitions.
\end{tabsection}

\begin{lemma}\label{lem:From-generalised-cocycles-to-smooth-central-extensions}
 If $(F,\Theta)$ is a generalised cocycle on $G$ with coefficients
 $\tau\from A\to Z$ and $A$ is discrete, then the 2-group
 $\wh\cG_{(F,\Theta)}$ from Remark
 \ref{rem:from-generalised-cocycles-to-2-groups} is canonically an
 \'etale Lie 2-group and
 $ \cZ_{\tau}\xrightarrow{i}\wh{\cG}_{(F,\Theta)}\xrightarrow{q} \ul{G}$
 is a s.g.c.e.
\end{lemma}

The following proposition describes the way back from generalised
central extensions to ordinary ones. It is the categorical version of
the discreteness condition for $\per_{\omega}(\pi_{2}(G))$ from
\cite{Neeb02Central-extensions-of-infinite-dimensional-Lie-groups}.

\begin{proposition}
 Let $\cZ_{\tau}\xrightarrow{i} \wh{\cG}\xrightarrow{q} \ul{G}$ be a
 s.g.c.e. such that $\tau(A)\se Z$ is discrete. Then
 $\pi_{0}(\cZ_{\tau})$ and $\pi_{0}(\wh{\cG})$ carry Lie group
 structures with modelling space $\fz$ and $\fz\times\fg$
 (respectively), turning
 \begin{equation}\label{eqn:band-of-s.g.c.e.}
  \pi_{0}(\cZ_{\tau})\xrightarrow{\pi_{0}(i)} \pi_{0}(\wh{\cG})\xrightarrow{\pi_{0}(q)} G  
 \end{equation}
 into a central extension of Lie groups.
\end{proposition}

\begin{proof}
 First we note that $\pi_{0}(\cZ_{\tau})\cong Z/\tau(A)$ has a natural Lie
 group structure with Lie algebra $\fz$.
 Let $s\from \ul{U}\to \wh{\cG}$ be a smooth section of $q$. Then
 $(\pi_{0}(q))^{-1}(U)\cong \pi_{0}(\cZ_{\tau})\times U$ as a set and we endow
 $(\pi_{0}(q))^{-1}(U)$ with the smooth structure making this
 identification a diffeomorphism. Since the group multiplication on
 $\wh{\cG}$ is smooth on an open subcategory containing $\nelt$, the group
 multiplication in $\pi_{0}(\wh{\cG})$ is locally smooth. Since
 $U$ generates $G$, $\pi_{0}(q)^{-1}(U)$ generates $\pi_{0}(\wh{\cG})$
 and the assertion follows from Theorem
 \ref{thm:globalisation-of-smooth-structures-on-groups}.
\end{proof}

\begin{definition}
 The induced central extension \eqref{eqn:band-of-s.g.c.e.} is
 called the \emph{band} of
 $  \cZ_{\tau}\xrightarrow{i} \wh{\cG}\xrightarrow{q} \ul{G}$.
\end{definition}

\begin{corollary}
 If $\omega\from \fg\times\fg\to\fz$ is a Lie algebra cocycle and
 $(F,\Theta)$ is a generalised cocycle which integrates $\omega$ (cf.\
 Definition \ref{def:integrating-central-extensions}) and if
 $\tau(A)\se Z$ is discrete, then the band of $\wh\cG_{(F,\Theta)}$ is a
 central extension $Z/\tau(A)\to \wh{G}\to G$
 integrating $\fz\to\fz\oplus_{\omega}\fg\to\fg$.
\end{corollary}

\begin{proof}
 To see that the band of $\wh\cG_{(F,\Theta)}$ integrates
 $\fz\to\fz\oplus_{\omega}\fg\to\fg$ we first observe that for
 $q\from Z\to Z/\tau(A)$ the canonical quotient map
 $T\cq{(Z/\tau(A))}(e)\from \fz=T_eZ\to T_e(Z/\tau(A))$ is an
 isomorphism for $\tau(A)$ is discrete. Using this to identify $\fz$
 with $T_e(Z/\tau(A))$ the claim  follows from
 $L(\cq{(Z/\tau(A))} \circ F)=T\cq{(Z/\tau(A))}(e) \circ L(F)$.
\end{proof}

\begin{remark}
 We now derive a Lie algebra canonically associated to each \'etale Lie
 group $\cG$. We first show that the associator $\alpha$ is trivial on
 some neighbourhood of $\nelt$. Since $\cG$ is \'etale, the identity map
 $\cG_{0}\to \cG_{1}$ is a local inverse around $\nelt$ for both, $s$
 and $t$. Since $\alpha_{\nelt,\nelt,\nelt}=\nelt$, we thus have
 $\id \circ t \circ\alpha =\id \circ s \circ\alpha$, which implies
 $s \circ \alpha=t \circ \alpha$ on some neighbourhood of $\nelt$. Now
 multiplying $\alpha(g,h,k)$ with $\id_{\ol{(g\otimes h)\otimes k}}$
 defines a map with values in $s^{-1}(\nelt)$, which is continuous on
 some identity neighbourhood and thus constantly $\nelt$. Since
 $\alpha_{x,\ol{x},x}$ is an identity for each $x$ and $\alpha$ is
 natural, all of this implies
 \begin{equation*}
  \id_{(g\otimes h)\otimes k}=(\alpha_{g,h,k}\otimes \id_{\ol{(g\otimes h)\otimes k}})\otimes \id_{(g\otimes h)\otimes k}
  =\alpha_{g,h,k}
 \end{equation*}
 on some identity neighbourhood, which yields
 \begin{equation*}
  (g\otimes h)\otimes k=g\otimes(h\otimes k)
 \end{equation*}
 for $g,h,k$ from some neighbourhood of $\nelt$. Thus the multiplication
 functor defines on $\cG_{0}$ the structure of a local Lie group and
  induces on $T_{\nelt}\cG_{0}$ a Lie bracket. We denote this Lie
 algebra by $\mc{L}(\cG)$.

 A similar argument as above shows that for a morphism
 $\cF\from \cG\to\cG'$ of Lie 2-groups, where $\cG$ and $\cG'$ are
 \'etale, we have $\cF(g)\otimes'\cF(h)=\cF(g\otimes h)$ for $g,h$ from
 some identity neighbourhood. Thus $\cF_{0}$ induces a morphism of local
 Lie groups and thus of Lie algebras
 $\cL(\cF)\from \mc{L}(\cG)\to\mc{L}(\cG')$. Likewise, a 2-morphisms
 $\theta$ between two such morphisms has to be the identity on some
 identity neighbourhood. Summarising,
 \begin{equation*}
  \cL\from \cat{Lie 2-Grp_{\op{\acute{e}t}}}\to \cat{Lie Alg}
 \end{equation*}
 defines a 2-functor to the category of Lie algebras, considered as a
 2-category with only identity 2-morphisms.
\end{remark}

\begin{proposition}
\label{prop:s.g.c.e.-yield-central-extensions-of-lie-alg}
Let $\cZ_{\tau}\xrightarrow{i}\wh\cG\xrightarrow{q} \ul{G}$ be a s.g.c.e.
Then 
\begin{equation}
\label{eqn:derived-central-extension}
\cL(\cZ_{\tau})\xrightarrow{\cL(i)}\cL(\wh{\cG})\xrightarrow{\cL(q)}
\cL(\ul{G})
\end{equation}
is a central extension of Lie algebras.
\end{proposition}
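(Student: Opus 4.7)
The plan is to exploit the local product structure of a s.g.c.e.\ provided by the smooth section $s\from\ul{U}\to\wh{\cG}$ with $q\circ s=\id_{\ul{U}}$ from Definition \ref{def:Generalised-central-extension-of-Lie-groups}. Together with the centrality condition $\iota(a)\otimes g=g\otimes\iota(a)$ and the strictness of $\cZ$, the assignment $(z,g)\mapsto \iota(z)\otimes s(g)$ should yield a smooth equivalence of open subcategories $\cZ\times\ul{U}\cong q^{-1}(\ul{U})$ containing $\1$. This is the categorified version of the standard trivialisation of an ordinary central extension by means of a local section.

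First I would verify exactness of \eqref{eqn:derived-central-extension}. Applying the 2-functor $\cT$ from Remark \ref{rem:functor-from-2-spaces-to-2-vector-spaces} to the local equivalence above, and noting that $\wh{\cG}$ is étale by the preceding lemma so that $\cL$ is well-defined, one obtains a direct sum decomposition $\cL(\wh{\cG})\cong \cL(\cZ)\oplus \cL(\ul{G})$ of locally convex vector spaces compatible with $\cL(\iota)$ and $\cL(q)$. From this, $\cL(\iota)$ is injective, $\cL(q)$ is surjective, and $\ker(\cL(q))=\im(\cL(\iota))$.

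Next I would establish centrality. The relation $\iota(a)\otimes g=g\otimes \iota(a)$ on morphisms translates, for the multiplication functor $m$ on $\wh{\cG}$, into $m\circ (\iota\times \id)=m\circ (\id\times \iota)\circ \sigma$, where $\sigma$ denotes the factor swap, on the relevant subcategory. Differentiating twice at the base point yields a symmetry of the quadratic form $b$ from which the bracket on $\cL(\wh{\cG})$ is built in the final remark of Section \ref{sect:generalised-central-extensions}, namely $b(\cL(\iota)(x),y)=b(y,\cL(\iota)(x))$ for $x\in \cL(\cZ)$ and $y\in \cL(\wh{\cG})$. Its antisymmetrisation, which by construction is the Lie bracket $[\mathinner{\cdot},\mathinner{\cdot}]$, therefore vanishes on $\cL(\iota)(\cL(\cZ))\times \cL(\wh{\cG})$, so $\im(\cL(\iota))$ is central in $\cL(\wh{\cG})$.

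The main obstacle is to verify that $(z,g)\mapsto \iota(z)\otimes s(g)$ truly constitutes a smooth equivalence of open subcategories of $\cZ\times \ul{G}$ and $\wh{\cG}$ and not merely a bijection on objects, so that one can legitimately apply $\cT$ and $\cL$ to it. This requires transporting composition of morphisms across the map using the strictness of $\cZ$, exactness of the original sequence at $\wh\cG$, and local smoothness of the composition and multiplication functors on $\wh{\cG}$. Once this local trivialisation is established, both exactness and centrality follow from standard differentiation arguments carried out in the resulting product model.
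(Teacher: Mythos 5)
Your proposal is correct and follows essentially the same route as the paper: the paper's own (very terse) proof likewise derives exactness of \eqref{eqn:derived-central-extension} from exactness on objects together with the differential of the local section, relying on the local trivialisation $q^{-1}(\ul{U})\cong\cZ\times\ul{U}$ established in the preceding lemma. Your explicit verification of centrality, by differentiating the relation $\iota(a)\otimes g=g\otimes\iota(a)$ twice to get symmetry of $b$ on $\im(\cL(\iota))\times\cL(\wh{\cG})$ and hence vanishing of the antisymmetrised bracket, supplies a step the paper's proof leaves entirely tacit.
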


\begin{proof}
 That \eqref{eqn:derived-central-extension} is short exact follows the
 fact that the 2-functor $\cL$ preserves 2-limits, which turn into
 ordinary limits in $\cat{LieAlg}$. The differential of a section (on objects) of
 $\cZ_{\tau}\xrightarrow{i}\wh\cG\xrightarrow{q} G$ provides a linear and
 continuous section of \eqref{eqn:derived-central-extension}.
\end{proof}

\begin{definition}\label{def:integrating-central-extensions}
 For a s.g.c.e. $\cZ_{\tau}\xrightarrow{i}\wh{\cG}\xrightarrow{q} \ul{G}$
 its \emph{derived} central extension is the central extension
 \eqref{eqn:derived-central-extension}.
 If $\fz\to\wh{\fg}\to\fg$ is a topologically split central extension,
 then it is said to \emph{integrate} to a
 smooth generalised central extension if there exists a s.g.c.e. such that its
 derived central extension is equivalent to $\fz\to\wh{\fg}\to\fg$.
\end{definition}

\begin{theorem}\label{thm:integrating-central-extensions}
 If $\fg$ is the Lie algebra of the simply connected Lie group $G$, then
 each topologically split central extension $\fz\to\wh{\fg}\to\fg$
 integrates to a smooth generalised central extension of \'etale Lie
 2-groups.
\end{theorem}

\begin{proof}
 We may assume that $\wh{\fg}$ is equivalently given by a Lie algebra
 cocycle $\omega\from\fg\times\fg\to\fz$, which we integrate to a
 $\cZ_{\per_\omega}$-valued cocycle $(F_{\omega,\beta},\Theta_{\beta})$
 by Theorem \ref{thm:integration-of-cocycles} for some appropriate
 choice of $\beta$. Then Lemma
 \ref{lem:From-generalised-cocycles-to-smooth-central-extensions} yields
 a s.g.c.e.
 $\cZ_{\per_{\omega}}\to \wh\cG_{(F_{\omega,\beta},\Theta_{\beta})}\to \ul{G}$.

 Let $U,V\se G$ be open identity neighbourhoods such that
 $\left.F\right|_{U\times U}$ and
 $\left.\Theta\right|_{U\times U\times U}$ are smooth and
 $V\cdot V\se U$. To calculate the derived central extension we
 consider the restriction of the multiplication functor $m$ to the full
 subcategory with objects in $\fz\times U$, where it is given by
 \begin{equation*}
  m_{0}((z,g),(w,h))=(z+w+F_{\omega,\beta}(g,h),gh)
 \end{equation*}
 on objects. By the definition of the Lie bracket of a local Lie group,
 the Lie bracket on $\mc{L}(\wh\cG_{(F_{\omega,\beta},\Theta_{\beta})})$
 is given by
 \begin{equation*}
  ((z,x),(w,y))\mapsto (L(F_{\omega,\beta})(x,y),[x,y])
 \end{equation*}
 and $L(F_{\omega,\beta})=\omega$ shows the claim.
\end{proof}

We thus recover the classical case of central extensions by passing from a
generalised central extension to its band in the case that
$\per_{\omega}(\pi_{2}(G))\se\fz$ is discrete. Moreover, we can interpret the
proof of the previous theorem as first passing to a 2-connected cover of $G$
and then solve a trivial integration problem in the following sense.

\begin{remark}\label{rem:2-connected-cover}
 Let $\beta\from G^{2}\to C^{\infty}_{*}(\Delta^{(2)}, G)$ be the map
 from Lemma \ref{lem:homology-cocycle-actual-choice-for-a-chart},
 applied to a chart $\varphi$ with $d \varphi(e)=\id_{\fg}$ and
 $\Theta_{\beta}\from G^{3}\to \pi_{2}(G)$ be the corresponding group
 3-cocycle from Lemma \ref{lem:homology-cocycle}. Then $\Theta_{\beta}$
 determines an (in general non-strict) Lie 2-group
 $\cG:=\wt{\cG}_{(0,\Theta)}$ (cf.\ Lemma
 \ref{lem:From-generalised-cocycles-to-smooth-central-extensions}),
 which we interpret as an appropriate version of a 2-connected cover of
 $G$ (cf.\
 \cite{PorstWockel08Higher-conneced-covers-of-topological-groups-via-categorified-central-extensions}).
 In particular, we have a smooth generalised central extension
 \begin{equation*}
  B \pi_{2}(G)\to \cG\to\ul{G},
 \end{equation*}
 where $B \pi_{2}(G)$ is the strict Lie 2-group, associated to the
 crossed module $\pi_{2}(G)\to \{*\}$. Now
 $\wh{\cG}:=\wh{\cG}_{(F_{\omega,\beta},\Theta_{\beta})}$ can be seen as
 a central extension $\ul{\fz}\to \wh{\cG}\to \cG$ (when generalising
 central extensions of Lie 2-groups to non-\'etale ones in the obvious
 way). Summarising, we have the commutative diagram
 \begin{equation*}
  \xymatrix{
  \ul{\fz} \ar[r]\ar[d]&\ul{\fz}\ar[r]\ar[d]&\{*\}\ar[d]\\
  \cZ_{\per_{\omega}} \ar[r]\ar[d] & \wh{\cG} \ar[r]\ar[d] &\ul{G}\ar[d]\\
  B \pi_{2}(G) \ar[r]& \cG \ar[r]& \ul{G}
  }
 \end{equation*}
 with exact rows and columns. Since $\cG$ is an \'etale Lie 2-group with
 Lie algebra $\fg$, one may interpret $\ul{\fz}\to \wh{\cG}\to \cG$ also
 as a central extension of \'etale 2-groups integrating
 $\fz\to\wh{\fg}\to\fg$.
\end{remark}

\section{Lie's Third Theorem}\label{sect:lies_third_theorem}

\begin{tabsection}
 We conclude this paper with the following generalisation of Lie's Third
 Theorem. We briefly recall definitions and some basic facts.
\end{tabsection}

\begin{definition}
 A locally convex Lie algebra $\fg$ is said to be locally exponential if
 there exists a circular convex open zero neighbourhood $U\se \fg$ and
 an open subset $D\se U\times U$ on which there exists a smooth map
 \[
  m_{U}\from D\to U,\quad (x,y)\mapsto x*y
 \]
 such that $(D,U,m_{U},0)$ is a local Lie group and such that the
 following holds.
 \begin{enumerate}
        \renewcommand{\labelenumi}{\theenumi}
        \renewcommand{\theenumi}{\roman{enumi})}
  \item For $x\in U$ and $|t|, |s|, |t+s| \leq 1$, we have
        $(tx, sx) \in D$ with $tx*sx=(t+s)x$.
  \item The second order term in the Taylor expansion of $m_{U}$
        in $0$ is $b(x,y)=\frac{1}{2}[x,y]$.
 \end{enumerate}
\end{definition}

\begin{remark}
 (cf.\ \cite[Ex.\
 IV.2.4]{Neeb06Towards-a-Lie-theory-of-locally-convex-groups}) All
 Banach-Lie algebras are locally exponential, as well as all Lie
 algebras of locally exponential Lie groups.
\end{remark}

\begin{Theorem}
 (\cite[Thm.\
 IV.3.8]{Neeb06Towards-a-Lie-theory-of-locally-convex-groups})\label{thm:gAdIsEnlargible}
 Let $\fg$ be a locally exponential Lie algebra. Then the adjoint group
 $G_{\ad}\leq \Aut(\fg)$ carries the structure of a locally exponential
 Lie group whose Lie algebra is $\fg_{\ad}:=\fg /\fz(\fg)$.
\end{Theorem}

\begin{tabsection}
 The route to Lie's Third Theorem seems to be clear, simply integrate
 $\fz(\fg)\xrightarrow{\op{incl}} \fg\xrightarrow{q} \fg_{\ad}$.
 But the latter need not be topologically split, as the following example
 shows.
\end{tabsection}

\begin{example}\label{ex:topologically_non-split_adjoint_algebra}
 Let $F\leq E:=\ell^{p}(\N)$ for some $1<p<2$ be a non-complemented, in
 particular infinite-dimensional subspace, i.e., there exists no
 continuous projection $E\to F$. We choose a linearly independent
 sequence $(e_{n})_{n\in\N}$ in $F$. Moreover, we choose a linearly
 independent sequence $(a_{n})_{n\in\N}$ in $F^{\bot}$ such that
 $\op{span}\{a_{n}\}$ is dense in $F^{\bot}$ and for each $a_{n}$
 another linearly independent $b_{n}\in F^{\bot}$. Having fixed this we
 set
 \begin{equation*}
  [x,y]:=\sum_{n=1}^{\infty}\frac{1}{2^{n}}
  (a_{n}(x)b_{n}(y)-a_{n}(y)b_{n}(x))e_{n}.
 \end{equation*}
 Since $a_{n}(e_{m})=b_{n}(e_{m})=0$ we have that
 $[[x,y],z]+[[y,z],x]+[[z,x],y]=0$ and thus
 $[\mathinner{\cdot},\mathinner{\cdot}]$ defines a Lie bracket on $E$.
 An element $x\in E$ is in the centre precisely if the map
 $[x,\mathinner{\cdot}]$ is trivial. This is the case if $x\in F$. On
 the other hand, if $x\notin F$, then $a_{n}(x)\neq 0$ for at least one
 $n\in\N$. For each $0\neq y\in\ker(a_{n})$ we have $y\notin\ker(b_{n})$
 and thus $[x,y]\neq 0$. This shows
 $x\notin F\Rightarrow[x,\mathinner{\cdot}]\neq 0$ and thus $F$ is the
 centre of $E$.
\end{example}

\begin{tabsection}
 A procedure similar to considering generalised central extensions as in
 \cite[Sect.\ VI.1]{Neeb06Towards-a-Lie-theory-of-locally-convex-groups}
 now remedies this failure.
\end{tabsection}

\begin{theorem}
 If $\fg$ is a Mackey-complete locally exponential Lie algebra, then
 there exists an \'etale Lie 2-group $\cG$ such that $\cL(\cG)$ is
 isomorphic to $\fg$.
\end{theorem}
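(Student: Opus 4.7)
The plan is to reduce the statement to Theorem \ref{thm:integrating-central-extensions} by exhibiting $\fg$ as the middle term of a central extension whose quotient already admits a simply connected global integration.

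First I would form the centre $\fz(\fg)$ and pass to the centre-free quotient $\fg_{\op{ad}} := \fg/\fz(\fg)$, obtaining the central extension
\[
 \fz(\fg) \hookrightarrow \fg \twoheadrightarrow \fg_{\op{ad}}
\]
of locally convex Lie algebras. The input specific to the locally exponential hypothesis, recalled in the introduction, is that $\fg_{\op{ad}}$ is automatically enlargeable: there exists a simply connected Lie group $G_{\op{ad}}$ with $L(G_{\op{ad}}) = \fg_{\op{ad}}$. This is the one ingredient that is not automatic for an arbitrary locally convex Lie algebra and that genuinely uses local exponentiality (for instance via the Banach case or, more generally, via Neeb's integration criterion applied to a centreless locally exponential Lie algebra).

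With $G_{\op{ad}}$ at hand, the standing hypothesis of Theorem \ref{thm:integrating-central-extensions} is met, and applying it to the above central extension produces a smooth generalised central extension
\[
 \cZ \xrightarrow{\iota} \wh{\cG} \xrightarrow{q} \ul{G_{\op{ad}}}
\]
of Lie 2-groups whose derived central extension, in the sense of Definition \ref{def:integrating-central-extensions} and Proposition \ref{prop:s.g.c.e.-yield-central-extensions-of-lie-alg}, is equivalent to the one we started with. In particular the middle term satisfies $\cL(\wh{\cG}) \cong \fg$ as Lie algebras. Moreover, by the lemma asserting that the total space of a s.g.c.e.\ is automatically étale, $\wh{\cG}$ lies in $\cat{Lie 2-Grp_{\op{\acute{e}t}}}$, so setting $\cG := \wh{\cG}$ concludes the argument.

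Beyond invoking these two results, the plan encounters essentially no obstacle: the analytic work has already been carried out in Theorem \ref{thm:integrating-central-extensions} (integration of the Lie algebra cocycle via the universal homology 3-cocycle $\Theta_{\beta}$ and the cochain $F_{\omega,\beta}$), and the locally exponential hypothesis enters only through the enlargeability of $\fg_{\op{ad}}$. If one wanted to cast things down to an ordinary central extension of Lie groups, the obstruction $\per_{\omega}(\pi_{2}(G_{\op{ad}})) \subseteq \fz(\fg)$ being non-discrete would be exactly the classical obstacle absorbed by passing to the 2-group $\wh{\cG}$.
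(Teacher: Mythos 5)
Your proposal is correct and follows exactly the paper's own argument: pass to the adjoint quotient $\fz(\fg)\hookrightarrow\fg\twoheadrightarrow\fg_{\op{ad}}$, use local exponentiality (via Neeb's Theorem IV.3.8, restated in the appendix) to obtain a simply connected $G_{\op{ad}}$ integrating $\fg_{\op{ad}}$, and then apply Theorem \ref{thm:integrating-central-extensions} together with the fact that the total space of a s.g.c.e.\ is \'etale. No substantive differences.
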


\begin{proof}
 We consider $\fg_{\ad}:=\fg /\fz(\fg)$ and the map
 $\fg\times\fg\to \fg$, $(x,y)\mapsto [x,y]$. This map vanishes if
 $x\in\fz(\fg)$ or $y\in\fz(\fg)$ and thus induces a continuous cocycle
 $\omega_{\fg}\from \fg_{\ad}\times\fg_{\ad}\to|\fg|$, where $|\fg|$
 denotes the Mackey-complete locally convex space underlying $\fg$. This
 integrates by Theorem \ref{thm:integration-of-cocycles} to a
 generalised cocycle $(F_{\omega_{\fg}},\Theta)$, which in turn gives
 rise to an \'etale Lie 2-group $\wh{\cG}_{(F_{\omega_{\fg}},\Theta)}$
 with set of objects $|\fg|\times G_{\ad}$. Moreover, we may assume that
 $F_{\omega_{\fg}}$ is smooth on $V\times V$ and $V=V^{-1}$.

 The exponential function $\exp_{\fg_{\ad}}\from\fg_{\ad}\to G_{\ad}$
 restricts to a diffeomorphism on some open zero neighbourhood
 $U\se\fg_{\ad}$ and we may assume that $\exp(U)\se V$. We now want to
 construct a local exponential function for
 $|\fg|\oplus_{\omega_{\fg}}\fg_{\ad}$ and for this first define
 $\gamma_{x}(t):=\exp_{\fg_{\ad}}(tx)$ for $x\in\fg_{\ad}$ and for
 $x\in U$ and $t\in[0,1]$ we set
 \begin{equation*}
  z_{x}(t):=-\int_{0}^{t}
  TF_{\omega_{\fg}}\left(0_{\gamma_{x}(s)^{-1}},
  \left.\frac{d}{du}\right|_{u=s}\gamma_{x}(u)\right)\;ds,
 \end{equation*}
 where $0_{\gamma_{x}(s)^{-1}}$ denotes the zero element in
 $T_{\gamma_{x}(s)^{-1}}G_{\ad}$. Note that the integral exists since
 $|\fg|$ is Mackey-complete. With this we set
 \begin{equation*}
  \eta_{(z,x)}\from[0,1]\to |\fg|\times G_{\ad},\quad \eta_{(z,x)}(t):=
  (tz+z_{x}(t),\gamma_{x}(t))
 \end{equation*}
 and observe that
 $\dot{z}_{x}(t)=-TF_{\omega_{\fg}}(0_{\gamma_{x}(t)^{-1}}, \dot{\gamma_{x}}(t))$
 implies
 \begin{equation*}
  \left.\frac{d}{dt}\right|_{t=0} \eta_{(z,x)}(t_{0})^{-1}\eta_{(z,x)}(t_{0}+t)=(z,x)
 \end{equation*}
 for $t_{0}\in (0,1)$. Thus
 \begin{equation*}
  \exp\from |\fg|\times U \to |\fg|\times \exp_{\fg_{\ad}}(U)\se|\fg|\times G_{\ad},\quad
  (z,x)\mapsto (z+z_{x}(1),\exp_{\fg_{\ad}}(x)).
 \end{equation*}
 defines a local exponential function, which is a diffeomorphism
 since $(z,x)\mapsto (z,\exp_{\fg_{\ad}}(x))$ is one.

 Now $\fg$ is isomorphic to the closed ideal $\{(x,q(x)):x\in\fg\}$ of
 $|\fg|\oplus_{\omega_{\fg}}\fg_{\ad}$, and thus $\exp$ restricts to a
 diffeomorphism of $(|\fg|\times U)\cap \fg$ onto
 $W:= (|\fg|\times \exp_{\fg_{\ad}}(U))\cap \exp(\fg)$. Note that we
 have in particular $\fz(\fg)\times \exp_{\fg_{\ad}}(U) \se W$. We
 define $\cG$ to be the monoidal subcategory of
 $\wh{\cG}_{(F_{\omega_{\fg}},\Theta)}$, generated by the full
 subcategory $\mc{W}$ determined by $W$.

 It remains to check that $\cG$ defined this way actually is an \'etale
 Lie 2-group with Lie algebra $\fg$. We have $\cW_{0}=W$ (by definition)
 and $\cW_{1}=\pi_{2}(G_{\ad})\times W$, since
 $\per_{\omega_{\fg}}\from \pi_{2}(G_{\ad})\to |\fg|$ takes values in
 $\fz(\fg)\se \fg$ by \cite[Th.\
 VI.1.6.]{Neeb06Towards-a-Lie-theory-of-locally-convex-groups} and
 $\fz(\fg)\times \exp_{\fg_{\ad}}(U) \se W$. With the restricted
 structure maps this clearly is an \'etale 2-space and thus $(\cG,\cW)$
 is an \'etale Lie 2-group. Since $\exp$ is a local exponential function
 it is also clear that the Lie algebra, associated to the local group
 $(\mu^{-1}(W),W,\mu,(0,e))$ (with
 $\mu((z,x),(w,y))=(z+w+F_{\omega_{\fg}}(x,y),xy)$) is isomorphic to
 $\fg$. Thus $\cL(\cG)\cong\fg$.
\end{proof}

\begin{proof}
 \textbf{(of Proposition
 \ref{prop:integrating_lie_algebras_to_loop_prolongations})} The set of
 objects of the \'etale Lie 2-group $\cG$ constructed in the previous
 theorem give rise to a loop, which restricts to a locally smooth loop
 on some open neighbourhood of $\nelt$. Since the Lie 2-group is
 \'etale, this locally smooth loop is also locally associative.
 Moreover, the Lie algebra associated to this locally smooth and locally
 associative loop coincides with $\cL(\cG)$ and thus is isomorphic to $\fg$.
\end{proof}

\section{Prospects}

We tried to develop a completed account on the integration of infinite-dimensional
Lie algebras to Lie 2-groups. In order to do so we dropped some topics that may be
at hand which we shortly line out in this section. Most of them deserve to be
worked out seriously.

\begin{remark}[Diffeological Lie Groups]\label{rem:diffeological-lie-groups}
 The problem that one encounters when trying to integrate central
 extensions of infinite-dimensional Lie algebras to Lie groups is that
 one has to factor out subgroups from locally convex spaces that may be
 not discrete. This has to be done to ensure that the cocycle condition
 for a certain universal cocycle holds.

 However, one may resolve this problem by enlarging the category of
 smooth manifolds to a category in which this quotient exists. For
 instance, the category of diffeological spaces (or more general smooth
 spaces, cf.\
 \cite{BaezHoffnung08Convenient-Categories-of-Smooth-Spaces}) has this
 property. From our cocycle $(F_{\omega,\beta},\Theta_{\beta})$,
 integrating a given Lie algebra cocycle $\omega$, one obtains an
 ordinary group cocycle $q \circ F_{\omega,\beta}$, which is in general
 (locally) smooth as a map between diffeological spaces, because the
 quotient map $q\from \fz\to \fz/\Pi_{\omega}$ is smooth, no matter
 whether $\Pi_{\omega}:=\per_{\omega}(\pi_{2}(G))$ is discrete or not.
 With the corresponding version of Theorem
 \ref{thm:globalisation-of-smooth-structures-on-groups} for
 diffeological spaces one thus constructs a diffeological group
 $\wh{G}_{\omega}$ and
 \begin{equation*}
  \fz/\Pi_{\omega}\to \wh{G}_{\omega}\to G
 \end{equation*}
 is a candidate for a central extension of diffeological groups,
 integrating
 \begin{equation*}
  \fz \to \fz\oplus_{\omega}\fg\to\fg.
 \end{equation*}
 The crucial point here would be to set up the notion of a Lie functor from
 diffeological spaces to vector spaces such that it takes
 $\fz/\Pi_{\omega}$ to $\fz$, even if $\Pi_{\omega}$ is not discrete
 (such a thing should exist according to \cite{Schreiberpersonal-communicaltion}).

 In \cite{Iglesias95La-trilogie-du-moment}, a similar construction has
 been done in order to obtain a prequantisation for an arbitrary
 symplectic manifold $(M,\omega)$, with not necessarily integral
 $[\omega]\in H^{2}_{\op{dR}}(M)$. In particular, prequantisation can be
 performed by directly passing to the dual of the Lie algebra, without
 constructing a Lie algebra at first\footnote{Even if there is a Lie
 algebra around, there is a priori no canonical dual space, associated
 to it, for the usual topologies on dual spaces are not good enough
 (cf.\ \cite{Neeb06Towards-a-Lie-theory-of-locally-convex-groups}). So
 it is more natural to pass directly to the dual.}
 \cite{Iglesias08The-moment-maps-in-diffeology}.
\end{remark}

\begin{remark}[Differential Geometry of Generalised Extensions]\label{rem:prospect:differential_geometry_of_generalised_central_extensions}
 One perspective to the integration procedure for central extensions of
 Lie algebras is to find a Lie group extension as a principal bundle
 with a prescribed curvature. It should be possible to develop such a
 point of view also for smooth generalised central extensions, a similar
 perspective has been taken, for instance, by Schommer-Pries
 \cite{Schommer-Pries10Central-Extensions-of-Smooth-2-Groups-and-a-Finite-Dimensional-String-2-Group}.

 On the level of cocycles, the passage is quite clear. For a cocycle
 $f\from G\times G\to Z$, smooth on $U\times U$, the central extension
 $Z\to Z\times _{f}G\to G$ is a principal bundle, described by the
 transgressed \v{C}ech cocycle
 \begin{equation*}
  \gamma_{g,h}\from gV\cap hV\to Z,\quad x\mapsto f(g,g^{-1}x)-f(h,h^{-1}x)
 \end{equation*}
 where $V\se U$ is an open identity neighbourhood with $V\cdot V\se U$.
 That $\gamma_{g,h}$ is smooth follows from
 \begin{equation*}
  f(g,g^{-1}x)-f(h,h^{-1}x)=f(g^{-1}h,h^{-1}x)-f(g,g^{-1}h)
 \end{equation*}
 and from $g^{-1}h\in U$ if $gV\cap hV\neq \emptyset$. For a generalised
 cocycle $(F,\Theta)$ the transgressed non-abelian \v{C}ech cocycle is
 accordingly given by
 \begin{equation*}
  \gamma_{g,h}\from gV\cap hV\to Z,\quad x\mapsto F(g,g^{-1}x)-F(h,h^{-1}x)-\tau(\Theta(g,g^{-1}h,h^{-1}x))
 \end{equation*}
 and
 \begin{multline*}
  \eta_{g,h,k}\from gV\cap hV\cap kV\to Z,\\ 
  x\mapsto -\Theta(g,g^{-1}h,h^{-1}x)-\Theta(h,h^{-1}k,k^{-1}x)+\Theta(g,g^{-1}k,k^{-1}x).
 \end{multline*}
 This yields a principal $\cZ$-2-bundle $\cP$ (over $\ul{G}$)
 \cite{Wockel09Principal-2-bundles-and-their-gauge-2-groups}, which is
 as a groupoid (without any additional structure) equivalent to
 $\cG_{F,\Theta}$. Applied to the string cocycle $\varphi$ from Example
 \ref{ex:string-2-group} this 2-bundle is the prequantisation  for the
 2-plectic manifold
 $(G,\langle[\mathinner{\cdot},\mathinner{\cdot}],\mathinner{\cdot}\rangle)$
 \cite{Brylinski93Loop-spaces-characteristic-classes-and-geometric-quantization}
 and
 \cite{BaezHoffnungRogers10Categorified-symplectic-geometry-and-the-classical-string}.
 In general, the interpretation of $\cP$ as a bundle with connection is be a bit
 more tricky since the principal bundle $\pi_{0}(\cP)$ should admit
 curvature (in fancy terms, we want the fake curvature \emph{not} to
 vanish). The theory of higher bundles with connection is being
 developed at the moment (cf.\
 \cite{SchreiberWaldorf08Connections-on-non-abelian-Gerbes-and-their-Holonomy},
 \cite{NikolausWaldorf11Four-Equivalent-Versions-of-Non-Abelian-Gerbes},
 \cite{Schreiber11Differential-Cohomology-in-a-Cohesive-infty-Topos},
 references therein and
 \cite{Waldorf10Multiplicative-bundle-gerbes-with-connection} for the
 case of group extensions).
\end{remark}

\begin{remark}[Non-Locally Exponential Lie Algebras]
 One may wonder whether a similar theorem as our
 version for Lie's Third Theorem is also in reach for non-locally
 exponential Lie algebras. To our best knowledge it would be unlikely to
 expect a similar result in this direction, for the algebraic properties
 of non-locally exponential Lie algebras couple very hardly to their
 local Lie groups (if they exist at all). For instance, Lempert proved
 that $\cV(M)_{\C}$ is even not \emph{integrable} for any compact
 manifold $M$ (cf.\
 \cite{Lempert97The-problem-of-complexifying-a-Lie-group}), which relies
 on more involved arguments as the
 counterexample of van Est and Korthagen in
 \cite{EstKorthagen64Non-enlargible-Lie-algebras}.
\end{remark}

\begin{remark}[Higher Lie Algebras and Lie Algebroids]
 In a sense, we performed a similar integration procedure as
 Henriques in \cite{Henriques08Integrating-Lsb-infty-algebras}. It thus
 seems to be promising to carry this analogy further to integrate even
 infinite-dimensional Lie 2-algebras or to enlarge Henriques' procedure
 beyond the Banach-case. Since the obstructions for integrating
 locally exponential Lie algebras and
 finite-dimensional Lie algebroids
 \cite{CrainicFernandes03Integrability-of-Lie-brackets} seem to be the
 same, an integration procedure for special classes of
 infinite-dimensional Lie algebroids (e.g. Banach-Lie algebroids) as
 in \cite{TsengZhu06Integrating-Lie-algebroids-via-stacks} is quite
 likely.
\end{remark}

\begin{remark}[Stacky Lie groups]
 Our definition of a Lie 2-group is somewhat weaker than one would
 expect at first. However, if one leaves the world of manifolds and
 considers Lie groupoids as presentations of differentiable stacks, then
 we expect that Lie 2-groups as defined above lead to stacky Lie groups
 in the sense of \cite{Blohmann08Stacky-Lie-groups}.\\
 
 \noindent\textbf{Problem:} If $\cG$ is a Lie 2-group (in the sense of
 Definition \ref{def:Lie-2-group}), does there exist a stacky Lie group
 (in the sense of \cite{Blohmann08Stacky-Lie-groups}) or alternatively a
 smooth group stack $\cH$ such that the underlying 2-groups are
 equivalent and the smooth stacks are equivalent in a ``neighbourhood''
 of the identity? If this is the case, can this correspondence be
 promoted to an equivalence of the corresponding 2-categories?\\
 
 One possible way to obtain this would be to follow the usage of the
 associativity of the group multiplication through Theorem
 \ref{thm:globalisation-of-smooth-structures-on-groups}. The coordinates
 of the Lie group structure on $G$ would yield a Lie groupoid, the
 multiplication on $G$ a Hilsum-Skandalis morphism describing the
 multiplication morphism between the stacky Lie groups and the usage of
 the associativity, finally, the associator 2-morphism.

 The above problem seems to be solvable since in the cases know to the
 author an ad-hoc construction yields stacky Lie groups from Lie
 2-groups. For the String 2-group from Example \ref{ex:string-2-group}
 this is the construction of Schommer-Pries in
 \cite{Schommer-Pries10Central-Extensions-of-Smooth-2-Groups-and-a-Finite-Dimensional-String-2-Group}
 and for the 2-groups $\wh{\cG}_{(F_{\omega,\beta},\Theta_{\beta})}$ and
 $\wt{\cG}_{(0,\Theta_{\beta})}$ from Theorem
 \ref{thm:integrating-central-extensions} and Remark
 \ref{rem:2-connected-cover} this is carried out in
 \cite{ZhuWockel10Integrating-central-extensions-of-Lie-algebras-via-group-stacks}.
 Moreover, it would be desirable to work out a Lie theory of stacky Lie
 groups directly in the correct categorical setup.
\end{remark}

\appendix

\section{Appendix: Differential calculus on locally convex spaces}
\label{sect:appendix}

We provide some background material on locally convex Lie groups and
their Lie algebras in this appendix.

\begin{definition}
 \label{def:diffcalcOnLocallyConvexSpaces} Let \mbox{ $X$} and \mbox{
 $Y$} be a locally convex spaces and \mbox{ $U\se X$} be open. Then
 \mbox{ $f\from U\to Y$} is \emph{differentiable}  or \emph{\mbox{
 $C^{1}$}} if it is continuous, for each \mbox{ $v\in X$} the
 differential quotient
 \[
  df (x).v:=\lim_{h\to 0}\frac{f (x+hv)-f (x)}{h}
 \]
 exists and if the map \mbox{ $df\from U\times X\to Y$} is continuous.
 If \mbox{ $n>1$} we inductively define \mbox{ $f$} to be \emph{\mbox{
 $C^{n}$}} if it is \mbox{ $C^{1}$} and \mbox{ $df$} is \mbox{
 $C^{n-1}$} and to be \mbox{ $C^{\infty}$} or \emph{smooth}  if it is
 \mbox{ $C^{n}$}. We say that \mbox{ $f$} is \mbox{ $C^{\infty}$} or
 \emph{smooth} if \mbox{ $f$} is \mbox{ $C^{n}$} for all \mbox{
 $n\in\N_{0}$}. We denote the corresponding spaces of maps by \mbox{
 $C^{n}(U,Y)$} and \mbox{ $C^{\infty}(U,Y)$}.

 A (locally convex) \textit{Lie group} is a group which is a smooth
 manifold modelled on a locally convex space such that the group
 operations are smooth. A locally convex Lie algebra is a Lie algebra,
 whose underlying vector space is locally convex and whose Lie bracket
 is continuous.
\end{definition}

\begin{remark}
 We have the chain rule
 \begin{equation*}
  d(g \op{\circ} f)(x).v=dg(f(x)).(df(x).v)
 \end{equation*}
 and the identities $d^{2}f(x)(v,w)=\pr_{2}(d(Tf)(x,v).(w,0))$ (more
 precisely
 \begin{equation*}
  d(Tf)(x,v)(w,0)=(df(x).w,d^{2}f(x)(v,w)) \,)
 \end{equation*}
 and
 \begin{equation*}
  d(Tf)(x,v)(w,w')=d(Tf)((w,0)+(0,w'))=(df(x).w,d^{2}f(x)(v,w))+(0,df(x).w').
 \end{equation*}
 This implies the ``chain rule'' for $d^{2}f$:
 \begin{equation}\label{eqn:chain_rule_for_d2}
  d^{2}(g \op{\circ} f)(x).(v,w)=d^{2}g(f(x))(df(x).v,df(x).w)+dg(f(x)).d^{2}f(x)(v,w).
 \end{equation}
 If $M$ is a manifold and we take the definition of the tangent bundle
 \begin{equation*}
  TM:=\left(\bigcup_{i\in I}\{i\}\times \varphi_{i}(U_{i})\times X\right)/\sim
 \end{equation*}
 with
 $(i,\varphi_{i}(x),v)\sim(i',\varphi_{i'}(x),d(\varphi_{i'}\op{\circ} \varphi_{i}^{-1})(\varphi _{i}(x)).v)$
 if $x\in U_{i}\cap U_{i'}$, then the map
 \begin{equation*}
  d^{2}f\from  (T_{x}M)^{2}\to T_{f(x)}N,\quad  [i,\varphi_{i}(x),v],[i,\varphi _{i}(x),w]\mapsto [j,\psi_{j}(f(x)),d^{2}(\psi_{j}\op{\circ}f \op{\circ} \varphi _{i}^{-1})(\varphi_{i}(x))(v,w)]
 \end{equation*}
 is well-defined accoring to \eqref{eqn:chain_rule_for_d2}.
\end{remark}

\begin{definition}\label{def:exponentialfunction}
 Let \mbox{\mbox{ $G$}} be a locally convex Lie group. The group
 \mbox{\mbox{ $G$}} is said to have an \emph{exponential function} if
 for each \mbox{\mbox{ $x \in \fg$}} the initial value problem
 \[
  \gamma (0)=e,\quad \gamma '(t)=T\lambda_{\gamma (t)}(e).x
 \]
 has a solution \mbox{\mbox{ $\gamma_{x}\in C^{\infty} (\R,G)$}} and the
 function
 \[
  \exp_{G}:\fg\to G,\;\;x \mapsto \gamma_x (1)
 \]
 is smooth. Furthermore, if there exists a zero neighbourhood
 \mbox{\mbox{ $W\se \fg$}} such that \mbox{\mbox{
 $\left.\exp_{G}\right|_{W}$}} is a diffeomorphism onto some open
 identity neighbourhood of \mbox{\mbox{ $G$}}, then \mbox{\mbox{ $G$}}
 is said to be \emph{locally exponential}.
\end{definition}

\begin{lemma}
 \label{lem:interchangeOfActionsOnGroupAndAlgebra} If \mbox{\mbox{ $G$}}
 and \mbox{\mbox{ $G'$}} are locally convex Lie groups with exponential
 function, then for each morphism \mbox{\mbox{ $\alpha :G\to G'$}} of
 Lie groups and the induced morphism \mbox{\mbox{
 $d\alpha (e):\fg \to \fg'$}} of Lie algebras, the diagram
 \[
  \begin{CD}
   G @>\alpha >>G'\\
   @AA\exp_{G}A  @AA\exp_{G'}A\\
   \fg@>d\alpha (e)>> \fg'
  \end{CD}
 \]
 commutes.
\end{lemma}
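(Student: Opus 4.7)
The plan is to show, for each fixed $x\in\fg$, that the two smooth curves
\[
 \delta_{1},\delta_{2}\from \R\to G',\quad
 \delta_{1}(t):=\alpha(\exp_{G}(tx)),\quad
 \delta_{2}(t):=\exp_{G'}(t\cdot d\alpha(e).x)
\]
coincide; evaluating at $t=1$ then gives the desired identity
$\alpha \circ\exp_{G}=\exp_{G'}\circ d\alpha(e)$.

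First I would unravel the ODE characterising $\delta_{2}$. By Definition \ref{def:exponentialfunction} applied to $G'$ and the vector $d\alpha(e).x\in\fg'$, the curve $\delta_{2}$ is the unique smooth one-parameter solution of
\[
 \delta_{2}(0)=e',\quad
 \delta_{2}'(t)=T\lambda_{\delta_{2}(t)}(e').(d\alpha(e).x).
\]
Next I would check that $\delta_{1}$ satisfies exactly the same initial value problem. Clearly $\delta_{1}(0)=\alpha(e)=e'$. Writing $\gamma(t):=\exp_{G}(tx)$, the defining property of $\exp_{G}$ gives $\gamma'(t)=T\lambda_{\gamma(t)}(e).x$, whence by the chain rule
\[
 \delta_{1}'(t)=T\alpha(\gamma(t)).T\lambda_{\gamma(t)}(e).x.
\]
The key algebraic input is that $\alpha$ is a group homomorphism, so $\alpha \circ\lambda_{g}=\lambda_{\alpha(g)}\circ \alpha$ for every $g\in G$; differentiating at the identity yields
\[
 T\alpha(g)\circ T\lambda_{g}(e)=T\lambda_{\alpha(g)}(e')\circ d\alpha(e).
\]
Substituting $g=\gamma(t)$ gives $\delta_{1}'(t)=T\lambda_{\delta_{1}(t)}(e').(d\alpha(e).x)$, which is precisely the ODE satisfied by $\delta_{2}$.

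The final step, and the only delicate one in the locally convex setting, is to invoke uniqueness of solutions to this initial value problem. In finite dimensions this is a direct consequence of Picard--Lindel\"of, but for general locally convex Lie groups it is part of the standard package accompanying Definition \ref{def:exponentialfunction}: any smooth curve $\R\to G'$ solving the left-invariant equation with initial value $e'$ is automatically a one-parameter subgroup, and such subgroups are uniquely determined by their tangent vector at $0$ (see, e.g., the corresponding basic results in Neeb's survey cited above). The main obstacle here is thus not a calculation but a conceptual one, namely ensuring that uniqueness is available in the locally convex category; granting it, $\delta_{1}\equiv\delta_{2}$ follows and the lemma is proved by setting $t=1$.
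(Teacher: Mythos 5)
The paper states this lemma in the appendix without giving any proof, so there is nothing to compare against; your argument is the standard one and it is correct. You reduce the claim to the uniqueness of solutions of the left-invariant initial value problem in Definition \ref{def:exponentialfunction}, which is indeed available in the locally convex setting (it follows from the Fundamental Theorem of Calculus for locally convex spaces, exactly the point invoked in Remark \ref{rem:banachLieGroupsAreLocallyExponential} and \cite[Lem.\ II.3.5]{Neeb06Towards-a-Lie-theory-of-locally-convex-groups}; note that uniqueness, unlike existence, needs no completeness hypothesis). The only step you gloss over is that $\delta_{2}(t)=\exp_{G'}(t\cdot d\alpha(e).x)$ equals $\gamma_{d\alpha(e).x}(t)$, i.e.\ the rescaling identity $\gamma_{ty}(1)=\gamma_{y}(t)$; this is itself an instance of the same uniqueness statement applied to $s\mapsto\gamma_{y}(ts)$, so it is harmless, but it is worth making explicit that the definition only provides $\exp_{G'}(y)=\gamma_{y}(1)$.
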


\begin{remark}
 \label{rem:banachLieGroupsAreLocallyExponential} The Fundamental
 Theorem of Calculus for locally convex spaces (cf.\ \cite[Thm.\
 1.5]{Glockner02Infinite-dimensional-Lie-groups-without-completeness-restrictions})
 yields that a locally convex Lie group \mbox{\mbox{ $G$}} can have at
 most one exponential function (cf.\ \cite[Lem.\
 II.3.5]{Neeb06Towards-a-Lie-theory-of-locally-convex-groups}).

 Typical examples of locally exponential Lie groups are Banach-Lie
 groups (by the existence of solutions of differential equations and the
 inverse mapping theorem, cf.\
 \cite{Lang99Fundamentals-of-differential-geometry}) and groups of
 smooth and continuous mappings from compact manifolds into locally
 exponential groups (\cite[Sect.\
 3.2]{Glockner02Lie-group-structures-on-quotient-groups-and-universal-complexifications-for-infinite-dimensional-Lie-groups},
 \cite{Wockel06Smooth-extensions-and-spaces-of-smooth-and-holomorphic-mappings}).
 However, diffeomorphism groups of compact manifolds are never locally
 exponential (cf.\ \cite[Ex.\
 II.5.13]{Neeb06Towards-a-Lie-theory-of-locally-convex-groups}) and
 direct limit Lie groups not always (cf.\ \cite[Rem.\
 4.7]{Glockner05Fundamentals-of-direct-limit-Lie-theory}). For a
 detailed treatment of locally exponential Lie groups and their
 structure theory we refer to \cite[Sect.\
 IV]{Neeb06Towards-a-Lie-theory-of-locally-convex-groups}.
\end{remark}

\begin{remark}\label{rem:mackey-complete-space}
 Let $X$ be a locally convex space. Then $X$ is said to be
 Mackey-complete if each Mackey-Cauchy sequence converges in $X$ (cf.\
 \cite[Sect.\
 I.2]{KrieglMichor97The-Convenient-Setting-of-Global-Analysis}). In
 particular, sequentially complete spaces are Mackey-complete. The main
 reason for working with this weaker concept of completeness is that it
 ensures the existence of (weak) integrals of smooth curves (cf.\
 \cite[Thm.\
 I.2.14]{KrieglMichor97The-Convenient-Setting-of-Global-Analysis}), even
 for non-complete spaces. Moreover, it implies the existence of
 integrals for smooth functions on cubes and standard simplices (cf.\
 \cite[Rem.\
 I.4.4]{Neeb06Towards-a-Lie-theory-of-locally-convex-groups}).
\end{remark}

\def\polhk#1{\setbox0=\hbox{#1}{\ooalign{\hidewidth
  \lower1.5ex\hbox{`}\hidewidth\crcr\unhbox0}}} \def\cprime{$'$}

\end{document}